\newcommand\restr[2]{{
		\left.\kern-\nulldelimiterspace 
		#1 
		\vphantom{\big|} 
		\right|_{#2} 
}}
\theoremstyle{plain}
\numberwithin{equation}{section}
\newtheorem{theorem}{Theorem}[section]
\newtheorem{lemma}[theorem]{Lemma}
\newtheorem{corollary}[theorem]{Corollary}
\newtheorem{proposition}[theorem]{Proposition}
\newtheorem{example}[theorem]{Example}
\def\ba{\begin{eqnarray*}}
	\def\ea{\end{eqnarray*}}
\def\bee{\begin{equation}}
\def\ene{\end{equation}}
\newcommand{\vertiii}[1]{{\left\vert\kern-0.25ex\left\vert\kern-0.25ex\left\vert #1 
		\right\vert\kern-0.25ex\right\vert\kern-0.25ex\right\vert}}
\begin{document}
\title[Nonlocal diffusion equations with dynamical boundary conditions]{Nonlocal diffusion equations with dynamical boundary conditions}
\author{Pablo M. Bern\'a}
\address{Pablo M. Bern\'a
	\\
	Departamento de Matem\'aticas
	\\
	Universidad Aut\'onoma de Madrid
	\\
	28049 Madrid, Spain} \email{pablo.berna@uam.es}

\author{Julio D. Rossi}
\address{Julio D. Rossi
	\\
	Departamento de Matem\'atica, FCEyN \\ Universidad de Buenos Aires\\ Buenos Aires, Argentina}
\email{jrossi@dm.uba.ar}

\subjclass{45G10, 45J05, 47H06}

\keywords{Nonlocal diffusion, $p$-Laplacian, Dynamical boundary conditions, accretive operators}

\begin{abstract}
In this paper we study nonlocal problems that are analogous to the
local ones given by the Laplacian or the $p-$Laplacian with dynamical boundary conditions.
We deal both with smooth and with singular kernels and show existence and uniqueness of solutions
and study their asymptotic behaviour as $t$ goes to infinity. 
\end{abstract}

\maketitle
\section{Introduction}
When one considers evolution problems for the usual Laplacian in a bounded smooth domain 
$\widehat{\Omega}$, among classical boundary conditions,
one finds the so-called dynamical boundary conditions. We refer to the following problem:  
\[
(DBC) \begin{cases}
\displaystyle 0=\Delta u(x,t), & x\in \widehat{\Omega}, \, t>0,\\[10pt]
\displaystyle  \frac{\partial u}{\partial t}(x,t) + \frac{\partial u}{\partial \eta} (x,t)=0, & x\in\partial \widehat{\Omega}, \, t>0,\\[10pt]
\displaystyle  u(x,0)=u_0(x),  & x\in\partial \widehat{\Omega}.
\end{cases} 
\]
Here, $\widehat{\Omega}$ is a bounded smooth domain in $\mathbb{R}^n$ and $\eta$ stands for the exterior unit normal on its boundary.
This is the natural evolution problem associated with the Dirichlet to Neumann map for the Laplacian in the domain
$\widehat{\Omega}$.
Notice that dynamical boundary conditions have been studied by many authors. Among them,
we mention: \cite{11,12,13,15,Latorre,Marcone,19,27}.

In this paper our main goal it to study nonlocal diffusion problems that are
analogous to the usual dynamical boundary conditions $(DBC)$.

\subsection{Linear equations with smooth kernels} First of all, we introduce what kind of nonlocal diffusion problems we will consider
here. To this end, we first take 
a non-singular kernel $J: \mathbb{R}^n \mapsto\mathbb R$, continuous, radially symmetric, decreasing, compactly supported (let $supp (J) = B_R(0)$) and non negative with $\int_{\mathbb{R}^n}J(w)dw = 1$. We also consider a fixed smooth domain $\widehat{\Omega}$ and inside this domain a narrow strip 
$\Gamma_r = \{ x\in \widehat{\Omega} : dist(x,\partial \widehat{\Omega})\leq r \}$, with $r\leq R$. We let
$\Omega_r = \widehat{\Omega} \setminus \Gamma_r$.
Associated with the kernel $J$ and the configuration $(\widehat{\Omega},\Gamma_r)$ we consider the nonlocal evolution problem:
\[
(P)\begin{cases}
\displaystyle 0=\int_{\Omega_r\cup \Gamma_r}J(x-y)\left[u(y,t)-u(x,t)\right]dy, & x\in\Omega_r, \, t>0,\\[10pt]
\displaystyle  \frac{\partial u}{\partial t}(x,t)=\int_{\Omega_r}J(x-y)\left[u(y,t)-u(x,t)\right]dy, & x\in\Gamma_r, \, t>0,\\[10pt]
\displaystyle  u(x,0)=u_0(x),  & x\in\Gamma_r.
\end{cases} 
\]
We will call this problem a nonlocal problem with dynamical boundary conditions.

At this point, let us mention that nonlocal equations of the form 
$$u_t(x,t)=\int J(x-y)\left[u(y,t)-u(x,t)\right]dy,$$
and variations of it, have been recently widely used in the modeling of diffusion processes. 
In a probabilistic interpretation of this model, $u(x,t)$ is thought as the density of a single population at the point $x$ and at time $t$, and $J(x-y)$ is the probability of jumping from location $y$ to location $x$. Then, $\int J(x-y) u(y,t) dy$ is the rate at which individuals are arriving to position $x$ from all other places and $- \int J(x-y) u(x,t) dy$ is the rate at which they are leaving location $x$ to travel to all other sites. Hence the equations says that the change in time of the density of individuals at $x$ at time $t$ is just the balance between arriving to/leaving from $x$ at time $t$ (see \cite{AMRT,CERW1,CERW2}).

Therefore, according to this probabilistic interpretation of the nonlocal terms, we can regard $(P)$ as a model
for the following situation: particles leave or arrive from $x\in \Omega_r$ to $y\in \Omega_r \cup \Gamma_r$
in very fast time scales (that are negligible compared with the slow time scales that occur in $\Gamma_r$)
given rise to an ``elliptic"{} equation inside $\Omega_r$ (notice that $t$ is only a parameter in the first equation that appears in $(P)$). 
On the other hand, individuals arrive to or leave from $x\in \Gamma_r$ from other sites $y\in \Omega_r$ in the slow time scale.
This gives the second equation in $(P)$ in which a time derivative appears.  

Also, notice that, given a function $u$ defined in $\Gamma_r$ in the first equation of $(P)$ we are solving 
an elliptic equation (that is like the local Laplacian) in $\Omega_r$ with exterior condition $u$ in $\Gamma_r$ and then, in the second equation in $(P)$, 
the right
hand side is just the flux from points inside $\Omega_r$ to the point $x\in \Gamma_r$. This is a nonlocal analogous to the Dirichlet to
Neumann map.

Our main result for $(P)$ is the following:

\begin{theorem} \label{teo.main.intro.J.2} {\it
		For every $1\leq p<\infty$, given $u_0\in L^p(\Gamma_r)$,  there exists a unique solution $u(x,t)
		\in \mathcal C([0,\infty); L^p(\Gamma_r))$ to $(P)$. Moreover, $u(x,t)$ preserves the total mass of the initial condition, that is,
		$$\int_{\Gamma_r}u(x,t)dx= \int_{\Gamma_r}u_0(x)dx, \qquad \mbox{for every } t>0.$$
		When the width of the strip $\Gamma_r$ is smaller that the support of $J$,  that is, when $r<R$,
		the asymptotic behaviour of the solutions as $t\to +\infty$ is given by the convergence to the mean value
		of the initial condition:
		for every $u_0\in L^p(\Gamma_r)$, there exists two constants $C(u_0)$ and $C'(\Gamma_r,p,\Omega_r)$
		such that
		$$\int_{{\Gamma_r}}\left\vert u(x,t)-\fint_{{\Gamma_r}}u_0\right\vert^p dx \leq C e^{-C't}.$$
	}
\end{theorem}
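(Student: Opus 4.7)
The plan is to recast $(P)$ as an abstract Cauchy problem $u_t + Au = 0$ in $L^p(\Gamma_r)$, invoke the theory of $m$-accretive operators for existence and uniqueness, and then extract the conservation and decay properties from energy-type identities based on the symmetry $J(x-y) = J(y-x)$.

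First, I would build a nonlocal ``harmonic extension'' operator $S\colon L^p(\Gamma_r) \to L^p(\widehat{\Omega})$ by solving the first equation of $(P)$: given $u\in L^p(\Gamma_r)$, find the unique $W = Su$ with $W|_{\Gamma_r} = u$. Rewriting the equation on $\Omega_r$ as $(I - T)W = f_u$ for a compact integral operator $T$ (compactness follows from the continuity of $J$ via Arzel\`a-Ascoli), and noting that the only nonlocal-harmonic function on $\Omega_r$ vanishing on $\Gamma_r$ is zero (by a nonlocal maximum principle), the Fredholm alternative delivers $S$. Setting
\[
Au(x) \;:=\; -\int_{\Omega_r} J(x-y)\bigl[Su(y) - u(x)\bigr]\,dy, \qquad x\in \Gamma_r,
\]
reduces $(P)$ to the abstract Cauchy problem $u' + Au = 0$, $u(0) = u_0$.

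Next I would verify that $A$ is $m$-accretive on $L^p(\Gamma_r)$. Accretivity reduces to $\int_{\Gamma_r}|u|^{p-2}u\cdot Au\,dx \geq 0$, which I would prove by multiplying $Au$ by $|u|^{p-2}u$ and adding the vanishing quantity obtained by pairing the first equation of $(P)$ with $|W|^{p-2}W$ on $\Omega_r$; the symmetry of $J$ then rewrites the sum as a non-negative double integral on $\widehat{\Omega}^2 \setminus \Gamma_r^2$. The range condition $R(I+\lambda A) = L^p(\Gamma_r)$ follows from solving $u+\lambda Au = g$ as the unique minimizer of a strictly convex, coercive functional on $L^p(\widehat{\Omega})$ via the direct method. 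Crandall--Liggett then produces the unique mild solution $u \in C([0,\infty); L^p(\Gamma_r))$. Mass conservation is immediate: integrating the second equation of $(P)$ over $\Gamma_r$, adding the first equation integrated over $\Omega_r$ (which is zero), and using the symmetry of $J$ collapses the three resulting double integrals to zero.

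For the exponential decay, by linearity I may assume $\bar u_0 = \fint_{\Gamma_r} u_0 = 0$. The same symmetry trick gives the $L^2$-energy identity
\[
\frac{d}{dt}\int_{\Gamma_r} u^2\,dx \;=\; -\int_{\widehat{\Omega}^2 \setminus \Gamma_r^2} J(x-y)\bigl[W(y,t) - W(x,t)\bigr]^2\,dy\,dx, \qquad W = Su,
\]
so exponential $L^2$-decay reduces to the nonlocal Poincar\'e-type inequality
\[
\int_{\widehat{\Omega}^2 \setminus \Gamma_r^2} J(x-y)\bigl[W(y) - W(x)\bigr]^2\,dy\,dx \;\geq\; C'\int_{\Gamma_r} u^2\,dx, \qquad \int_{\Gamma_r} u\,dx = 0.
\]
I would establish this by writing $A = M - K$ with $M$ the multiplication operator by $\alpha(x) = \int_{\Omega_r} J(x-y)\,dy$ and $K$ compact on $L^2(\Gamma_r)$; when $r<R$, $\alpha$ is bounded below by a positive constant on $\overline{\Gamma_r}$, so $\sigma_{\mathrm{ess}}(A) = \alpha(\overline{\Gamma_r})$ avoids the origin, while $\ker A$ reduces to constants thanks to the connectivity that the support of $J$ provides when $r<R$. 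An isolated spectral gap then yields the inequality, and Gr\"onwall delivers $L^2$-exponential decay. For general $1\leq p<\infty$ I would reproduce the same argument with the $p$-energy $|u|^{p-2}u$ and its associated $p$-Poincar\'e inequality, or alternatively combine the $L^2$ decay with the $L^\infty$-contractivity of the semigroup by interpolation. The principal obstacle is precisely this nonlocal Poincar\'e inequality, which encodes quantitatively the geometric hypothesis $r<R$: each point of the strip $\Gamma_r$ must ``see'' a positive portion of $\Omega_r$ through $J$ in order to prevent a spectral degeneracy of $A$ at the origin.
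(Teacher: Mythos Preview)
Your proposal is correct in spirit and would yield the theorem, but it follows a genuinely different route from the paper, and one step is under-specified.

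\textbf{Existence and uniqueness.} The paper does \emph{not} use semigroup theory here. It first proves the analogue of your extension map $S$ (their Lemma~2.2, called $T$) via the direct method on a quadratic energy, together with an $L^p$ bound $\|T(g)\|_{L^p(\Omega_r)}\le \mathcal M\|g\|_{L^p(\Gamma_r)}$. It then writes the second equation in integral form and shows the resulting map is a strict contraction on $C([0,t_0];L^p(\Gamma_r))$ for small $t_0$, invoking Banach's fixed point theorem and iterating in time. Your $m$-accretive/Crandall--Liggett approach is perfectly valid---indeed it is exactly what the paper does in Section~3 for the nonlinear $p$-Laplacian version---but for the linear problem it is heavier machinery than needed. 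The paper's fixed-point argument is more elementary; your approach has the virtue of unifying the linear and nonlinear cases.

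\textbf{Mass conservation.} Identical to the paper.

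\textbf{Poincar\'e inequality and decay.} Here too the strategies diverge. The paper proves the inequality
\[
\beta=\inf_{\int_{\Gamma_r}u=0}\ \frac{\iint_H J(x-y)|u(y)-u(x)|^p\,dy\,dx}{\int_{\Gamma_r}|u|^p\,dx}>0
\]
directly for every $1\le p<\infty$ by a contradiction/compactness argument: a normalized minimizing sequence is shown (via the known nonlocal Poincar\'e on $\Omega_r$) to converge to a constant $A$ on $\Omega_r$, then the cross term $\Gamma_r\times\Omega_r$ together with the key lower bound $\int_{\Omega_r}J(x-y)\,dy\ge c>0$ for $x\in\Gamma_r$ (this is where $r<R$ enters) forces $u_n\to A$ on $\Gamma_r$, contradicting mean zero and unit norm. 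Your spectral-gap argument via $A=M-K$ is a clean alternative for $p=2$, and your identification of the essential spectrum with $\alpha(\overline{\Gamma_r})$ correctly isolates the role of $r<R$. However, that argument is intrinsically $L^2$: it does not by itself deliver the $p$-Poincar\'e inequality you invoke for $p\neq 2$, and your interpolation fallback requires $u_0\in L^\infty$ (or at least $u_0\in L^2\cap L^p$), which is stronger than the hypothesis $u_0\in L^p(\Gamma_r)$ when $p<2$. The paper's compactness proof handles all $p$ uniformly, so if you pursue your route you should either adapt the compactness argument for the $p$-version or note the extra integrability needed for interpolation.
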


If we modify the second equation in problem $(P)$ considering as the second equation
$$\frac{\partial u}{\partial t} (x,t)=\int_{\Omega_r \cup \Gamma_r}J(x-y)(u(y,t)-u(x,t))dy,$$
we obtain
\[
(P^*)\begin{cases}
\displaystyle 0=\int_{\Omega_r\cup {\Gamma_r}}J(x-y)\left[u(y,t)-u(x,t)\right]dy, & x\in\Omega_r, \, t>0,\\[10pt] 
\displaystyle \frac{\partial u}{\partial t}(x,t)=\int_{\Omega_r\cup{\Gamma_r}}J(x-y)\left[u(y,t)-u(x,t)\right]dy, & x\in{\Gamma_r}
, \, t>0,\\[10pt] 
u(x,0)=u_0(x),  & x\in{\Gamma_r}.
\end{cases} 
\]

Here, as before, particles leave or arrive from $x\in \Omega_r$ to $y\in \Omega_r \cup \Gamma_r$
in very fast time scales, but individuals arrive to or leave from $x\in \Gamma_r$ from other sites $y\in \Omega_r\cup \Gamma_r$ 
in the slow time scale. Hence, the difference with the previous model is that now particles may jump from $\Gamma_r$ to $\Gamma_r$.

For this problem, we also have the same properties than for the problem $(P)$, the only difference being that
the asymptotic behaviour holds regardless the size of $\Gamma_r$. 

\begin{theorem} \label{teo.main.intro.J.2.99} {\it
		For every $1\leq p<\infty$, given $u_0\in L^p(\Gamma_r)$,  there exists a unique solution $u(x,t)
		\in  \mathcal C([0,\infty); L^p(\Gamma_r))$ of $(P^*)$. Moreover, $u(x,t)$ preserves the total mass of the initial condition and there exist two constants  $C=C(u_0)$ and $C'=C'(\Gamma_r,p,\Omega_r)$ such that the asymptotic behaviour of the solutions as $t\to +\infty$ is given by 
		$$\int_{{\Gamma_r}}\left\vert u(x,t)-\fint_{{\Gamma_r}}u_0\right\vert^p dx \leq C e^{-C't},$$
		for every $u_0\in L^p(\Gamma_r)$.
	}
\end{theorem}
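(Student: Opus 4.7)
The plan is to follow the same three-step strategy already used for Theorem \ref{teo.main.intro.J.2}: first reformulate $(P^*)$ as an abstract ODE in $L^p(\Gamma_r)$, next verify mass preservation by a symmetry argument on $J$, and finally derive exponential decay from a nonlocal Poincar\'e-type inequality. The only essential difference with the proof of the previous theorem lies in the last step, where the enlargement of the integration domain in the second equation of $(P^*)$ from $\Omega_r$ to $\Omega_r\cup\Gamma_r$ allows one to bypass the restriction $r<R$.

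For existence and uniqueness I would first invoke solvability of the ``elliptic'' equation
$$0=\int_{\Omega_r\cup\Gamma_r}J(x-y)\bigl[u(y,t)-u(x,t)\bigr]\,dy,\qquad x\in\Omega_r,$$
to define a bounded linear extension operator $T:L^p(\Gamma_r)\to L^p(\Omega_r\cup\Gamma_r)$ that assigns to each datum $\varphi$ on $\Gamma_r$ its unique extension to $\Omega_r$. Then $(P^*)$ becomes the abstract linear Cauchy problem $u'(t)=Au(t)$ in $L^p(\Gamma_r)$ with
$$A\varphi(x)=\int_{\Omega_r\cup\Gamma_r}J(x-y)\bigl[T[\varphi](y)-\varphi(x)\bigr]\,dy,\qquad x\in\Gamma_r,$$
which is bounded and linear, so standard $C_0$-semigroup theory (or a direct Banach fixed point argument, as should be done for $(P)$) yields existence, uniqueness and the continuity in $t$. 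Mass preservation follows by integrating the second equation of $(P^*)$ in $\Gamma_r$, adding the integral of the first equation over $\Omega_r$, and exploiting $J(x-y)=J(y-x)$ in the resulting double integral over $(\Omega_r\cup\Gamma_r)^2$.

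The main step is the asymptotic behaviour. Setting $\bar u=\fint_{\Gamma_r}u_0$ and $V(x,t)=T[u(\cdot,t)](x)-\bar u$ (which has zero mean on $\Gamma_r$ for all $t$ by mass conservation), testing the second equation of $(P^*)$ against $V|_{\Gamma_r}$ and combining with the elliptic identity multiplied by $V$ and integrated on $\Omega_r$, a symmetrization in $(x,y)$ produces
$$\frac{d}{dt}\int_{\Gamma_r}\frac{V(x,t)^2}{2}\,dx=-\frac12\int_{\Omega_r\cup\Gamma_r}\int_{\Omega_r\cup\Gamma_r}J(x-y)\bigl(V(y,t)-V(x,t)\bigr)^2\,dy\,dx.$$
I expect the hard part to be the following Poincar\'e-type inequality: there exists $C>0$ such that for every $V\in L^2(\Omega_r\cup\Gamma_r)$ with $\int_{\Gamma_r}V=0$,
$$\int_{\Gamma_r}V^2\,dx\le C\int_{\Omega_r\cup\Gamma_r}\int_{\Omega_r\cup\Gamma_r}J(x-y)\bigl(V(y)-V(x)\bigr)^2\,dy\,dx.$$
Because in $(P^*)$ the integration in the second equation ranges over the \emph{full} set $\Omega_r\cup\Gamma_r$, one may prove this inequality by a standard compactness and contradiction argument on the whole $\widehat\Omega$ regardless of the size of $r$, which is precisely why the restriction $r<R$ can be dropped here. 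Once the Poincar\'e inequality is in hand, Gr\"onwall gives exponential $L^2$-decay of $V|_{\Gamma_r}$. To pass from $L^2$ to a general exponent $p\in[1,\infty)$, I would combine the $L^\infty$ contractivity of the semigroup (which follows from a nonlocal maximum principle, exactly as for $(P)$) with the $L^2$ decay above via interpolation, obtaining the desired estimate for all $p$.
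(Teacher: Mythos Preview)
Your plan for existence, uniqueness, mass preservation, and the $L^2$ asymptotic behaviour matches the paper's argument essentially line by line, including the key observation that the Poincar\'e-type infimum
\[
\beta=\inf_{\int_{\Gamma_r}v=0}\frac{\displaystyle\int_{\Omega_r\cup\Gamma_r}\int_{\Omega_r\cup\Gamma_r}J(x-y)|v(y)-v(x)|^p\,dy\,dx}{\displaystyle\int_{\Gamma_r}|v|^p\,dx}
\]
is strictly positive regardless of the width of $\Gamma_r$, because the double integral now runs over the full square $(\Omega_r\cup\Gamma_r)^2$.

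The step that does \emph{not} match, and where your argument has a genuine gap, is the passage from $L^2$ to general $L^p$ via interpolation between $L^2$ and $L^\infty$. Your scheme needs simultaneously an $L^2$ bound and an $L^\infty$ bound on $v(\cdot,t)=u(\cdot,t)-\fint_{\Gamma_r}u_0$, but the hypothesis is only $u_0\in L^p(\Gamma_r)$. For $p>2$ there is no reason for $u_0\in L^\infty(\Gamma_r)$, and these nonlocal equations with a smooth $J$ do not regularise, so $u(\cdot,t)\notin L^\infty$ for all $t$ whenever $u_0\notin L^\infty$; for $1\le p<2$ there is no reason for $u_0\in L^2(\Gamma_r)$, so even the $L^2$ decay you rely on is unavailable. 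A density argument does not repair this: the constant you obtain for an $L^\infty$ approximation blows up with the approximation, and $L^p$-contractivity of the semigroup alone only yields convergence to the mean, not an exponential rate.

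The paper avoids interpolation altogether and works directly in $L^p$. One multiplies the evolution equation by $|v|^{p-2}v$, symmetrises as you did for $p=2$, and uses the elementary inequality
\[
(a-b)\bigl(|a|^{p-2}a-|b|^{p-2}b\bigr)\ge C_p\,|a-b|^p,\qquad a,b\in\mathbb R,
\]
to obtain
\[
\frac{d}{dt}\int_{\Gamma_r}\frac{|v|^p}{p}\,dx\le -\frac{C_p}{2}\int_{\Omega_r\cup\Gamma_r}\int_{\Omega_r\cup\Gamma_r}J(x-y)|v(y)-v(x)|^p\,dy\,dx\le -C_p\,\beta\int_{\Gamma_r}|v|^p\,dx,
\]
where the last step uses the $L^p$ version of the Poincar\'e inequality above (the paper proves $\beta>0$ for every $1\le p<\infty$, not just $p=2$). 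Integrating this differential inequality gives the exponential decay directly for every $p$ and every $u_0\in L^p(\Gamma_r)$, with no interpolation needed.
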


\subsection{Nonlinear equations with smooth kernels} We can also extend our results to cover non linear nonlocal problems of $p-$Laplacian type.
Given $p>1$, we consider the problem 
\begin{equation} \label{p-lapla-intro}
(P2)\begin{cases}
\displaystyle 0=\int_{\Omega_r\cup {\Gamma_r}}J(x-y)\vert u(y,t)-u(x,t)\vert^{p-2}(u(y,t)-u(x,t))dy, & x\in\Omega_r, \, t>0,\\[10pt] 
\displaystyle \frac{\partial u}{\partial t}(x,t)=\int_{\Omega_r}J(x-y)\vert u(y,t)-u(x,t)\vert^{p-2} (u(y,t)-u(x,t)) dy, & x\in{\Gamma_r}, \, t>0,\\[10pt]
u(x,0)=u_0(x),  & x\in{\Gamma_r}.
\end{cases} 
\end{equation}

For references on nonlocal $p-$Laplacian problems we refer to \cite{AMRT,AMRT2,MRT,Vaz} and references therein.

For this problem we prove the following result:

\begin{theorem} \label{teo.main.intro.J.p} {\it
		Given $u_0\in L^p(\Gamma_r)$,  there exists a unique solution $u(x,t)
		\in \mathcal C((0,\infty); L^p(\Gamma_r))\cap W_{loc}^{1,1}((0,\infty); L^p(\Gamma_r))$of $(P2)$. Moreover, $u(x,t)$ preserves the total mass of the initial condition, that is,
		$$\int_{\Gamma_r}u(x,t)dx= \int_{\Gamma_r}u_0(x)dx, \qquad \mbox{for every } t>0.$$
		When the width of the strip $\Gamma_r$ is smaller that the support of $J$,  that is, when $r<R$,
		the asymptotic behaviour of the solutions as $t\to +\infty$ is given by the convergence to the mean value
		of the initial condition:  for every $1< p<\infty$, there exists a constant $C(J, \Gamma_r, p, u_0)$ such that 
			$$\int_{\Gamma_r}\left\vert u(x,t)-\fint_{{\Gamma_r}}u_0\right\vert^p dx \leq \dfrac{C}{t},$$
		and for every $1\leq q<\infty$ and $p>2$,
			$$\int_{\Gamma_r}\left\vert u(x,t)-\fint_{{\Gamma_r}}u_0 \right\vert^q dx \leq \dfrac{C}{t^{\alpha}},$$
			where $\alpha=\displaystyle \frac{p+q-2}{q}-1$ with $C=C(J,\Gamma_r,p,q,u_0)$.
	}
\end{theorem}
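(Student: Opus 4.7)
The plan is to fit problem $(P2)$ into the framework of nonlinear semigroups generated by accretive operators on $L^p(\Gamma_r)$, exactly as suggested by the keyword list of the paper. First, I would reformulate $(P2)$ as an abstract Cauchy problem $u_t + B_p u = 0$, where the operator $B_p$ on $L^p(\Gamma_r)$ is defined by solving the elliptic-type equation in $\Omega_r$ to obtain an extension and then computing the nonlocal flux from $\Omega_r$ back to $\Gamma_r$. Concretely, given $u\in L^p(\Gamma_r)$, let $v$ be the unique minimizer of the strictly convex functional
\[
F(v)=\tfrac{1}{2p}\int_{\Omega_r\cup\Gamma_r}\int_{\Omega_r\cup\Gamma_r} J(x-y)|v(y)-v(x)|^{p}\,dx\,dy
\]
over the affine set $\{v\in L^p(\Omega_r\cup\Gamma_r):v|_{\Gamma_r}=u\}$. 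The Euler-Lagrange equation for $F$ restricted to variations supported in $\Omega_r$ is exactly the first line of $(P2)$, so this produces a well-defined extension operator $E(u):=v$. Then $B_p u(x):=-\int_{\Omega_r}J(x-y)|E(u)(y)-u(x)|^{p-2}(E(u)(y)-u(x))\,dy$ for $x\in\Gamma_r$ is the right-hand side of the evolution equation.

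Next I would show that $B_p$ is the subdifferential (in the $L^2$ sense, and more generally completely accretive in the sense of B\'enilan-Crandall) of the reduced convex functional $\Phi(u):=F(E(u))$ on $L^p(\Gamma_r)$. Convexity of $\Phi$ follows from convexity of $F$ and linearity of the constraint; lower semicontinuity and coercivity modulo constants are routine from the kernel hypothesis. This identifies $B_p$ as an $m$-accretive operator, so the Crandall-Liggett theorem yields a unique mild solution in $\mathcal{C}([0,\infty);L^p(\Gamma_r))$, and standard Brezis-K\={o}mura regularity for subdifferential flows of proper convex l.s.c.\ functionals promotes this to a strong solution in $W^{1,1}_{loc}((0,\infty);L^p(\Gamma_r))$.

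Mass preservation follows from a Fubini/antisymmetry computation. Testing the elliptic equation $0=\int_{\Omega_r\cup\Gamma_r}J(x-y)|v(y)-v(x)|^{p-2}(v(y)-v(x))dy$ against the characteristic function of $\Omega_r$, the $\Omega_r\times\Omega_r$ piece vanishes by antisymmetry of $(x,y)\mapsto |v(y)-v(x)|^{p-2}(v(y)-v(x))$, leaving an identity that says the net flux from $\Omega_r$ into $\Gamma_r$ is zero. Since $v|_{\Gamma_r}=u$, this is precisely $\int_{\Gamma_r}\partial_t u\,dx=0$.

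For the asymptotic behavior, subtracting $\bar u_0:=\fint_{\Gamma_r}u_0$ (preserved by the flow thanks to mass conservation), I would multiply the equation by $|u-\bar u_0|^{p-2}(u-\bar u_0)$ and integrate. After combining with the elliptic equation and using the symmetry of $J$, this yields an energy identity of the form
\[
\tfrac{1}{p}\tfrac{d}{dt}\int_{\Gamma_r}|u-\bar u_0|^{p} \,dx=-\tfrac{1}{2}\int_{\Omega_r\cup\Gamma_r}\int_{\Omega_r\cup\Gamma_r}J(x-y)|E(u)(y)-E(u)(x)|^{p}\,dy\,dx.
\]
The geometric assumption $r<R$ is then used to establish a nonlocal Poincar\'e-type inequality bounding $\int_{\Gamma_r}|u-\bar u_0|^{p}$ by the double-integral energy on the right. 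The resulting inequality $\tfrac{d}{dt}y(t)\le -C\,y(t)^{\gamma}$ with $\gamma=1$ (after a suitable renormalization exploiting that the dissipation scales like the $p$-th power of the deviation) integrates to $y(t)\le C/t$, giving the $L^p$ decay. The $L^q$ estimate for $p>2$ follows by interpolating this $L^p$ bound against the $L^\infty$ bound coming from the contraction/comparison property of nonlocal $p$-Laplacian flows (as in \cite{AMRT,AMRT2}), producing the exponent $\alpha=(p+q-2)/q-1$. The main obstacle I anticipate is the Poincar\'e inequality: ensuring that the energy controlled by the equation (an integral over $\Gamma_r\times\Omega_r$) still dominates $\|u-\bar u_0\|_{L^p(\Gamma_r)}^p$, which is where the hypothesis $r<R$ becomes essential because it forces every $x\in\Gamma_r$ to "see" points of $\Omega_r$ through the support of $J$, enabling a connectedness argument analogous to the linear case.
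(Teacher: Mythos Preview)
Your semigroup setup for existence and uniqueness, and your mass-conservation argument, match the paper's approach (Lemmas~\ref{lema}--\ref{lem2}, Theorem~\ref{main6}, Proposition~\ref{propmasa.p}) essentially line for line.

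The asymptotic part, however, has genuine gaps. First, the energy identity you wrote is not what testing against $|w|^{p-2}w$ produces: with that test function the right-hand side is
\[
-\tfrac12\iint_H J(x-y)\,|w(y)-w(x)|^{p-2}(w(y)-w(x))\bigl(|w(y)|^{p-2}w(y)-|w(x)|^{p-2}w(x)\bigr)\,dy\,dx,
\]
not $-\tfrac12\iint J\,|w(y)-w(x)|^{p}$. The clean $p$-th-power dissipation appears only when you test against $w$ itself, and then the left side is $\tfrac12\tfrac{d}{dt}\|w\|_{L^2}^2$, not $\tfrac{1}{p}\tfrac{d}{dt}\|w\|_{L^p}^p$. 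Second, even granting your identity, $y'\le -Cy$ (your ``$\gamma=1$'') integrates to exponential decay, not $C/t$; there is no ``renormalization'' that turns it into the stated algebraic rate. The paper obtains $C/t$ by a different mechanism (Theorem~\ref{mainn2}): it shows $t\mapsto\|w(t)\|_{L^p}^p$ is nonincreasing, applies the Poincar\'e inequality of Theorem~\ref{main4} to get $\|w(s)\|_{L^p}^p\le C\iint_H J|u(y,s)-u(x,s)|^p$, integrates in $s$, and then bounds the time-integrated dissipation by $\|u_0\|_{L^2}^2$ using the genuine $L^2$ energy identity. This combination is what produces the factor $t^{-1}$.

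For the $L^q$ estimate with $p>2$, the paper does not interpolate against an $L^\infty$ bound. Instead (Theorem~\ref{mainn}) it tests directly against $|w|^{q-2}w$, uses the pointwise inequality $|a-b|^{p-2}(a-b)\bigl(|a|^{q-2}a-|b|^{q-2}b\bigr)\ge C\,|a^\gamma-b^\gamma|^p$ with $\gamma=(p+q-2)/p$, applies the Poincar\'e inequality to $w^\gamma$, and then H\"older to close the differential inequality $\tfrac{d}{dt}\|w\|_{L^q}^q\le -C\bigl(\|w\|_{L^q}^q\bigr)^{(p+q-2)/q}$. Integrating this ODE (exponent strictly larger than $1$ since $p>2$) yields exactly $t^{-\alpha}$ with $\alpha=(p+q-2)/q-1$. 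Your interpolation route would at best give $\|w\|_{L^q}^q\lesssim t^{-1}$ for $q\ge p$ and $t^{-q/p}$ for $q<p$ via H\"older, neither of which matches the claimed exponent (for instance $q=1$, $p=4$ demands $\alpha=2$).
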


We can also consider the $p-$version of $(P^*)$, that is,
\begin{equation} \label{p-lapla-intro.*}
(P2^*)\begin{cases}
\displaystyle 0=\int_{\Omega_r\cup {\Gamma_r}}J(x-y)\vert u(y,t)-u(x,t)\vert^{p-2}(u(y,t)-u(x,t))dy, & x\in\Omega_r, \, t>0,\\[10pt] 
\displaystyle \frac{\partial u}{\partial t}(x,t)=\int_{\Omega_r \cup \Gamma_r}
J(x-y)\vert u(y,t)-u(x,t)\vert^{p-2} (u(y,t)-u(x,t)) dy, & x\in{\Gamma_r}, \, t>0,\\[10pt]
u(x,0)=u_0(x),  & x\in{\Gamma_r},
\end{cases} 
\end{equation}
and obtain similar results (whose proofs are left to the reader).

\subsection{Singular kernels}

We also include here the case in which the kernel $J$ can be singular. 
For $0<s<1$, we consider 
\begin{equation}\label{P3.intro.sing}
(P3)\begin{cases}
\displaystyle 0=\int_{\Omega_r\cup {\Gamma_r}}\frac{C(s)}{\vert x-y\vert^{n+ps}}\vert u(y,t)-u(x,t)\vert^{p-2}(u(y,t)-u(x,t))dy, & x\in\Omega_r, \, t>0,\\[10pt] 
\displaystyle \frac{\partial u}{\partial t}(x,t)=\int_{\Omega_r}\frac{C(s)}{\vert x-y\vert^{n+ps}}\vert u(y,t)-u(x,t)\vert^{p-2}(u(y,t)-u(x,t))dy, & 
x\in{\Gamma_r}, \, t>0, \\[10pt] 
u(x,0)=u_0(x),  & x\in{\Gamma_r},
\end{cases} 
\end{equation}
for some fixed constant $C(s)$ (the precise value of this constant is irrelevant here and is given in \cite{hich}). 

\begin{theorem} \label{teo.main.intro.sing.p}
{\it 	Given $u_0\in L^2(\Gamma_r)$,  there exists a unique solution $u(x,t) \in \mathbb \mathcal C([0,\infty); L^2(\Gamma_r))\cap W_{loc}^{1,1}((0,\infty); L^2(\Gamma_r))$ of $(P3)$. Moreover, $u(x,t)$ preserves the total mass of the initial condition, that is,
	$$\int_{\Gamma_r}u(x,t)dx= \int_{\Gamma_r}u_0(x)dx, \qquad \mbox{for every } t>0.$$
	The asymptotic behaviour of the solutions as $t\to +\infty$ is given by the convergence to the mean value
	of the initial condition:  if $u_0\in L^\infty(\Gamma_r)$ for $q>p$ or $u_0\in L^2(\Gamma_r)$ when $q=p$, then there exists $C=C(\Gamma_r, s, p,q,u_0)$ such that 
	$$\int_{\Gamma_r}\left\vert u(x,t)-\fint_{{\Gamma_r}}u_0\right\vert^q dx \leq \dfrac{C}{t}.$$}
\end{theorem}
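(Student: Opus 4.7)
The plan is to cast $(P3)$ as a gradient flow on $L^2(\Gamma_r)$ for a convex energy functional and then invoke Brezis' theory of subdifferentials of convex, proper, lower semicontinuous functionals in Hilbert space. Writing $\Omega = \Omega_r \cup \Gamma_r$, the first equation of $(P3)$ is exactly the Euler--Lagrange equation for minimization of
\[
F(w) = \frac{C(s)}{2p}\iint_{\Omega\times\Omega}\frac{|w(y)-w(x)|^p}{|x-y|^{n+ps}}\,dy\,dx
\]
over $\{w \in W^{s,p}(\Omega) : w|_{\Gamma_r}=v\}$. Strict convexity of $F$ and the fractional Poincar\'e inequality produce a unique minimizer $E(v)$, and I would set $\Phi(v)=F(E(v))$ for $v \in L^2(\Gamma_r)$ (with $\Phi(v)=+\infty$ otherwise). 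A computation of the G\^ateaux derivative identifies $\partial\Phi$, single-valued on $D(\Phi)$, with
\[
\partial\Phi(v)(x) = -\int_{\Omega_r}\frac{C(s)}{|x-y|^{n+ps}}|E(v)(y)-v(x)|^{p-2}(E(v)(y)-v(x))\,dy, \qquad x \in \Gamma_r,
\]
i.e.\ with the negative of the right-hand side of the second line of $(P3)$. Brezis' theorem then yields, for every $u_0 \in \overline{D(\Phi)} = L^2(\Gamma_r)$, a unique strong solution $u \in \mathcal{C}([0,\infty); L^2(\Gamma_r)) \cap W^{1,1}_{\mathrm{loc}}((0,\infty); L^2(\Gamma_r))$ of $u_t + \partial\Phi(u)\ni 0$ with $u(0) = u_0$, giving the required existence and uniqueness.

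Mass conservation will come from a symmetrization argument. The function $G(x,y) := \frac{C(s)}{|x-y|^{n+ps}}|w(y)-w(x)|^{p-2}(w(y)-w(x))$ is antisymmetric in $(x,y)$ thanks to the symmetry of $|x-y|$ and the oddness of $r \mapsto |r|^{p-2}r$. Integrating the dynamic equation over $\Gamma_r$ gives $\frac{d}{dt}\int_{\Gamma_r} u\,dx = \iint_{\Gamma_r \times \Omega_r} G$; integrating the elliptic equation over $\Omega_r$ yields $\iint_{\Omega_r \times \Omega}G = 0$, and the diagonal piece $\iint_{\Omega_r \times \Omega_r}G$ vanishes by antisymmetry, so $\iint_{\Omega_r \times \Gamma_r}G = 0$. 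One more application of Fubini and antisymmetry converts this into $\iint_{\Gamma_r \times \Omega_r}G = 0$, proving conservation of mass.

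For the asymptotics I would use mass conservation to reduce to $\bar u_0 := \fint_{\Gamma_r} u_0 = 0$. Since $\Phi\geq 0$ and $\Phi(0)=0$ (constants have zero fractional energy), the standard gradient-flow estimate of Brezis gives
\[
\Phi(u(t)) \leq \frac{\|u_0\|_{L^2(\Gamma_r)}^2}{2t}, \qquad t>0.
\]
The proof then reduces to the fractional Sobolev--Poincar\'e inequality
\[
\int_{\Gamma_r}|v-\bar v|^p\,dx \leq C\,\Phi(v), \qquad \bar v := \fint_{\Gamma_r} v,
\]
which I would deduce from the embedding $W^{s,p}(\Omega) \hookrightarrow L^p(\Omega)$ together with the observation that $E(\cdot)$ preserves the mean on $\Gamma_r$. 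Inserting this into the Brezis decay gives the $q=p$ statement. For $q>p$ with $u_0 \in L^\infty$, I would first establish an $L^\infty$-contraction property of the semigroup (a consequence of truncation-invariance of $F$, i.e.\ complete accretivity of $\partial\Phi$), and then interpolate
\[
\|u(t)-\bar u_0\|_{L^q(\Gamma_r)}^q \leq \|u(t)-\bar u_0\|_{L^\infty(\Gamma_r)}^{q-p}\,\|u(t)-\bar u_0\|_{L^p(\Gamma_r)}^p \leq \frac{C}{t}.
\]

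The main technical obstacle will be establishing the tailored Poincar\'e inequality above: since $\Phi$ is defined through an implicit minimization, bounding it below by a quantity depending only on the restriction of $v$ to $\Gamma_r$ requires a careful analysis of the minimizing extension and its boundary trace, together with the relevant fractional Sobolev embeddings. A secondary point is verifying the $L^\infty$-contraction/complete-accretivity property needed for the $q>p$ interpolation argument.
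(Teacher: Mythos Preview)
Your strategy is close to the paper's, but there is a concrete error in the identification of $\partial\Phi$. With $F$ integrated over all of $\Omega\times\Omega$, the G\^ateaux derivative of $\Phi(v)=F(E(v))$ in a direction $h\in L^2(\Gamma_r)$ works out to
\[
-\int_{\Gamma_r} h(x)\int_{\Omega_r\cup\Gamma_r}\frac{C(s)}{|x-y|^{n+ps}}\,|E(v)(y)-v(x)|^{p-2}\bigl(E(v)(y)-v(x)\bigr)\,dy\,dx,
\]
because the Euler--Lagrange equation only annihilates the contribution from $x\in\Omega_r$; the $\Gamma_r\times\Gamma_r$ interactions survive. Thus $\partial\Phi$ has its $y$-integral over $\Omega_r\cup\Gamma_r$, which is the right-hand side of the starred variant (the singular analogue of $(P2^*)$), not of $(P3)$. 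The fix is to take the energy over $H=[(\Omega_r\cup\Gamma_r)\times(\Omega_r\cup\Gamma_r)]\setminus(\Gamma_r\times\Gamma_r)$: the first equation of $(P3)$ is still the Euler--Lagrange equation for this restricted energy (the omitted $\Gamma_r\times\Gamma_r$ block does not see the interior values), and now the subdifferential is exactly the operator in the second line of $(P3)$.

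With that correction, your argument and the paper's coincide up to language. The paper defines the operator $B_p^s(u)(x)=-\int_{\Omega_r}\ldots$ directly, proves it is completely accretive via the integration-by-parts identity over $H$ (which is precisely your subdifferential computation) and verifies the range condition $L^2(\Gamma_r)\subset R(I+B_p^s)$ by the very same variational minimization you propose, then invokes nonlinear semigroup theory. Your Brezis/subdifferential route is the Hilbert-space specialization of this: convexity of $\Phi_H$ gives accretivity, and surjectivity of $I+\partial\Phi_H$ is the range condition. Mass conservation is argued identically. For the decay the paper does not quote Brezis' regularizing inequality but reproves its content by hand: the Poincar\'e inequality over $H$ (the compactness argument for the smooth kernel carries over, and is in fact easier here since the singular kernel has no support restriction, so no condition on $r$ is needed), monotonicity of $t\mapsto\|u(t)-\bar u_0\|_{L^q}$, and the energy identity $\int_0^t\iint_H\ldots\,ds\le\|u_0\|_{L^2}^2$, followed by the same $L^\infty$ interpolation for $q>p$ that you outline.
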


Closely related to our results for the singular case are references \cite{delpezzo} and \cite{valdinoci}.
In \cite{valdinoci} the authors introduce Neumann boundary conditions
for fractional operators integrating in sets that are similar to ours. In \cite{delpezzo} the authors deal with nonlocal fractional
problems that approximate Steklov eigenvalues. For extra references concerning evolution problems
involving the fractional $p-$Laplacian we quote \cite{Ab,MRT,P,S,Vaz} and references therein.

\medskip

The paper is organized as follows:
in Section \ref{sec-2} we deal with the linear problem for a smooth kernel and prove Theorem
\ref{teo.main.intro.J.2}. The main argument to obtain the existence and uniqueness of solutions
is a fixed point argument. In Section \ref{sec-3}
we deal with the $p-$Laplacian with a smooth kernel.
Here our arguments rely on semigroup theory. In Section \ref{sec-4} we 
consider singular kernels (also using nonlinear semigroup theory). Finally, in Section \ref{app}
(the appendix), we collect the main notions and results concerning accretive operators that we use
to prove some of our main theorems.

\section{Existence, uniqueness and asymptotic behaviour for linear problems with smooth kernels. }
\label{sec-2}

\subsection{Existence and uniqueness.} Our main goal in this section is to prove the following existence and uniqueness 
result:
\begin{theorem}\label{main}
	{\it Given $u_0(x)\in L^p(\Gamma_r)$,  there exists a unique solution $u(x,t)\in \mathcal C((0,\infty); L^p(\Gamma_r))$ of $(P)$.}
\end{theorem}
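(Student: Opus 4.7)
The overall strategy is to first solve the nonlocal elliptic equation that appears as the first line of $(P)$, producing an extension operator $\mathcal{E}:L^{p}(\Gamma_r)\to L^{p}(\Omega_r)$ that, given ``boundary data'' $g$ on $\Gamma_r$, returns the unique $v$ on $\Omega_r$ satisfying the elliptic identity together with $g$. Substituting $\mathcal{E}(u(\cdot,t))$ into the right-hand side of the second line of $(P)$ recasts the dynamical equation on $\Gamma_r$ as a linear ODE $u_t = Fu$ in the Banach space $L^{p}(\Gamma_r)$, which can then be solved by applying Banach's fixed point theorem to the integral equation $u(t) = u_0 + \int_0^{t} F(u(s))\dd s$.

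To build $\mathcal{E}$ I fix $g\in L^{p}(\Gamma_r)$ and set $A(x):=\int_{\Omega_r\cup\Gamma_r}J(x-y)\dd y>0$. The elliptic equation for the unknown $v\in L^{p}(\Omega_r)$ becomes $(I-K)v = Sg$ with
\[
Kv(x) = \int_{\Omega_r}\frac{J(x-y)}{A(x)}v(y)\dd y,\qquad Sg(x)=\int_{\Gamma_r}\frac{J(x-y)}{A(x)}g(y)\dd y.
\]
Since $J$ is bounded with bounded support, $K$ is a compact operator on $L^{p}(\Omega_r)$; by the Fredholm alternative it suffices to check that $\ker(I-K)=\{0\}$. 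If $u=Ku$, a bootstrap against the continuous kernel $J$ places $u\in C(\overline{\Omega_r})$, so one can run a classical maximum-principle argument: picking $x_0$ where $u$ attains its maximum $M$ forces the weight $B(x_0)/A(x_0)$ (with $B(x):=\int_{\Omega_r}J(x-y)\dd y$) to equal $1$, and hence $u\equiv M$ on $B_R(x_0)\cap\Omega_r$; propagating through chains of $J$-overlapping balls in $\overline{\Omega_r}$, one eventually reaches a point within distance $R$ of $\Gamma_r$, where $B<A$ strictly and therefore $M=0$. Consequently $\mathcal{E}:=(I-K)^{-1}S$ is a well-defined bounded linear operator from $L^{p}(\Gamma_r)$ to $L^{p}(\Omega_r)$.

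With $\mathcal{E}$ in hand, the operator
\[
F:L^{p}(\Gamma_r)\to L^{p}(\Gamma_r),\qquad Fg(x):=\int_{\Omega_r}J(x-y)\mathcal{E}(g)(y)\dd y - g(x)\int_{\Omega_r}J(x-y)\dd y,
\]
is linear and bounded, by Young's inequality together with the previous step. Hence $\Phi(u)(t):=u_0+\int_0^{t}F(u(s))\dd s$ is a $T\|F\|$-Lipschitz self-map of $\mathcal{C}([0,T];L^{p}(\Gamma_r))$, and for $T<1/\|F\|$ the Banach fixed point theorem produces a unique local-in-time solution; linearity and time-independence of $F$ then allow the construction to be iterated, yielding a unique global solution in $\mathcal{C}([0,\infty);L^{p}(\Gamma_r))$ (which coincides with $e^{tF}u_0$). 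The main obstacle in this plan is the invertibility of $I-K$: a direct contraction estimate fails, since $B(x)/A(x)=1$ at every $x\in\Omega_r$ farther than $R$ from $\Gamma_r$, so one must combine Fredholm's alternative with the propagation-of-maximum argument sketched above, the latter implicitly using the connectedness of $\widehat{\Omega}$ to link every point of $\Omega_r$ to the near-boundary region where the weight drops strictly below $1$.
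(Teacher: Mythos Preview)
Your overall architecture matches the paper exactly: first build a bounded ``Dirichlet solver'' $T:L^{p}(\Gamma_r)\to L^{p}(\Omega_r)$ for the first equation in $(P)$, then plug it into the second equation and run Banach's fixed point on the Duhamel map in $\mathcal C([0,T];L^{p}(\Gamma_r))$, iterating in time. The only genuine difference is in how you produce $T$ (your $\mathcal E$). The paper proves its Lemma~\ref{lem} variationally: it subtracts off the datum, rewrites the problem as a homogeneous Dirichlet problem with a source $f$, and minimizes the energy $\frac14\iint J(x-y)(v(y)-v(x))^2\,dy\,dx-\int f v$, invoking a nonlocal Poincar\'e inequality from \cite{AMRT} for coercivity; the $L^{p}$ bound on $T$ is then read off from an explicit H\"older/Fubini estimate on $f$. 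You instead recast the equation as $(I-K)v=Sg$ with $K$ compact on $L^{p}(\Omega_r)$ and invoke the Riesz--Schauder alternative, ruling out the kernel via a strong-maximum-principle propagation argument; boundedness of $\mathcal E$ then comes for free from the open mapping theorem.

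Both routes are valid. Your Fredholm/maximum-principle argument is arguably cleaner for the linear $L^{p}$ statement at hand, and it makes the mechanism (loss of mass of the averaging kernel near $\Gamma_r$) very transparent; on the other hand, the paper's variational proof is what generalizes directly to the nonlinear $p$-Laplacian versions treated later (Lemma~\ref{lema} and Lemma~\ref{lemaa}), where no linear Fredholm theory is available. One small point worth tightening in your write-up: the propagation step needs each connected component of $\Omega_r$ to contain a point within distance $R$ of $\Gamma_r$, not merely connectedness of $\widehat\Omega$; this holds because every component of $\Omega_r$ is bounded with boundary contained in $\Gamma_r$, so points of the component sufficiently close to that boundary do the job.
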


Given $x\in\Omega_r$, the first equation in $(P)$ reads as
\begin{equation}\label{p2}
0=\int_{\Omega_r\cup\Gamma_r} J(x-y)u(y,t)dy - \left( \int_{\Omega_r\cup\Gamma_r}J(x-y)dy \right) u(x,t).
\end{equation}

To prove Theorem \ref{main}, we need the following technical lemma in which we show that given a function 
$g$ defined in $\Gamma_r$ we can solve the first equation in $(P)$, \eqref{p2}, inside $\Omega_r$
(with an estimate involving the corresponding $L^p$-norms). 

\begin{lemma}\label{lem}
	Given $g(y)\in L^p(\Gamma_r)$ for $t>0$, there exists a unique $u(x)=T(g)(y,t)\in L^p(\Omega_r)$ solution of \eqref{p2}
	with $u\equiv g$ in $\Gamma_r$. Moreover, for every $g \in L^p(\Gamma_r)$, we have 
	$$\Vert T(g)\Vert_{L^p(\Omega_r)}\leq \mathcal M\Vert g\Vert_{L^p(\Gamma_r)},$$
	where $\mathcal M=\mathcal M(\Omega_r, \Gamma_r, J, p)$.
\end{lemma}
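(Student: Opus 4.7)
The plan is to recast the problem as a Fredholm integral equation of the second kind on $\Omega_r$ and apply the Fredholm alternative. Setting $a(x) = \int_{\Omega_r \cup \Gamma_r} J(x-y)\,dy$ and imposing $u \equiv g$ on $\Gamma_r$, the identity \eqref{p2} becomes
\[
u(x) - \int_{\Omega_r} \frac{J(x-y)}{a(x)}\, u(y)\, dy = \frac{1}{a(x)} \int_{\Gamma_r} J(x-y)\, g(y)\, dy =: f(x), \qquad x \in \Omega_r.
\]
Since $J$ is continuous, non-negative, and strictly positive on a neighbourhood of the origin while $\Omega_r$ is a bounded subset of $\widehat{\Omega}$, the quantity $a(x)$ is bounded below by a positive constant on $\Omega_r$, so the kernel $K(x,y) := J(x-y)/a(x)$ is continuous and bounded on $\overline{\Omega_r} \times \overline{\Omega_r}$. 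Young's inequality then gives $\|f\|_{L^p(\Omega_r)} \le C \|g\|_{L^p(\Gamma_r)}$ with $C$ depending only on $J$, $\Omega_r$, $\Gamma_r$ and $p$.

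Next I would check that the associated integral operator $Ku(x) := \int_{\Omega_r} K(x,y)\,u(y)\,dy$ is compact on $L^p(\Omega_r)$ for every $1 \le p < \infty$. Since $J$ is uniformly continuous on $\mathbb{R}^n$ and $K$ is continuous on a compact set, $K$ maps bounded subsets of $L^p(\Omega_r)$ into equicontinuous and uniformly bounded subsets of $C(\overline{\Omega_r})$, hence into relatively compact subsets of $L^p(\Omega_r)$ by Arzel\`a--Ascoli.

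The main obstacle is showing that $I-K$ is injective, i.e.\ that the homogeneous problem has only the trivial solution. If $u \in L^p(\Omega_r)$ satisfies $u = Ku$, then the smoothing action of $K$ forces $u$ to be bounded, hence in $L^2(\Omega_r)$. Extending by zero to $\tilde u$ on $\Omega_r \cup \Gamma_r$, the equation reads
\[
\int_{\Omega_r \cup \Gamma_r} J(x-y)\bigl[\tilde u(y) - \tilde u(x)\bigr]\, dy = 0, \qquad x \in \Omega_r.
\]
Multiplying by $\tilde u(x)$ and integrating over $\Omega_r \cup \Gamma_r$ (the integrand vanishes on $\Gamma_r$ since $\tilde u \equiv 0$ there) and symmetrising through $J(x-y) = J(y-x)$ yields
\[
\frac{1}{2} \int_{\Omega_r \cup \Gamma_r}\!\!\int_{\Omega_r \cup \Gamma_r} J(x-y)\bigl[\tilde u(y) - \tilde u(x)\bigr]^2\, dy\, dx = 0,
\]
so $\tilde u$ is constant on every $J$-connected component of $\widehat{\Omega}$. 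Since $\widehat{\Omega}$ is a connected smooth domain and $J$ is strictly positive on a neighbourhood of $0$, any two points of $\widehat{\Omega}$ can be linked by a finite chain of jumps, so $\tilde u$ is globally constant; combined with $\tilde u \equiv 0$ on $\Gamma_r$, this forces $\tilde u \equiv 0$ and hence $u \equiv 0$.

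With compactness and injectivity in hand, the Fredholm alternative furnishes a bounded inverse $(I-K)^{-1}$ on $L^p(\Omega_r)$; setting $T(g) := (I-K)^{-1} f$ gives the unique solution of \eqref{p2} matching $g$ on $\Gamma_r$, and the chain of estimates
\[
\|T(g)\|_{L^p(\Omega_r)} \le \bigl\|(I-K)^{-1}\bigr\|_{L^p \to L^p}\, \|f\|_{L^p(\Omega_r)} \le \mathcal{M}\, \|g\|_{L^p(\Gamma_r)}
\]
yields the claimed bound with $\mathcal{M} = \mathcal{M}(\Omega_r, \Gamma_r, J, p)$. The delicate point remains the injectivity step: the bootstrap from $L^p$ to $L^2$ via the smoothing of $K$ is what reduces the non-Hilbertian case to the clean $L^2$ energy identity above, and the $J$-connectedness of $\widehat{\Omega}$ (a consequence of its smoothness and connectedness together with $r \le R$) is what converts the pointwise equality of $\tilde u$ across neighbours into a global identity.
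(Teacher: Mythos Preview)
Your argument is correct, but it follows a genuinely different route from the paper's. The paper subtracts the zero-extension $\hat g$ to obtain a problem $(L)$ for $v=u-\hat g$ with homogeneous exterior data and forcing $f(x)=\int_{\Omega_r\cup\Gamma_r}J(x-y)(\hat g(y)-\hat g(x))\,dy$, and then treats existence/uniqueness \emph{variationally}: the quadratic energy $\frac14\iint J(x-y)(v(y)-v(x))^2$ is convex, coercive on $\{v|_{\Gamma_r}=0\}$ by a nonlocal Poincar\'e inequality from \cite{AMRT}, and bounded below, so it has a unique minimizer in $L^2(\Omega_r)$; the $L^p$ bound is then asserted via $\|u\|_{L^p(\Omega_r)}^p\lesssim\int_{\Omega_r}|f|^p$ together with H\"older/Fubini to control $\int|f|^p$ by $\|g\|_{L^p(\Gamma_r)}^p$. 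By contrast, you recast \eqref{p2} as a second-kind Fredholm equation $u-Ku=f$ directly on $L^p(\Omega_r)$, establish compactness of $K$ from the continuity of the kernel, and prove injectivity by bootstrapping any $L^p$ null solution to $L^\infty\subset L^2$ and then running the same symmetrization/connectedness energy identity that underlies the Poincar\'e inequality the paper invokes.

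What each approach buys: the paper's variational argument is short on the existence side, but the passage from the $L^2$ minimizer to the $L^p$ estimate $\|u\|_{L^p}^p\lesssim\int|f|^p$ is left implicit and really needs an additional ingredient (e.g.\ a maximum-principle or an $L^p$ version of the Poincar\'e/coercivity estimate). Your Fredholm route is more self-contained in $L^p$: once $(I-K)^{-1}$ is bounded, the estimate $\|T(g)\|_{L^p}\le\|(I-K)^{-1}\|\,\|f\|_{L^p}\le\mathcal M\|g\|_{L^p}$ is immediate, at the cost of the extra (but elementary) steps of verifying compactness and the $L^p\to L^\infty$ bootstrap for the injectivity argument.
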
 
\begin{proof}
	Let $u\equiv g$ in $\Gamma_r$, and define the function $v=u-\hat{g}$, where 
	\[
	\hat{g}=\begin{cases}
	g & \text{in }\, \Gamma_r,\\[10pt]
	0& \text{in }\, \Omega_r.
	\end{cases} 
	\]
	Then, our problem in terms of $v$ reads as
	\begin{eqnarray*}
		\int_{\Omega_r\cup\Gamma_r}J(x-y)[v(y)-v(x)]dy &=& \underbrace{ \int_{\Omega_r\cup\Gamma_r}J(x-y)[u(y)-u(x)]dy}_{=0}\\ & & -\underbrace{\int_{\Omega_r\cup\Gamma_r}J(x-y)[\hat{g}(y)-\hat{g}(x)]dy}_{=f(x)}.
	\end{eqnarray*}
	Hence, we reformulate \eqref{p2} as
	\[
	(L)\begin{cases}
	\displaystyle 
	0= \int_{\Omega_r\cup \Gamma_r}J(x-y)\left[v(y)-v(x)\right]dy+f(x)& x\in\Omega_r, \\[10pt]
	v\equiv 0& x\in\Gamma_r.
	\end{cases} 
	\]

	Consider the variational problem associated with the previous equation,
	$$\min_{{v}|_{\Gamma_r} \equiv 0, v\in L^p(\Omega)}\frac{1}{4}\int_{\Omega_r\cup \Gamma_r}\int_{\Omega_r\cup \Gamma_r} J(x-y)\left[v(y)-v(x)\right]^2
	dydx - \int_{ \Omega_r}f(x)v(x)dx.$$
	We know that $$\int_{\Omega_r \cup \Gamma_r}\int_{\Omega_r \cup \Gamma_r} J(x-y)\left[v(y)-v(x)\right]^2
	dydx\approx \Vert v\Vert_{L^2(\Omega_r)}$$ for functions $v$ such that $v|_{\Gamma_r} =0$, see \cite{AMRT}. Hence, the variational problem has a unique minimum in $L^2(\Omega_r)$ since the associated functional is convex, coercive and bounded below.
	This minimum turns out to be the unique solution to $(L)$. Hence, we conclude that \eqref{p2} has a unique solution
	with $u\equiv g$ in $\Gamma_r$.
	
	Now our aim is to show the bound for $T(g)$.
	Recall that we called $v=T(g)-\tilde{g}$.
	Then, we have the following bound for the $L^p(\Omega_r)-$norm of the solution
	\begin{eqnarray*}
		\Vert T(g)\Vert_{L^p(\Omega_r)}^p &=& \Vert u(x)\Vert_{L^p(\Omega_r)}^p \lesssim \int_{\Omega_r}(f(x))^p dx\\
		&=&\int_{\Omega_r}\left\vert \int_{\Omega_r\cup\Gamma_r}J(x-y)(\hat{g}(y)-\hat{g}(x))dy\right\vert^pdx\\
		&\stackrel{\text{H\"{o}lder}}{\lesssim}& \int_{ \Omega_r}\int_{\Omega_r\cup \Gamma_r}J(x-y)\vert \hat{g}(y)\vert^pdydx\\
		&\leq&\int_{ \Omega_r}\int_{\Gamma_r}J(x-y)\vert \hat{g}(y)\vert^pdydx\\
		&\stackrel{\text{Fubbini}}{=}& K(J,p)\int_{\Gamma_r}\left(\int_{ \Omega_r}J(x-y)dx\right) \vert \hat{g}(y)\vert^pdy\\
		&\lesssim& \int_{\Gamma_r}\vert g(y)\vert^p dy.
	\end{eqnarray*}
	
	Thus, $\Vert T(g)\Vert_{L^p(\Omega_r)}\leq \mathcal M\Vert g\Vert_{L^p(\Gamma_r)}$.
\end{proof}

Now, we are ready to proceed with the proof of our existence and uniqueness result using a fixed point argument.

\begin{proof}[Proof of Theorem \ref{main}]
	If $x\in\Gamma_r$, it is easy to verify that integrating the second equation in $(P)$ we obtain
	$$u(x,t)-u_0(x) = \int_{0}^{t}\int_{\Omega_r}J(x-y)T(u)(y,s)dyds-\int_{0}^{t}a(x)u(x,s)ds,$$
	where $T$ is the operator that appears in our previous Lemma \ref{lem} and $a(x)= \int_{ \Omega}J(x-y)dy$. Hence, if we define
	$$H(u)(x,t)=u_0(x)+\int_{0}^{t}\int_{ \Omega_r} J(x-y)T(u)(y,s)dyds-\int_{0}^{t}a(x)u(x,s)ds,$$
	we have to show that $H$ has a unique fixed point in order to obtain a unique solution to $(P)$. 
	If we denote $$T'(u,v)(y,s)=T(u)(y,s)-T(v)(y,s)$$ and $$k(x,t)=u(x,t)-v(x,t),$$
	we get
	$$
	\begin{array}{l}
	\displaystyle	\vertiii{H(u)-H(v)}^p= \sup_{0\leq s\leq t}\Vert H(u)-H(v)\Vert_{L^p(\Gamma_r)}\\[10pt] 
	\displaystyle\quad= \sup_{0\leq s\leq t}\int_{\Gamma_r}\left\vert \int_{0}^t \int_{ \Omega_r} 
	J(x-y)T'(u,v)(y,s)dyds-\int_{0}^{t}a(x)k(x,s)ds\right\vert^pdx\\[10pt]
	\displaystyle\quad
	\leq  2^{p-1} \sup_{0\leq s\leq t}\int_{\Gamma_r}\left(\left\vert \int_{0}^t \int_{ \Omega_r} J(x-y)T'(u,v)(y,s)dyds\right\vert^p+\left\vert\int_{0}^{t}a(x)k(x,s)ds\right\vert^p\right)dx\\[10pt]
	\displaystyle\quad
	=(I)+(II),
	\end{array}
	$$
	where
	$$(I) = 2^{p-1}\sup_{0\leq s\leq t}\int_{\Gamma_r}\left\vert\int_0^t\int_{\Omega_r} J(x-y)T'(u,v)(y,s)dyds\right\vert^pdx,$$
	and
	$$(II)= 2^{p-1}\sup_{0\leq s\leq t}\int_{\Gamma_r}\left\vert\int_0^t a(x)k(x,s)dyds\right\vert^pdx.$$
	
	On the one hand,
	\begin{equation} \label{d1}
	\begin{array}{rl}
	(I)& \displaystyle\stackrel{\text{H\"{o}lder}}{\leq} t^{p/p'}K\sup_{0\leq s\leq t}\int_{\Gamma_r}\int_0^t\int_{ \Omega_r}J(x-y)\vert T'(u,v)(y,s)\vert^pdydsdx\\[10pt]
	& \qquad\displaystyle\leq t^{p/p'}K\sup_{0\leq s\leq t}\int_{0}^{t}\int_{ \Omega_r}\vert T'(u,v)(y,s)\vert^p dy ds\\[10pt] 
	\displaystyle
	&\qquad\displaystyle= t^{p/p'}K\sup_{0\leq s\leq t}\int_{0}^{t}\int_{ \Omega_r}\vert (T(u) - T(v))(y,s)\vert^p dy ds\\[10pt] 
	\displaystyle
	&\qquad\displaystyle \stackrel{\text{Lemma}\,\ref{lem}}{\leq}  t^{p/p'}K\sup_{0\leq s\leq t}\int_{0}^t\int_{\Gamma_r}\vert u-v\vert^pdyds\\[10pt]
	\displaystyle
	&\qquad\displaystyle = t^{p/p'+1}K \vertiii{u-v},
	\end{array}
	\end{equation}
	where $K=(J,\Omega_r,\Gamma_r,p)$. 
	
	On the other hand, 
	\begin{equation} \label{d2}
	\begin{array}{rl}
	(II)&
	\displaystyle\stackrel{\text{H\"{o}lder}}{\leq} Ct^{p/p'}\sup_{0\leq s\leq t}\int_{\Gamma_r}\int_{0}^ta(x)\vert k(x,s)\vert^pdsdx\\[10pt]
	&\qquad\displaystyle \leq Ct^{p/p'}\int_{\Gamma_r}\int_0^t \vert k(x,s)\vert^pdx 
	\\[10pt]
	& \qquad \displaystyle = C t^{p/p'+1}\vertiii{u-v},
	\end{array}
	\end{equation}
	where $C=C(J,\Omega_r,\Gamma_r,p)$. 
	
	Using \eqref{d1} and \eqref{d2}, we conclude that
	$$\vertiii{H(u)-H(v)}^p \leq K(J,\Omega_r,\Gamma_r,p)t^{p/p'+1}\vertiii{u-v}^p.$$
	
	Choosing now $t$ such that $K(J,\Omega_r,\Gamma_r,p)t^{p/p'+1}<1$, we conclude that $H$ is a strict contraction in 
	the Banach space $\mathbb X_t = \mathcal C([0,t]; L^p(\Gamma_r)) $ and the existence and uniqueness of the solution follows from Banach's fixed point theorem in the interval $[0,t]$. Notice that, from 
	$$u(x,t)-u_0(x) = \int_{0}^{t}\int_{\Omega_r}J(x-y)T(u)(y,s)dyds-\int_{0}^{t}a(x)u(x,s)ds,$$
	we deduce that $u(x,t)$ is differentiable with respect to $t$ a.e. and it is a solution (in a pointwise sense in $L^p$) to our original problem $(P)$.

	To extend the solution to $[0,\infty)$, we take as initial datum $u(x,t)\in L^p(\Gamma_r)$ and obtain a solution up to the interval $[t,2t]$ (and therefore we have a solution defined in $[0,2t]$). Hence, if we iterate this procedure we get a solution defined in the whole $[0,\infty)$.
\end{proof}

Now, we prove that $u(x,t)$ preserves the total mass of the initial condition.
\begin{proposition}\label{propmasa}
	For all $t>0$, it holds that
	$$\int_{\Gamma_r}u(x,t)dx= \int_{\Gamma_r}u_0(x)dx.$$
\end{proposition}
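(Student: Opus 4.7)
The plan is to differentiate $\int_{\Gamma_r} u(x,t)\,dx$ with respect to $t$, substitute in the evolution equation from the second line of $(P)$, and show that the resulting double integral vanishes by combining Fubini with the symmetry of $J$ and the elliptic first equation of $(P)$.

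First I would use the fact established in the proof of Theorem \ref{main} that $u(\cdot,t)$ satisfies the second equation of $(P)$ pointwise a.e.\ in $L^p(\Gamma_r)$, so that
\[
\frac{d}{dt}\int_{\Gamma_r} u(x,t)\,dx \;=\; \int_{\Gamma_r}\int_{\Omega_r} J(x-y)\bigl[u(y,t)-u(x,t)\bigr]\,dy\,dx.
\]
By Fubini (the double integrand is integrable because $J$ is bounded, $\Omega_r,\Gamma_r$ are bounded and $u(\cdot,t)\in L^p(\Gamma_r)$ with $T(u)(\cdot,t)\in L^p(\Omega_r)$ by Lemma \ref{lem}), this equals
\[
\int_{\Omega_r}\int_{\Gamma_r} J(x-y)\bigl[u(y,t)-u(x,t)\bigr]\,dx\,dy.
\]

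The key step is to rewrite the inner $\Gamma_r$-integral using the elliptic equation. For each $y\in\Omega_r$, the first equation of $(P)$ (written at the point $y$ and using that $J$ is radial, so $J(y-x)=J(x-y)$) gives
\[
\int_{\Gamma_r} J(x-y)\bigl[u(x,t)-u(y,t)\bigr]\,dx \;+\; \int_{\Omega_r} J(x-y)\bigl[u(x,t)-u(y,t)\bigr]\,dx \;=\; 0,
\]
so that
\[
\int_{\Gamma_r} J(x-y)\bigl[u(y,t)-u(x,t)\bigr]\,dx \;=\; \int_{\Omega_r} J(x-y)\bigl[u(x,t)-u(y,t)\bigr]\,dx.
\]
Integrating this identity in $y$ over $\Omega_r$ converts the mixed $\Omega_r\times\Gamma_r$ integral into an integral over $\Omega_r\times\Omega_r$ whose integrand $J(x-y)[u(x,t)-u(y,t)]$ is antisymmetric under the swap $x\leftrightarrow y$ (using once more that $J$ is radial); hence it is zero.

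Combining these steps yields $\frac{d}{dt}\int_{\Gamma_r}u(x,t)\,dx=0$, and integrating in $t$ from $0$ to $t$ gives the conservation law. The only delicate point — and the step I expect to be the main technical issue — is justifying the interchange of $\frac{d}{dt}$ with $\int_{\Gamma_r}$ and the use of Fubini; here one appeals to the regularity $u\in \mathcal{C}([0,\infty);L^p(\Gamma_r))$ from Theorem \ref{main}, the a.e.\ differentiability in $t$ already noted in its proof, and the $L^p$-bound $\|T(u)(\cdot,t)\|_{L^p(\Omega_r)}\leq \mathcal{M}\|u(\cdot,t)\|_{L^p(\Gamma_r)}$ from Lemma \ref{lem}, which together with the boundedness of $J$ and of $\Omega_r,\Gamma_r$ give the required integrability.
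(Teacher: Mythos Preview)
Your proof is correct and follows essentially the same route as the paper: integrate the evolution equation over $\Gamma_r$, use the elliptic equation integrated over $\Omega_r$ together with Fubini and the radial symmetry of $J$, and conclude via the antisymmetry of $J(x-y)[u(x,t)-u(y,t)]$ on $\Omega_r\times\Omega_r$. The only difference is cosmetic ordering (you apply Fubini first and then the elliptic identity pointwise in $y$, whereas the paper integrates the elliptic equation first and then matches it to the time-derivative term); your added justification of the differentiation under the integral and of Fubini via Lemma~\ref{lem} is a welcome extra that the paper leaves implicit.
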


\begin{proof}
	Integrating the second equation in $(P)$ in $\Gamma_r$ we obtain
	\begin{eqnarray}\label{pr.1}
	\frac{\partial}{\partial t}\int_{\Gamma_r}u(x,t)dx &=& \int_{\Gamma_r}\frac{\partial u}{\partial t}(x,t)dx \\[10pt]
	&=& \int_{\Gamma_r}\int_{\Omega_r}J(x-y)\left[u(y,t)-u(x,t)\right]dydx\nonumber
	\end{eqnarray}
	On the other hand, integrating the first equation in $(P)$ in $\Omega_r$ we have
	\begin{equation}
	\begin{array}{rl}\label{pr.2}
	0 =& \displaystyle\int_{\Omega_r}\int_{\Omega_r\cup\Gamma_r}J(x-y)\left[u(y,t)-u(x,t)\right]dydx\\[10pt]
	=& \displaystyle \int_{\Omega_r}\int_{\Gamma_r}J(x-y)\left[u(y,t)-u(x,t)\right]dydx+\underbrace{\int_{\Omega_r}\int_{\Omega_r}J(x-y)\left[u(y,t)-u(x,t)\right]dydx}_{=0}\\[10pt]
	\stackrel{\text{Fubbini}}{=}&  \displaystyle \int_{\Gamma_r}\int_{ \Omega_r}J(x-y)\left[u(y,t)-u(x,t)\right]dydx.
	\end{array}
	\end{equation}
	
	Hence, from \eqref{pr.1} and \eqref{pr.2}, we conclude that
	$$\frac{\partial}{\partial t}\int_{\Gamma_r}u(x,t)dx=0,$$
	as we wanted to show.
\end{proof}

\subsection{Asymptotic behaviour}

In this section, we study the asymptotic behaviour as $t\longrightarrow\infty$ of the solution of the nonlocal problem $(P)$, that is, 
our goal is to prove that solutions converge to the mean value of the initial condition in $L^p-$norm, 
\begin{equation}\label{asym}
\int_{\Gamma_r} \left\vert u(x,t)-\fint_{{\Gamma_r}}u_0 \right\vert^pdx\longrightarrow 0, \qquad \mbox{ as } t \to +\infty.
\end{equation}

Taking $$v(x,t)=u(x,t)-\fint_{{\Gamma_r}}u_0,$$ 
we have that $v$ a solution to
\[
\begin{cases}
\displaystyle 0=\int_{\Omega_r\cup {\Gamma_r}}J(x-y)(v(y,t)-v(x,t))dy& x\in\Omega_r\\[10pt] 
\displaystyle \frac{\partial v}{\partial t}(x,t)=\int_{\Omega_r}J(x-y)\left[v(y,t)-v(x,t)\right]dy& x\in{\Gamma_r},
\end{cases} 
\]
with initial condition
$$v(x,0)= v_0 (x) =u_0 (x) -\fint_{{\Gamma_r}}u_0.$$ 
Notice that $$\fint_{\Gamma_r} v_0=0.$$

Taking into account that
$$\int_{{\Gamma_r}} v(x,t) v_t (x,t) \, dx = \int_{{\Gamma_r}}\int_{{\Gamma_r}} J(x-y)(v(y,t)-v(x,t))v(x,t)\, dydx,$$
we obtain
$$\frac{\partial}{\partial t}\left(\int_{{\Gamma_r}}\frac{v^2}{2} (x,t) \, dx\right)=-\frac{1}{2}\iint_H J(x-y)(v(y,t)-v(x,t))^2 \, dydx,$$
where $H=\left[({\Gamma_r}\cup\Omega_r)\times({\Gamma_r}\cup\Omega_r)\right]\setminus ({\Gamma_r}\times{\Gamma_r})$. 
In order to continue and show \eqref{asym}, we would like to have that
$$\beta:= \inf_{u\in L^2({\Gamma_r}),\int_{\Gamma_r} u=0}\frac{\displaystyle
	\frac{1}{2}\iint_H J(x-y)\vert u(y)-u(x)\vert^2dydx}{\displaystyle \int_{\Gamma_r} \vert u(x)\vert^2 dx}>0,$$
but this is false if we take ${\Gamma_r}$ with $r=R$ as the following example shows.

\begin{example}
	{\rm Consider a sequence $(f_n)_n$ such that 
	$$\int_{\Gamma_r} f_n^2 =1, \qquad \int_{\Gamma_r} f_n =0,$$ 
	and assume that the positive and negative parts of $f_n$ verify
	$$f_n = f_n^++f_n^-$$ with $$\int_{\Gamma_r} f_n^+=-\int_{\Gamma_r} f_n^-,$$ 
	$supp(f_n^+)\subset B(x_0; 1/n)$, $supp(f_n^-)\subset B(x_1;1/n)$ and $dist(x_0,\partial\Omega)=dist(x_1,\partial\Omega)=R$.
	
	Since $supp(f_n^+)\subset B(x_0; 1/n)$ and $dist(x_0,\partial\Omega)=R$ we have that there
	exists $\delta(1/n)$ such that $\delta (1/n)\to 0$ as $n\to \infty$ with
	$$
	\int_{\Omega_r} J(x-y) dy \leq \delta (1/n), \qquad \mbox{ for every } x\in \Gamma_r \cap B(x_0; 1/n).
	$$
	
	Then,
	$$
	\begin{array}{l}
	\displaystyle  \iint_HJ(x-y)\vert f_n(y)-f_n(x)\vert^2dydx \\[10pt]
	\displaystyle
	\qquad \leq \iint_H J(x-y)\vert f_n^+(y)\vert^2 dydx+ \iint_H J(x-y)\vert f_n^-(y)\vert^2 dydx
	\\[10pt]
	\displaystyle
	\qquad  = \int_{\Omega_r}\int_{B(x_0;1/n)} J(x-y) \vert f_n^+(y)\vert^2 dydx+\int_{\Omega_r}
	\int_{B(x_1;1/n)} J(x-y) \vert f_n^-(y)\vert^2 dydx
	\\[10pt]
	\displaystyle
	\qquad \leq \delta(1/n)\left[\int_{\Gamma_r}\vert f_n^-(y)\vert^2dy+\int_{\Gamma_r}\vert f_n^+(y)\vert^2dy\right].
	\end{array}
	$$
	Hence,
	$$\inf_{u\in L^2({\Gamma_r}),\int_{\Gamma_r} u=0} \frac{\displaystyle \iint_H J(x-y)\vert u(y)-u(x)\vert^2 dydx}{
		\displaystyle \int_{\Gamma_r} u^2}=0.$$}
\end{example}

Therefore, we need to impose that
$$
r<R
$$ 
to obtain the following result.

\begin{theorem}\label{main4}
	For any $1\leq p < \infty$, if $\Gamma_r$ is taken with $r<R$, 
	$$\beta:= \inf_{u\in L^p({{\Gamma_r}}),\int_{{\Gamma_r}} u=0}\dfrac{
		\displaystyle\iint_H J(x-y)\vert u(y)-u(x)\vert^p dydx}{ \displaystyle \int_{{{\Gamma_r}}}u^p(x)dx}>0,$$
	where $H=\left[(\Omega_r\cup{{\Gamma_r}})\times(\Omega_r \cup{{\Gamma_r}})\right]\setminus({{\Gamma_r}}\times{{\Gamma_r}})$.
\end{theorem}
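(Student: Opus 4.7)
The plan is to extend each admissible $u\in L^p(\Gamma_r)$ by zero on $\Omega_r$ (the convention implicit in the preceding example), so that the numerator of the Rayleigh quotient defining $\beta$ collapses to a pure boundary-type integral, and then to bound this expression from below using the strict inequality $r<R$ together with the smoothness of $\widehat{\Omega}$. The mean-zero condition will play no active role in the argument.

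Decompose $H=(\Omega_r\times\Omega_r)\cup(\Omega_r\times\Gamma_r)\cup(\Gamma_r\times\Omega_r)$. With $u\equiv 0$ on $\Omega_r$, the piece $\Omega_r\times\Omega_r$ contributes nothing, on each cross piece $|u(y)-u(x)|^p$ reduces to $|u(y)|^p$ or $|u(x)|^p$, and by radial symmetry of $J$ the two cross pieces give equal contributions after relabelling variables. Hence
\[
\iint_H J(x-y)\,|u(y)-u(x)|^p\,dy\,dx=2\int_{\Gamma_r}|u(y)|^p\,\phi(y)\,dy,\qquad \phi(y):=\int_{\Omega_r}J(x-y)\,dx.
\]
It therefore suffices to prove a uniform lower bound $\phi(y)\ge c_0>0$ on $\overline{\Gamma_r}$, since then $\beta\ge 2c_0>0$.

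To produce such a $c_0$, fix $y\in\overline{\Gamma_r}$ and let $y_0\in\partial\widehat{\Omega}$ be a nearest boundary point, so $d(y):=|y-y_0|\le r$. Smoothness of $\widehat{\Omega}$ yields a positive reach $\rho^*$: for $d(y)<\rho^*$ the inward unit normal $\nu$ at $y_0$ is well defined and $y=y_0+d(y)\nu$. Choose $\epsilon\in(0,R-r)$, which is possible precisely because $r<R$, and set $x_\star:=y_0+(r+\epsilon)\nu$. Then $\mathrm{dist}(x_\star,\partial\widehat{\Omega})=r+\epsilon>r$, so $x_\star\in\Omega_r$, and
\[
|x_\star-y|=r+\epsilon-d(y)\le r+\epsilon<R.
\]
Since $\Omega_r$ is open and $J$ is continuous with $J>0$ on the open ball $B_R(0)$, a small ball around $x_\star$ sits inside $\Omega_r\cap B_R(y)$ and contributes a strictly positive amount to $\phi(y)$. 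Continuity of $\phi$ on $\overline{\Gamma_r}$ and compactness of $\overline{\Gamma_r}$ then deliver $c_0:=\min_{\overline{\Gamma_r}}\phi>0$, which ends the proof.

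The main obstacle is this geometric step: the inward-normal construction requires $y$ to lie within the tubular neighborhood of $\partial\widehat{\Omega}$. When $r<\rho^*$ every point of $\overline{\Gamma_r}$ is automatically covered; for larger $r$, points with $d(y)$ close to $r$ are handled directly since they are already within distance $\epsilon$ of $\Omega_r$, and one completes the argument by a finite cover on which a uniform interior-cone direction is available. The hypothesis $r<R$ is used only to secure an $\epsilon>0$ with $r+\epsilon<R$, so that $x_\star$ lands inside the support of $J$; this is precisely the margin which fails at $r=R$ and underlies the counterexample preceding the statement.
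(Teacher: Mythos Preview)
Your argument rests on extending each competitor $u$ by zero on $\Omega_r$, but this is not the extension the theorem is about. In every application of the result (Theorems~\ref{mainp}, \ref{mainn2}, \ref{mainn}) the function $u$ is a solution of $(P)$ or $(P2)$, so its values on $\Omega_r$ are given by the operator $T$ of Lemma~\ref{lem} (or its nonlinear analogue), i.e.\ the ``harmonic'' extension that \emph{minimizes} the energy $\iint_H J(x-y)|u(y)-u(x)|^p$. The paper's own proof reflects this: it applies the Poincar\'e inequality on $\Omega_r$ and uses a bound on $\int_{\Omega_r}|u_n|^p$, both of which are vacuous under your zero extension. What you have actually bounded from below is
\[
\beta_0:=\inf\frac{\iint_H J(x-y)|u_{\rm zero}(y)-u_{\rm zero}(x)|^p}{\int_{\Gamma_r}|u|^p},
\]
and since the harmonic extension minimizes the numerator, one has $\beta_T\le\beta_0$. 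Your inequality $\beta_0\ge 2c_0$ therefore gives no information on $\beta_T$.

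The clearest sign that something is off is your remark that the mean-zero constraint ``plays no active role''. Take $u\equiv 1$ on $\Gamma_r$: the harmonic extension is $\hat u\equiv 1$ on $\Omega_r$, so the numerator vanishes and the Rayleigh quotient is zero. Thus, without the constraint $\int_{\Gamma_r}u=0$, the quantity $\beta_T$ is exactly zero, whereas your $\beta_0$ remains positive. The mean-zero condition is therefore essential, and an argument that does not use it cannot be proving the intended statement. The paper's compactness/contradiction proof is genuinely needed here: the constraint enters at the very end to force the limiting constant $A$ to be zero, which is what produces the contradiction.
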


\begin{proof}
	Arguing by contradiction, assume that $\beta=0$. Then, there is a sequence $u_n$ such that 
	$$\int_{{\Gamma_r}} u_n^p(x)dx=1, \qquad \int_{{\Gamma_r}} u_n(x)dx=0,$$ and 
	\begin{equation}\label{m1.99}
	\iint_H J(x-y)\vert u_n(y)-u_n(x)\vert^p dydx\longrightarrow 0.
	\end{equation}
	By \eqref{m1.99}, using \cite[Proposition 6.19]{AMRT}, there exists a constant $C=C(J, \Omega_r, p)$ such that
	$$0\longleftarrow \int_{\Omega_r}\int_{\Omega_r} J(x-y)\vert u_n(y)-u_n(x)\vert^p dydx \geq 
	C\int_{\Omega_r}\left\vert u_n(y)-\fint_{\Omega_r}u_n\right\vert^pdy,$$
	and moreover,
	$$\int_{{\Omega_r}} u_n^p(x)dx\leq C.$$
	Then, we can assume, extracting a subsequence if necessary, that $$\fint_{\Omega_r}u_n\longrightarrow A.$$ 
	Hence, we obtain that $u_n(y)\longrightarrow A$ strongly in $L^p(\Omega_r)$.

	Let us prove that
	\begin{equation} \label{uuu}
	\mathbb I:=\int_{{\Gamma_r}}\int_{\Omega_r} J(x-y)\vert A-u_n(x)\vert^pdydx \longrightarrow 0.
	\end{equation}
	Again, from \eqref{m1.99}, we have
	\begin{equation}\label{m2.99}
	\int_{{\Gamma_r}}\int_{\Omega_r} J(x-y)\vert u_n(y)-u_n(x)\vert^pdydx \longrightarrow 0.
	\end{equation}
	Now, taking into account the following bound,
	\begin{equation}\label{m4.99}
	\begin{array}{l}
	\displaystyle \mathbb I\leq 2^{p-1}\left[\int_{{\Gamma_r}}\int_{\Omega_r} J(x-y)\vert A-u_n(y)\vert^pdydx 
	\right. \\[10pt]
	\qquad \qquad \displaystyle \left.  +\int_{{\Gamma_r}}\int_{\Omega_r} J(x-y)\vert u_n(y)-u_n(x)\vert^pdydx\right],
	\end{array}
	\end{equation}
	using $u_n(y,t)\longrightarrow A$ strongly in $L^p(\Omega_r)$ and \eqref{m2.99} in \eqref{m4.99}, we obtain \eqref{uuu} .
	
	For $x\in{{\Gamma_r}}$, since $r<R$ we have 
	$$\int_{\Omega_r} J(x-y)dy\geq C>0.$$
	Then, using this uniform lower bound in \eqref{uuu}, we obtain
	\begin{eqnarray}\label{m5}
	\int_{{\Gamma_r}} \vert A-u_n(x)\vert^p dx\longrightarrow 0.
	\end{eqnarray} 
	
	Using that $u_n(x)\longrightarrow A$ in $L^p({{\Gamma_r}})$ and that $$\int_{{\Gamma_r}} u_n(x)dx =0,$$ we conclude that $A=0$.
	Hence, we have 
	$$\int_{{\Gamma_r}} \vert u_n(x)\vert^p dx = 1$$ and $$\int_{{\Gamma_r}} \vert u_n(x)-A\vert^p dx = \int_{{\Gamma_r}} \vert u_n(x)\vert^pdx\longrightarrow 0.$$ This is the desired contradiction.
\end{proof}

Now we are ready to proceed with the proof of the asymptotic behaviour of the solutions.

\begin{theorem}\label{mainp}
	Let $\beta$ be as in Theorem \ref{main4}. For every $1\leq p<\infty$ and $u_0\in L^p(\Gamma_r)$, it holds that
	$$\int_{{\Gamma_r}}\left\vert u(x,t)-\fint_{{\Gamma_r}}u_0(x)dx\right\vert^p\leq \left\| u_0-\fint_{{\Gamma_r}}u_0 \right\|_{L^p(\Gamma_r)}^pe^{-C'(p,\Gamma_r,\beta)t}.$$
\end{theorem}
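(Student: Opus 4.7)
The plan is to reduce to the mean-zero case and then obtain a differential inequality of the form $\frac{d}{dt}\|v\|_{L^p}^p \le -C\|v\|_{L^p}^p$ via the Poincaré-type estimate of Theorem \ref{main4}, followed by Grönwall.

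First I would set $v(x,t) = u(x,t) - \fint_{\Gamma_r} u_0$ and observe that, since the jump integrals kill constants, $v$ solves the same nonlocal system as $u$. By Proposition \ref{propmasa}, mass conservation gives $\int_{\Gamma_r} v(\cdot,t)\,dx = 0$ for all $t\ge 0$. So it suffices to show exponential decay of $\int_{\Gamma_r} |v|^p\,dx$. Next I would multiply the second equation of $(P)$ (the one satisfied on $\Gamma_r$) by $|v(x,t)|^{p-2}v(x,t)$ and integrate over $\Gamma_r$, obtaining
\[
\frac{d}{dt}\int_{\Gamma_r} \frac{|v|^p}{p}\,dx \;=\; \int_{\Gamma_r}\int_{\Omega_r} J(x-y)[v(y,t)-v(x,t)]\,|v(x,t)|^{p-2}v(x,t)\,dy\,dx.
\]

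The key trick is now to symmetrize. Multiplying the first equation of $(P)$ (which vanishes identically on $\Omega_r$) by $|v(x,t)|^{p-2}v(x,t)$ and integrating over $\Omega_r$ gives zero; adding this to the previous identity extends the outer integration to $(\Omega_r\cup\Gamma_r)\times(\Omega_r\cup\Gamma_r)\setminus(\Gamma_r\times\Gamma_r) = H$ (the pair $(\Omega_r,\Omega_r)$ also contributes, but the standard swap in that region yields zero by antisymmetry). A change of variables $x\leftrightarrow y$, exploiting the symmetry of $J$ and of $H$, then yields the clean identity
\[
\frac{d}{dt}\int_{\Gamma_r} \frac{|v|^p}{p}\,dx = -\frac{1}{2}\iint_H J(x-y)\,[v(y,t)-v(x,t)]\,[|v(y,t)|^{p-2}v(y,t)-|v(x,t)|^{p-2}v(x,t)]\,dy\,dx.
\]
Applying the elementary pointwise inequality $(a-b)(|a|^{p-2}a-|b|^{p-2}b) \ge c_p |a-b|^p$ (with $c_p>0$) and then Theorem \ref{main4} to the integrand yields
\[
\frac{d}{dt}\int_{\Gamma_r} \frac{|v|^p}{p}\,dx \le -\frac{c_p}{2}\iint_H J(x-y)|v(y,t)-v(x,t)|^p\,dy\,dx \le -\frac{c_p\,\beta}{2}\int_{\Gamma_r}|v|^p\,dx,
\]
so Grönwall closes the argument with $C'(p,\Gamma_r,\beta) = \tfrac{p\,c_p\,\beta}{2}$.

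The main obstacle is the pointwise inequality $(a-b)(|a|^{p-2}a-|b|^{p-2}b) \ge c_p|a-b|^p$: it is clean for $p\ge 2$ (with $c_p = 2^{2-p}$), but for $1<p<2$ the sharp bound is $(p-1)|a-b|^2(|a|+|b|)^{p-2}$, which does not directly give an $|a-b|^p$ lower bound. For these $p$ I would instead exploit that $\Gamma_r$ has finite measure: Hölder's inequality reduces the bound to the $L^2$ case already covered above, using the uniform control on $\|v(\cdot,t)\|_{L^p}$ that follows from $L^p$-contractivity of the linear semigroup. The $p=1$ endpoint is delicate because $|v|^{p-2}v = \sgn(v)$ is not smooth; I would justify the calculation by approximating $|s|$ with a smooth convex function and passing to the limit, or by directly replacing $|v|^{p-2}v$ with $\sgn(v)$ and noting that $(a-b)(\sgn(a)-\sgn(b)) = |\sgn(a)-\sgn(b)|\cdot|a-b|$ suffices (after a Hölder reduction to $L^2$) to close the estimate.
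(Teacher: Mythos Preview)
Your approach is essentially identical to the paper's: set $v=u-\fint_{\Gamma_r}u_0$, multiply the evolution equation by $|v|^{p-2}v$, symmetrize over $H$ using the vanishing of the interior equation, invoke the pointwise inequality $(a-b)(|a|^{p-2}a-|b|^{p-2}b)\ge C|a-b|^p$, apply Theorem~\ref{main4}, and integrate the resulting differential inequality. The paper simply asserts that pointwise inequality for all $1\le p<\infty$ and proceeds; you are more scrupulous in noting that it is only clean for $p\ge2$ and in sketching a Hölder/$L^2$ reduction for the remaining range, but the skeleton of the argument is the same.
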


\begin{proof}
First, we deal with the case $p=2$.
Thanks to 
$$\frac{\partial}{\partial t}\left(\int_{{\Gamma_r}}\frac{v^2}{2} (x,t) \, dx\right)=-\frac{1}{2}\iint_H J(x-y)(v(y,t)-v(x,t))^2 \, dydx,$$
and the previous result we obtain
	\begin{eqnarray}
	\frac{\partial}{\partial t}\left(\int_{\Gamma_r}\frac{\vert v(x)\vert^2}{2}dx\right) \leq -C(\Gamma_r,\beta)
	\left(\int_{\Gamma_r}\vert v(x)\vert^2dx \right),
	\end{eqnarray}
	and the proof follows just integrating this inequality.
	
	Now, for $p\neq 2$ we multiply the second equation in $(P)$ by
	$|v|^{p-2}v $ and, with similar computations, we obtain
	\begin{eqnarray}
	\frac{\partial}{\partial t}\left(\int_{\Gamma_r}\frac{\vert v(x)\vert^p}{p}dx\right) =
	- \frac{1}{2}\iint_H J(x-y)(v(y,t)-v(x,t)) ( |v|^{p-2}v (y,t) - |v|^{p-2}v(x,t)) \, dydx.
	\end{eqnarray}
	Now we use that
	for $1\leq p<\infty$ there exists a constant such that
	\begin{eqnarray}\label{ineq.55}
	(a-b)\left(\vert a\vert^{p-2}a -\vert b\vert^{p-2}b\right)\geq C\vert a -b\vert^p,
	\end{eqnarray}
	to obtain 
	\begin{eqnarray}
	\frac{\partial}{\partial t}\left(\int_{\Gamma_r}\frac{\vert v(x)\vert^p}{p}dx\right) =
	- \frac{1}{2}\iint_H J(x-y) ( v(y,t) - v(x,t))^p \, dydx.
	\end{eqnarray}
	Using again our previous result we get 
	\begin{eqnarray}
	\frac{\partial}{\partial t}\left(\int_{\Gamma_r}\frac{\vert v(x)\vert^p}{p}dx\right) \leq -C(\Gamma_r,p,\beta)
	\left(\int_{\Gamma_r}\vert v(x)\vert^p dx \right),
	\end{eqnarray}
	and the proof follows again just integrating this inequality.
\end{proof}

\subsection{A modification of problem $(P)$}

If we modify the second equation in problem $(P)$ considering
$$ \frac{\partial u}{\partial t} (x,t)=\int_{\Omega_r \cup {\Gamma_r}}J(x-y)(u(y,t)-u(x,t))dy,$$
we obtain
\[
(P^*)\begin{cases}
\displaystyle 0=\int_{\Omega_r\cup {{\Gamma_r}}}J(x-y)\left[u(y,t)-u(x,t)\right]dy, & x\in\Omega_r, \, t>0,\\[10pt] 
\displaystyle \frac{\partial u}{\partial t}(x,t)=\int_{\Omega_r\cup{{\Gamma_r}}}J(x-y)\left[u(y,t)-u(x,t)\right]dy, & x\in{{\Gamma_r}}
, \, t>0,\\[10pt] 
u(x,0)=u_0(x),  & x\in{{\Gamma_r}}.
\end{cases} 
\]

This new problem has a unique solution (this can be proved as we did for $(P)$ using a fixed point argument as we show below).

\begin{theorem}\label{main2}
	Given $u_0(x)\in L^p({{\Gamma_r}})$,  there exists a unique solution $u(x,t)\in \mathcal C([0,\infty); L^p(\Gamma_r))$ of ($P^*$).
\end{theorem}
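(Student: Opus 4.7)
The plan is to mirror the fixed point argument used in the proof of Theorem \ref{main}, with only minor adjustments to accommodate the enlarged domain of integration in the second equation of $(P^*)$. The first equation of $(P^*)$ is identical to that of $(P)$, so Lemma \ref{lem} still applies: given $g \in L^p(\Gamma_r)$, there is a unique solution operator $T:L^p(\Gamma_r)\to L^p(\Omega_r)$ of the elliptic equation with $T(g)\equiv g$ on $\Gamma_r$, satisfying $\|T(g)\|_{L^p(\Omega_r)}\leq \mathcal M\|g\|_{L^p(\Gamma_r)}$.

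Integrating the second equation of $(P^*)$ in time for $x\in\Gamma_r$, I would rewrite the problem as the integral equation
$$u(x,t)=u_0(x)+\int_0^t\!\!\int_{\Omega_r}J(x-y)T(u)(y,s)\,dy\,ds+\int_0^t\!\!\int_{\Gamma_r}J(x-y)u(y,s)\,dy\,ds-\int_0^t b(x)u(x,s)\,ds,$$
where $b(x)=\int_{\Omega_r\cup\Gamma_r}J(x-y)\,dy$. The associated fixed point operator
$$H^*(u)(x,t):=u_0(x)+\int_0^t\!\!\int_{\Omega_r}J(x-y)T(u)(y,s)\,dy\,ds+\int_0^t\!\!\int_{\Gamma_r}J(x-y)u(y,s)\,dy\,ds-\int_0^t b(x)u(x,s)\,ds$$
acts on the Banach space $\mathbb X_t=\mathcal C([0,t];L^p(\Gamma_r))$ equipped with the supremum norm $\vertiii{\cdot}$.

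To prove that $H^*$ is a strict contraction for $t$ small enough, I would bound $\vertiii{H^*(u)-H^*(v)}^p$ by splitting into three terms: the $T$-integral term $(I)$, the new integral on $\Gamma_r$ term $(II')$, and the $b(x)$ term $(III)$. Terms $(I)$ and $(III)$ are estimated exactly as in the proof of Theorem \ref{main}, using H\"older's inequality, Fubini, and Lemma \ref{lem}, producing bounds of the form $Ct^{p/p'+1}\vertiii{u-v}^p$. The new term $(II')$ is handled in the same way as $(III)$ since $J$ is bounded and $|\int_0^t\!\int_{\Gamma_r}J(x-y)(u-v)(y,s)\,dy\,ds|^p$ can be estimated by H\"older's inequality and Fubini to give a bound of the form $C(J,\Gamma_r,p)\,t^{p/p'+1}\vertiii{u-v}^p$. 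Combining the three estimates,
$$\vertiii{H^*(u)-H^*(v)}^p \leq \widetilde K(J,\Omega_r,\Gamma_r,p)\,t^{p/p'+1}\,\vertiii{u-v}^p,$$
so $H^*$ is a strict contraction on $\mathbb X_t$ for $t$ sufficiently small. Banach's fixed point theorem yields a unique solution on $[0,t]$, and the standard iteration (restarting from $u(\cdot,t)$ as new initial datum) extends the solution to $[0,\infty)$, with pointwise differentiability in $t$ recovered from the integral equation.

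The argument is essentially routine once the framework of Theorem \ref{main} is in place; the only real technical point is checking that the extra $\Gamma_r$-contribution in $H^*$ behaves like a lower-order perturbation. Since the kernel $J$ is bounded and $\Gamma_r$ has finite measure, this presents no obstacle, and indeed the authors signal that the proof can be obtained by the same scheme, which is why they do not reproduce the details here.
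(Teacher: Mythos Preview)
Your argument is correct and follows essentially the same scheme as the paper's own proof: both integrate the evolution equation, introduce the fixed point map built from the elliptic solution operator $T$ of Lemma \ref{lem} plus the extra $\Gamma_r$-convolution term, and estimate each piece via H\"older and Fubini to obtain a contraction on $\mathcal C([0,t];L^p(\Gamma_r))$ for small $t$, then iterate. One minor inaccuracy in your final paragraph: the paper does in fact write out the details of this proof rather than merely signalling it.
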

\begin{proof}
	If $x\in{{\Gamma_r}}$, 
	\begin{eqnarray*}
		u(x,t)-u_0(x) &=& \int_{0}^{t}\int_{\Omega_r\cup{{\Gamma_r}}}J(x-y)\left[u(y,s)-u(x,s)\right]dyds\\[10pt]
		&=& \int_{0}^t\int_{\Omega_r\cup {{\Gamma_r}}}J(x-y)u(y,s)dyds-\int_{0}^{t}\underbrace{\left(\int_{\Omega_r\cup {{\Gamma_r}}}J(x-y)dy\right)}_{=a(x)}u(x,s)ds.
	\end{eqnarray*}
	
	Hence, if we define
	$$H(u)(x,t)=u_0(x)+\int_{0}^t\int_{\Omega_r\cup {{\Gamma_r}}}J(x-y)u(y,s)dyds-\int_{0}^{t}a(x)u(x,s)ds,$$
	we have to show that $H$ has a unique fixed point. As in Theorem \ref{main}, we will show that $H$ is a contraction. Taking $k(x,s)=u(x,s)-v(x,s)$,
	we obtain, using Lemma 3.10.3 of \cite{BL},
	$$
	\begin{array}{l}
	\displaystyle \vertiii{H(u)-H(v)}^p \displaystyle = \sup_{0\leq t\leq T}\Vert H(u)-H(v)\Vert_{L^p({{\Gamma_r}})}\\[10pt]
	\displaystyle  
	\qquad = \sup_{0\leq t\leq T}\int_{{{\Gamma_r}}}\left\vert \int_{0}^t \int_{ \Omega_r\cup{{\Gamma_r}}} J(x-y)\left[u(y,s)-v(y,s)\right]dyds-\int_{0}^{t}a(x)k(x,s)ds\right\vert^pdx \\[10pt]
	\displaystyle 
	\qquad \leq  2^{p-1} \sup_{0\leq t\leq T}\int_{{{\Gamma_r}}}\left\vert \int_{0}^t \int_{ \Omega_r\cup{{\Gamma_r}}} J(x-y)\left[u(y,s)-v(y,s)\right]dyds\right\vert^pdx
	\\[10pt]
	\displaystyle 
	\qquad \qquad +2^{p-1} \sup_{0\leq t\leq T}\int_{{{\Gamma_r}}}\left\vert\int_{0}^{t}a(x)k(x,s)ds\right\vert^p dx.
	\end{array}
	$$
	
	On the one hand,
	$$
	\begin{array}{l}
	\displaystyle	2^{p-1} \sup_{0\leq t\leq T}\int_{{{\Gamma_r}}}\left\vert\int_{0}^{t}a(x)k(x,s)ds\right\vert^p dx \\[10pt]
	\displaystyle \qquad	\leq CT^{p/p'}\sup_{0\leq t\leq T}\int_{{\Gamma_r}}\int_0^t\vert u(x,s)-v(x,s)\vert^pdsdx\\[10pt]
	\qquad \displaystyle \leq CT^{p/p'+1}\vertiii{u-v},
	\end{array}
	$$
	where $C=C(J, p, \Gamma, \Omega_r)$. 
	
	On the other hand, 
	\begin{eqnarray*}
		2^{p-1} \sup_{0\leq t\leq T}\int_{{{\Gamma_r}}}\left\vert \int_{0}^t \int_{ \Omega_r\cup{{\Gamma_r}}} J(x-y)\left[u(y,s)-v(y,s)\right]dyds\right\vert^pdx\leq (I+II),
	\end{eqnarray*}
	where $$I=2^{2p-2} \sup_{0\leq t\leq T}\int_{{\Gamma_r}}\left\vert \int_{0}^t \int_{{{\Gamma_r}}} J(x-y)\left[u(y,s)-v(y,s)\right]dyds\right\vert^pdx,$$
	and
	$$II=2^{2p-2} \sup_{0\leq t\leq T}\int_{{\Gamma_r}}\left\vert \int_{0}^t \int_{\Omega_r} J(x-y)\left[T(u)(y,s)-T(v)(y,s)\right]dyds\right\vert^pdx,$$
	with $T$ as in Lemma \ref{lem}.
	
	First we obtain the following bound for $I$,
	\begin{eqnarray*}
		I &\leq& 2^{2p-2} \sup_{0\leq t\leq T}CT^{p/p'}\int_{{\Gamma_r}}\int_{0}^{t}\int_{{\Gamma_r}} J(x-y)\vert u(y,s)-v(y,s)\vert^pdydsdx\\[10pt]
		&=&2^{2p-2}CT^{p/p'}\sup_{0\leq t\leq T}\int_{0}^{t}\int_{{\Gamma_r}}\int_{{\Gamma_r}} J(x-y)\vert u(y,s)-v(y,s)\vert^p dx dyds\\[10pt]
		&\leq& CT^{p/p'}\sup_{0\leq t\leq T}\int_{0}^{t}\int_{{\Gamma_r}} \vert u(y,s)-v(y,s)\vert^p\\[10pt]
		&\leq& CT^{p/p'+1}\vertiii{u-v},
	\end{eqnarray*}
where $C=C(J,\Omega_r,\Gamma_r,p)$. To obtain a bound for $ II$, using $T$ as in Lemma \ref{main}, we argue as follows,
	\begin{eqnarray*}
		II &\leq& 2^{2p-2}T^{p/p'} \sup_{0\leq t\leq T}\int_{{\Gamma_r}} \int_{0}^{t}\int_{\Omega_r} 
		J(x-y)\vert T(u)(y,s)-T(v)(y,s)\vert^p dy ds dx\\[10pt]
		&\leq& CT^{p/p'}\int_{0}^t \int_{\Omega_r}\vert T(u)(y,s)-T(v)(y,s)\vert^pdyds\\[10pt]
		&\stackrel{\text{Lemma}\,\ref{lem}}{\leq} & CT^{p/p'}\mathcal M\int_{0}^{t}\int_{{\Gamma_r}} \vert u(x,s)-v(x,s)\vert^pdxds\\[10pt]
		&\leq& T^{p/p'+1}C\vertiii{u-v},
	\end{eqnarray*}
	where $C=C(J,p,\Omega_r,\Gamma_r)$. 
	
	Then, there exists  $K=K(J,p,\Omega_r, \Gamma_r)$  such that $$\vertiii{H(u)-H(v)}\leq K T^{p/p'+1}\vertiii{u-v}.$$ 
	Choosing now $t$ such that $Kt^{p/p'+1}<1$, we conclude that $H$ is a strict contraction in $\mathbb X_t = \mathcal C([0,t]; L^p(\Gamma_r))$ and the existence and uniqueness of the solution follows from Banach's fixed point theorem in the interval $[0,t]$. To extend the solution to $[0,\infty)$, we take a solution data $u(x,t)\in L^p({{\Gamma_r}})$ and obtain a solution up to the interval $[0,2t]$. Hence, if we iterate this procedure we get a solution defined in $[0,\infty)$.	
\end{proof}

Now, we observe that Lemma \ref{lem} is also valid and we also have the conservation of the total mass in this case.

\begin{proposition}
	For every $t>0$,
	$$\int_{{{\Gamma_r}}}u(x,t) dx =\int_{{\Gamma_r}} u_0(x)dx.$$
\end{proposition}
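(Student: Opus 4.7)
The plan is to mirror the argument of Proposition \ref{propmasa}, exploiting the radial symmetry $J(x-y)=J(y-x)$ so that double integrals of $J(x-y)[u(y,t)-u(x,t)]$ over symmetric product domains vanish by antisymmetry.

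First I would differentiate under the integral and use the second equation of $(P^*)$ to write
\[
\frac{\partial}{\partial t}\int_{\Gamma_r}u(x,t)\,dx=\int_{\Gamma_r}\!\int_{\Omega_r\cup\Gamma_r}J(x-y)\bigl[u(y,t)-u(x,t)\bigr]\,dy\,dx.
\]
I would then split the inner integral as $\int_{\Gamma_r}+\int_{\Omega_r}$. The piece over $\Gamma_r\times\Gamma_r$ vanishes: by Fubini and the change of variables $x\leftrightarrow y$ combined with $J(x-y)=J(y-x)$, the integrand is antisymmetric, so
\[
\int_{\Gamma_r}\!\int_{\Gamma_r}J(x-y)\bigl[u(y,t)-u(x,t)\bigr]\,dy\,dx=0.
\]

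It then remains to show that the cross term $\int_{\Gamma_r}\!\int_{\Omega_r}J(x-y)[u(y,t)-u(x,t)]\,dy\,dx$ also vanishes. For this I would turn to the first equation of $(P^*)$, which holds pointwise for $x\in\Omega_r$, integrate it over $\Omega_r$, and again split the inner integration. The $\Omega_r\times\Omega_r$ contribution vanishes by the same antisymmetry argument, leaving
\[
0=\int_{\Omega_r}\!\int_{\Gamma_r}J(x-y)\bigl[u(y,t)-u(x,t)\bigr]\,dy\,dx.
\]
Applying Fubini, renaming the variables, and using $J(x-y)=J(y-x)$ converts this into $-\int_{\Gamma_r}\!\int_{\Omega_r}J(x-y)[u(y,t)-u(x,t)]\,dy\,dx=0$, which is exactly the cross term we needed to annihilate.

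Combining the two cancellations yields $\frac{d}{dt}\int_{\Gamma_r}u(x,t)\,dx=0$, and integrating in $t$ gives the conservation of mass. I do not foresee a real obstacle: the only subtle step is justifying the differentiation under the integral sign, but that is immediate since the solution produced by Theorem \ref{main2} satisfies the integral identity $u(x,t)-u_0(x)=\int_0^t\int_{\Omega_r\cup\Gamma_r}J(x-y)[u(y,s)-u(x,s)]\,dy\,ds$, and one may integrate this identity in $x\in\Gamma_r$, apply Fubini, and then carry out the same symmetry cancellations directly at the level of the time-integrated equation.
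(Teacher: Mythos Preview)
Your proposal is correct and follows essentially the same approach as the paper: differentiate under the integral, split into the $\Gamma_r\times\Gamma_r$ piece (zero by antisymmetry) and the $\Gamma_r\times\Omega_r$ cross term, and kill the latter by integrating the first equation of $(P^*)$ over $\Omega_r$. The only cosmetic difference is that the paper dispatches the cross term by citing Proposition~\ref{propmasa} (whose proof is exactly the computation you wrote out), whereas you reproduce that computation explicitly.
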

\begin{proof}
	\begin{eqnarray*}
		\frac{\partial}{\partial t}\int_{{\Gamma_r}} u(x,t)dx &=& \int_{{\Gamma_r}}  \frac{\partial u}{\partial t}(x,t)dx = \int_{{\Gamma_r}}\int_{\Omega_r \cup {{\Gamma_r}}}J(x-y)\left[u(y,t)-u(x,t)\right]dydx\\
		&=& \underbrace{\int_{{\Gamma_r}}\int_{{\Gamma_r}} J(x-y)\left[u(y,t)-u(x,t)\right]dydx}_{=0}\\
		& & + \underbrace{\int_{{\Gamma_r}}\int_{\Omega_r} J(x-y)\left[u(y,t)-u(x,t)\right]dydx}_{=0\, \text{by Proposition \ref{propmasa}}}.
	\end{eqnarray*}
	Hence, $$\frac{\partial}{\partial t}\int_{{\Gamma_r}} u(x,t)dx=0.$$
\end{proof}

Finally, the asymptotic behaviour follows from our previous computations for $(P)$ since
we can obtain that 
$$\beta:= \inf_{u\in L^p({\Gamma_r}),\int_{\Gamma_r} u=0}\frac{\displaystyle
	\frac{1}{2}\int_{\Omega_r \cup {\Gamma_r}}\int_{\Omega_r \cup {\Gamma_r}} 
	J(x-y)\vert u(y)-u(x)\vert^p dydx}{\displaystyle \int_{\Gamma_r} \vert u(x)\vert^p dx}>0,$$
regardless the size of ${\Gamma_r}$ and the support of $J$. The details are left to the reader.

\section{Dynamical boundary conditions for the nonlocal $p-$Laplacian. }
\label{sec-3}

Given $p>1$ and a smooth kernel $J$, we consider the problem
\[
(P2)\begin{cases}
\displaystyle 0=\int_{\Omega_r\cup {{\Gamma_r}}}J(x-y)\vert u(y,t)-u(x,t)\vert^{p-2}(u(y,t)-u(x,t))dy, & x\in\Omega_r, \, t>0,\\[10pt] 
\displaystyle \frac{\partial u}{\partial t}(x,t)=\int_{\Omega_r}J(x-y)\vert u(y,t)-u(x,t)\vert^{p-2}(u(y,t)-u(x,t))dy, & x\in{{\Gamma_r}}, \, t>0,\\[10pt]
u(x,0)=u_0(x),  & x\in{{\Gamma_r}}.
\end{cases} 
\]

\subsection{Existence and uniqueness}

Our main goal is to prove the existence and uniqueness of solutions using abstract semigroup theory (see the Appendix).
To this end we need to write our problem as
$$
u_t + B^J_p (u) =0, \qquad u(0) =u_0,
$$
for a suitable operator $B^J_p$.

To go on with this tasc, we first need the following lemma, that provides the nonlinear version of Lemma \ref{lem}.

\begin{lemma}\label{lema}
	Given $g(y)\in L^{p}({\Gamma_r})$, there exists a unique solution $u(x)=T(g)(x)\in L^{p}(\Omega_r)$ such that
	\[
	\begin{cases}
	\displaystyle 0=\int_{\Omega_r\cup {\Gamma_r}}J(x-y)\vert u(y)-u(x)\vert^{p-2}(u(y)-u(x))dy, & x\in\Omega_r,\\[10pt]
	u\equiv g, & \text{in}\; \Gamma_r.
	\end{cases}
	\]
\end{lemma}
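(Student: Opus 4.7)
The plan is to adapt the variational argument used to prove Lemma~\ref{lem} to the nonlinear setting. As before, extend $g$ by zero to $\hat{g}\in L^p(\Omega_r\cup\Gamma_r)$ (with $\hat{g}=g$ on $\Gamma_r$ and $\hat{g}=0$ on $\Omega_r$), and look for $u$ in the form $u=v+\hat{g}$ with $v$ in the closed affine subset
$$
\mathcal{A}=\bigl\{v\in L^p(\Omega_r\cup\Gamma_r)\,:\,v|_{\Gamma_r}\equiv 0\bigr\}.
$$
The equation we want is the Euler--Lagrange equation of
$$
\mathcal{F}(v):=\frac{1}{2p}\iint_{(\Omega_r\cup\Gamma_r)^2}J(x-y)\,\bigl|(v+\hat{g})(y)-(v+\hat{g})(x)\bigr|^p\,dy\,dx,
$$
so the plan is to apply the direct method to obtain a unique minimizer and then identify it with the desired $u$.

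First I would check that $\mathcal{F}$ is well defined, finite and continuous on $\mathcal{A}$, which is immediate from $J\in L^\infty$ and the bound $|a-b|^p\leq 2^{p-1}(|a|^p+|b|^p)$. Strict convexity of $\mathcal{F}$ is inherited from the strict convexity of $t\mapsto|t|^p$ for $p>1$: if $v_1,v_2\in\mathcal{A}$ give the same value of the integrand $J(x-y)$-a.e., then $v_1-v_2$ must be constant on every ball of radius $R$, hence constant on the connected set $\widehat{\Omega}=\Omega_r\cup\Gamma_r$, hence identically zero since $v_1=v_2=0$ on $\Gamma_r$. Strict convexity in particular yields weak lower semicontinuity. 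For coercivity I would invoke the nonlocal $p$-Poincar\'e inequality
$$
\iint_{(\Omega_r\cup\Gamma_r)^2}J(x-y)\,|w(y)-w(x)|^p\,dy\,dx\gtrsim \|w\|_{L^p(\Omega_r)}^p, \qquad w\in\mathcal{A},
$$
which is the $p$-analogue of \cite[Proposition 6.19]{AMRT} used in the linear case (see also \cite{AMRT2}), combined with the elementary convexity bound $|a+b|^p\geq 2^{1-p}|a|^p-|b|^p$ applied to $a=v(y)-v(x)$ and $b=\hat{g}(y)-\hat{g}(x)$ to separate the contributions of $v$ and $\hat{g}$ inside the double integral. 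The direct method (weak $L^p$-compactness of minimizing sequences together with weak lower semicontinuity) then produces a unique minimizer $v_*\in\mathcal{A}$.

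Finally, computing the first variation of $\mathcal{F}$ at $v_*$ in directions $\phi\in L^p(\Omega_r\cup\Gamma_r)$ with $\phi|_{\Gamma_r}\equiv 0$, and using the symmetry $J(x-y)=J(y-x)$ to merge the two resulting double integrals, gives for $u:=v_*+\hat{g}$ the pointwise identity
$$
\int_{\Omega_r\cup\Gamma_r}J(x-y)\,|u(y)-u(x)|^{p-2}(u(y)-u(x))\,dy=0 \qquad\text{for a.e. }x\in\Omega_r,
$$
which is exactly the required equation, while $u\equiv g$ on $\Gamma_r$ by construction. Uniqueness of $u$ is inherited from uniqueness of the minimizer. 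The only nontrivial technical ingredient is the $p$-Poincar\'e inequality for functions vanishing on the collar $\Gamma_r$; everything else is a routine application of the direct method of the calculus of variations.
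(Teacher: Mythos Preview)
Your proposal is correct and follows essentially the same approach as the paper: the paper's proof of Lemma~\ref{lema} simply says ``the same arguments used in the proof of Lemma~\ref{lem} work here,'' and the variational method you describe (minimize the $p$-energy over functions equal to $g$ on $\Gamma_r$, using the nonlocal $p$-Poincar\'e inequality for coercivity and strict convexity of $t\mapsto|t|^p$ for uniqueness) is precisely the intended adaptation---indeed, the paper spells out exactly this minimization formulation later in the proof of Lemma~\ref{lemaa}. Your write-up is considerably more detailed than the paper's, but the underlying strategy is the same.
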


\begin{proof}
	The same arguments used in the proof of Lemma \ref{lem} works here.
\end{proof}

Using Lemma \ref{lema}, we write 
\[
\hat{u}(x)=\begin{cases}
T(u)(x), & x\in\Omega_r,\\[10pt] 
u(x),  & x\in{{\Gamma_r}}.
\end{cases} 
\]

We study the existence and uniqueness of solutions of the problem $(P2)$ from the point of view of Nonlinear Semigroup Theory. 
To this end, we introduce the operator $B_p^J$,
$$B_p^J(u)(x)=-\int_{\Omega_r} J(x-y)\vert \hat{u}(y)-u(x)\vert^{p-2}(\hat{u}(y)-u(x))dy,$$
that is well defined for $u\in L^p({\Gamma_r})$ and then we have that $L^p({\Gamma_r})\subset D(B_p^J)$.

\begin{lemma}\label{lem1}
	For every $u, v\in L^p({\Gamma_r})$,
	$$\int_{{\Gamma_r}}B^J_p(u)(x)v(x)dx = \frac{1}{2}\iint_H J(x-y)\vert \hat{u}(y)-u(x)\vert^{p-2}(\hat{u}(y)-u(x))(\hat v(y)-v(x))dydx,$$
	where, as before, $H=\left[(\Omega_r\cup{{\Gamma_r}})\times(\Omega_r\cup{{\Gamma_r}})\right]\setminus({{\Gamma_r}}\times{{\Gamma_r}})$.
\end{lemma}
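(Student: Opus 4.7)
The plan is to recognise this identity as a nonlocal ``integration by parts'' formula, analogous to the standard symmetrization used for the nonlocal $p$-Laplacian with Neumann boundary, but adapted to the Dirichlet-to-Neumann setup of $(P2)$. Write $\Psi(x,y):=J(x-y)|\hat u(y)-\hat u(x)|^{p-2}(\hat u(y)-\hat u(x))$ and $\Phi(x,y):=\Psi(x,y)\bigl(\hat v(y)-\hat v(x)\bigr)$. Since $J$ is radially symmetric, $\Psi$ is antisymmetric under the swap $(x,y)\leftrightarrow(y,x)$, and $\Phi$ is symmetric. I would decompose
\[
H=(\Omega_r\times\Omega_r)\,\sqcup\,(\Omega_r\times\Gamma_r)\,\sqcup\,(\Gamma_r\times\Omega_r).
\]

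Using the $(x,y)$-symmetry of $\Phi$, the integrals over $\Omega_r\times\Gamma_r$ and $\Gamma_r\times\Omega_r$ are equal, so the RHS becomes
\[
\tfrac12\iint_{\Omega_r\times\Omega_r}\Phi \;+\; \iint_{\Gamma_r\times\Omega_r}\Phi.
\]
Now I would split the second piece by writing $\hat v(y)-\hat v(x)=\hat v(y)-\hat v(x)$ and integrating separately. For $x\in\Gamma_r$ we have $\hat u(x)=u(x)$ and $\hat v(x)=v(x)$, so
\[
-\iint_{\Gamma_r\times\Omega_r}\Psi(x,y)\,\hat v(x)\,dy\,dx
= \int_{\Gamma_r}\Bigl(-\!\int_{\Omega_r}J(x-y)|\hat u(y)-u(x)|^{p-2}(\hat u(y)-u(x))\,dy\Bigr)v(x)\,dx,
\]
which is exactly $\int_{\Gamma_r}B_p^J(u)v$, i.e.\ the LHS. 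Thus it remains to prove
\[
\tfrac12\iint_{\Omega_r\times\Omega_r}\Phi(x,y)\,dy\,dx \;+\; \iint_{\Gamma_r\times\Omega_r}\Psi(x,y)\,\hat v(y)\,dy\,dx \;=\;0.
\]

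For this final identity I would use the elliptic equation from Lemma \ref{lema}: for every $x\in\Omega_r$,
\[
0=\int_{\Omega_r\cup\Gamma_r}\Psi(x,y)\,dy.
\]
Multiplying by $\hat v(x)$, integrating over $x\in\Omega_r$, and applying the antisymmetry $\Psi(y,x)=-\Psi(x,y)$ (via a Fubini swap on the $\Omega_r\times\Gamma_r$ term) yields
\[
\iint_{\Omega_r\times\Omega_r}\Psi(x,y)\,\hat v(x)\,dy\,dx \;=\; \iint_{\Gamma_r\times\Omega_r}\Psi(x,y)\,\hat v(y)\,dy\,dx.
\]
On the other hand, the same antisymmetry, applied to $\Phi(x,y)=\Psi(x,y)(\hat v(y)-\hat v(x))$ on the symmetric region $\Omega_r\times\Omega_r$, gives $\tfrac12\iint_{\Omega_r\times\Omega_r}\Phi = \iint_{\Omega_r\times\Omega_r}\Psi(x,y)\hat v(y)\,dy\,dx = -\iint_{\Omega_r\times\Omega_r}\Psi(x,y)\hat v(x)\,dy\,dx$. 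Combining these two identities produces the desired cancellation.

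The main subtlety is bookkeeping: keeping track of the antisymmetry/symmetry of $\Psi$ and $\Phi$ under the swap, and distinguishing the role of $\hat v(x)$ versus $\hat v(y)$ on each subregion. Once one is careful about this, the proof reduces to Fubini plus the elliptic equation for $\hat u$ tested against $\hat v$. (Integrability of all the terms follows from $u,v\in L^p(\Gamma_r)$, $T(u),T(v)\in L^p(\Omega_r)$ via Lemma \ref{lema}, and the boundedness and compact support of $J$.)
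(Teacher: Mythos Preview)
Your proposal is correct and uses essentially the same two ingredients as the paper: the antisymmetry of $\Psi(x,y)=J(x-y)|\hat u(y)-\hat u(x)|^{p-2}(\hat u(y)-\hat u(x))$ under $(x,y)\leftrightarrow(y,x)$, and the elliptic equation satisfied by $\hat u$ in $\Omega_r$ tested against $\hat v$. The only difference is organizational: the paper extends the outer integral from $\Omega_r$ to $\Omega_r\cup\Gamma_r$, symmetrizes on the full square $(\Omega_r\cup\Gamma_r)^2$ and on $\Gamma_r\times\Gamma_r$, and subtracts, whereas you decompose $H$ into $\Omega_r\times\Omega_r$, $\Omega_r\times\Gamma_r$ and $\Gamma_r\times\Omega_r$ and symmetrize piecewise; both routes arrive at the same cancellation.
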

\begin{proof}
	First, we know that
	\begin{eqnarray}\label{one}
	\int_{{\Gamma_r}}B^J_p(u)(x)v(x)dx = \underbrace{-\int_{{\Gamma_r}}\int_{\Omega_r} J(x-y)\vert \hat{u}(y)-u(x)\vert^{p-2}(\hat{u}(y)-u(x))v(x)dydx}_{\mathbb Y}.
	\end{eqnarray}
	Also, we have
	\begin{eqnarray*}
		0&=&\int_{\Omega_r}\int_{\Omega_r\cup {\Gamma_r}} J(x-y)\vert \hat{u}(y)-u(x)\vert^{p-2}(\hat{u}(y)-u(x))v(x)dydx\\[10pt]
		&=&\int_{\Omega_r\cup{\Gamma_r}}\int_{\Omega_r\cup{\Gamma_r}}J(x-y)\vert \hat{u}(y)-u(x)\vert^{p-2}(\hat{u}(y)-u(x))v(x)dydx\\[10pt]
		& & - \int_{{\Gamma_r}}\int_{\Omega_r}J(x-y)\vert \hat{u}(y)-u(x)\vert^{p-2}(\hat{u}(y)-u(x))v(x)dydx\\[10pt]
		& & - \int_{{\Gamma_r}}\int_{{\Gamma_r}}J(x-y)\vert \hat{u}(y)-u(x)\vert^{p-2}(\hat{u}(y)-u(x))v(x)dydx\\[10pt]
		&=& -\frac{1}{2}\int_H J(x-y)\vert \hat{u}(y)-u(x)\vert^{p-2}(\hat{u}(y)-u(x))(\hat v(y)-v(x))dydx\\[10pt]
		& & - \underbrace{\int_{{\Gamma_r}}\int_{\Omega_r} J(x-y)\vert \hat{u}(y)-u(x)\vert^{p-2}(\hat{u}(y)-u(x))v(x)dydx}_{-\mathbb Y}.
	\end{eqnarray*}
	The proof is complete.
\end{proof}

\begin{lemma}\label{lem2}
	Let $P:\mathbb R\rightarrow \mathbb R$ be a nondecreasing function. Then,
	\begin{itemize}
		\item[a)] for every $u,v\in L^p({\Gamma_r})$ such that $P(u-v)\in L^p({\Gamma_r})$, we have
		\begin{eqnarray}\label{eq1}
		\int_{{\Gamma_r}}(B_p^J(u)(x)-B_p^J(v)(x))P(u(x)-v(x))dx=A(x,y)
		\end{eqnarray}
		where
		\begin{eqnarray*}
			A(x,y)&=& \frac{1}{2}\iint_H J(x-y)(P(\hat{u}(y)-\hat{u}(y))-P(u(x)-v(x)))\\[10pt]
			& &\times(\vert \hat{u}(y)-u(x)\vert^{p-2}(\hat{u}(y)-u(x))-\vert \hat{v}(y)-v(x)\vert^{p-2}(\hat{v}(y)-v(x))).
		\end{eqnarray*}
		\item[b)] If $P$ is bounded, \eqref{eq1} holds for every $u, v\in D(B_p^J)$. 
	\end{itemize}
\end{lemma}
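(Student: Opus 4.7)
The plan is to mimic the symmetrization argument from Lemma \ref{lem1}, using as a ``test function'' on $\Omega_r$ the natural extension $P(\hat u - \hat v)$ of the given $P(u-v)$ on $\Gamma_r$. Let me set the shorthand $F(a,b):=|a-b|^{p-2}(a-b)$ and $\Phi(x,y):=F(\hat u(y),\hat u(x))-F(\hat v(y),\hat v(x))$. Note that $\Phi$ is antisymmetric in $(x,y)$ while $J(x-y)$ is symmetric, and that $\hat u=u$, $\hat v=v$ on $\Gamma_r$. By definition of $B_p^J$, the left-hand side of \eqref{eq1} reads
\[
I:=-\int_{\Gamma_r}\!\int_{\Omega_r}J(x-y)\,\Phi(x,y)\,P(\hat u(x)-\hat v(x))\,dy\,dx .
\]

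The key algebraic ingredient is the zero identity produced by the first equation of $(P2)$: for each $x\in\Omega_r$, $\int_{\Omega_r\cup\Gamma_r}J(x-y)F(\hat u(y),\hat u(x))\,dy=0$ and the analogous identity for $\hat v$. Subtracting, multiplying by $P(\hat u(x)-\hat v(x))$ and integrating over $\Omega_r$ gives
\[
0=\int_{\Omega_r}\!\int_{\Omega_r\cup\Gamma_r}J(x-y)\,\Phi(x,y)\,P(\hat u(x)-\hat v(x))\,dy\,dx.
\]
Splitting the inner region into $\Omega_r$ and $\Gamma_r$, this lets one exchange the integral of $\Phi\cdot P(\hat u(x)-\hat v(x))$ over $\Omega_r\times\Omega_r$ for the negative of the same integrand taken over $\Omega_r\times\Gamma_r$.

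After this substitution, I intend to assemble the right-hand side $A$ by decomposing $H=(\Omega_r\times\Omega_r)\cup(\Omega_r\times\Gamma_r)\cup(\Gamma_r\times\Omega_r)$ in the display for $A$. On $\Omega_r\times\Omega_r$, using antisymmetry of $\Phi$ and symmetry of $J$, the two terms coming from the $P$-difference coincide after swapping $x\leftrightarrow y$, producing $-\int_{\Omega_r\times\Omega_r}J(x-y)\Phi(x,y)P(\hat u(x)-\hat v(x))$; by the zero identity this rewrites as $+\int_{\Omega_r\times\Gamma_r}J(x-y)\Phi(x,y)P(\hat u(x)-\hat v(x))$. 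A single $x\leftrightarrow y$ swap in this last integral shows it cancels exactly the piece $\int_{\Gamma_r\times\Omega_r}J(x-y)\Phi(x,y)P(\hat u(y)-\hat v(y))$ coming from the $P(\hat u(y)-\hat v(y))$-part on $\Gamma_r\times\Omega_r$. What survives is precisely $I$, yielding $A=I$ as claimed in part (a).

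For part (b), the only issue is integrability of the intermediate expressions, since the hypothesis $P(u-v)\in L^p(\Gamma_r)$ was used implicitly to apply Fubini and to make sense of the pairing with $B_p^J(u)-B_p^J(v)$. When $P$ is bounded by some $M$, every integrand in the chain above is dominated by $M\,J(x-y)\,|\Phi(x,y)|$, which is absolutely integrable whenever $u,v\in D(B_p^J)$ (since $B_p^J(u),B_p^J(v)\in L^1$ guarantees the $y$-integrated bound, and $J$ is bounded and compactly supported). Thus every step of the manipulation extends by Fubini--Tonelli, and the identity persists for all $u,v\in D(B_p^J)$. The main obstacle in the write-up is the careful bookkeeping in the symmetrization across the three subregions of $H$; once the antisymmetry of $\Phi$ and symmetry of $J$ are systematically exploited, everything collapses cleanly.
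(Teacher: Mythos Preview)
Your proposal is correct and follows essentially the same symmetrization strategy as the paper, which simply writes the left-hand side as $\int_{\Gamma_r}B_p^J(u)P(u-v)-\int_{\Gamma_r}B_p^J(v)P(u-v)$ and invokes Lemma~\ref{lem1} on each term. The only (minor) difference is that you spell the argument out explicitly with the concrete extension $P(\hat u-\hat v)$ on $\Omega_r$, whereas a literal citation of Lemma~\ref{lem1} would produce the $T$-extension $\widehat{P(u-v)}$; since the zero identity for $\hat u$ (and $\hat v$) annihilates any test function on $\Omega_r$, both choices work, and yours is in fact the one that matches the statement of the lemma and is needed for the monotonicity application in Theorem~\ref{main6}.
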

\begin{proof}
	To show a), we just observe that, since
	$$
	\begin{array}{l}
	\displaystyle \int_{{\Gamma_r}}(B_p^J(u)(x)-B_p^J(v)(x))P(u(x)-v(x))dx \\[10pt]
	\qquad \displaystyle = \int_{{\Gamma_r}}B_p^J(u)P(u-v)(x)dx - \int_{{\Gamma_r}}B_p^J(v)P(u-v)(x)dx,
	\end{array}
	$$
	applying Lemma \ref{lem1}, the result follows.
	
	If $P$ is bounded, b) follows from a). 
\end{proof}

\begin{theorem}\label{main6}
	The operator $B_p^J$ is completely accretive and satisfies the range condition $$L^p({\Gamma_r})\subset R(I+B_p^J).$$
\end{theorem}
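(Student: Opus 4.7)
The plan is to dispatch complete accretivity as a direct consequence of the symmetric identity in Lemma \ref{lem2}, and to establish the range condition by a variational minimization whose Euler--Lagrange equations recover precisely $u+B_p^J u=f$.

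\textbf{Complete accretivity.} Starting from Lemma \ref{lem2}(b), valid for any bounded nondecreasing $P$ with $P(0)=0$, we have
\begin{equation*}
\int_{\Gamma_r}(B_p^J u - B_p^J v)P(u-v)\,dx = \frac{1}{2}\iint_H J(x-y)\bigl[P(\hat u(y) - \hat v(y)) - P(u(x) - v(x))\bigr]\Delta(x,y)\,dy\,dx,
\end{equation*}
where $\Delta(x,y) = |\hat u(y) - u(x)|^{p-2}(\hat u(y) - u(x)) - |\hat v(y) - v(x)|^{p-2}(\hat v(y) - v(x))$. Writing $a=\hat u(y)-u(x)$ and $b=\hat v(y)-v(x)$, the trivial identity $a-b=(\hat u(y)-\hat v(y))-(u(x)-v(x))$ together with monotonicity of $P$ and of the strictly increasing map $s\mapsto|s|^{p-2}s$ (for $p>1$) force both bracketed factors to share the sign of $a-b$. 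Hence the integrand is pointwise nonnegative and complete accretivity follows.

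\textbf{Range condition via variational minimization.} For $f\in L^p(\Gamma_r)$, the plan to produce $u$ satisfying $u+B_p^J u = f$ is to minimize the strictly convex functional
\begin{equation*}
F(v) = \frac{1}{2p}\iint_H J(x-y)|v(y)-v(x)|^p\,dy\,dx + \frac{1}{2}\int_{\Gamma_r} v(x)^2\,dx - \int_{\Gamma_r} f(x)\,v(x)\,dx
\end{equation*}
over $v\in L^p(\Omega_r\cup\Gamma_r)$ and to read the solution off the first--order conditions. By symmetry of $H$ and of $J$ (and oddness of $s\mapsto|s|^{p-2}s$), the first variation in a direction $\psi$ reduces to
\begin{equation*}
\partial F(v)[\psi] = -\int_{\Omega_r\cup\Gamma_r}\psi(x)\bigg(\int_{\{y:(x,y)\in H\}} J(x-y)|v(y)-v(x)|^{p-2}(v(y)-v(x))\,dy\bigg)dx + \int_{\Gamma_r}(v-f)\psi\,dx.
\end{equation*}
For $x\in\Omega_r$ the inner slice is $\Omega_r\cup\Gamma_r$, giving the elliptic equation in $(P2)$; for $x\in\Gamma_r$ it reduces to $\Omega_r$, giving $v(x) - \int_{\Omega_r} J(x-y)|v(y)-v(x)|^{p-2}(v(y)-v(x))\,dy = f(x)$. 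Setting $u=v|_{\Gamma_r}$, the uniqueness in Lemma \ref{lema} identifies $v|_{\Omega_r}$ with the extension $T(u)$, so $v=\hat u$ and the boundary equation is exactly $u+B_p^J u=f$.

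\textbf{Main obstacle: coercivity and existence of the minimizer.} The hard part will be producing the minimizer. Strict convexity and weak lower semicontinuity of $F$ on $L^p(\Omega_r\cup\Gamma_r)$ are routine; the real work is coercivity, namely $F(v)\to+\infty$ whenever $\|v\|_{L^p(\Omega_r\cup\Gamma_r)}\to\infty$. The plan is to combine three ingredients: (i) the quadratic term controls $\|v\|_{L^2(\Gamma_r)}$, and hence $\|v\|_{L^p(\Gamma_r)}$ when $p\le 2$; (ii) the $\Omega_r\times\Omega_r$ piece of the nonlocal $p$--energy, via the Poincar\'e-type estimate used in Proposition 6.19 of \cite{AMRT}, controls $\|v-\bar v_{\Omega_r}\|_{L^p(\Omega_r)}$; (iii) the cross term $\iint_{\Omega_r\times\Gamma_r}J|v(y)-v(x)|^p$ ties $\bar v_{\Omega_r}$ to the $\Gamma_r$--values and closes the loop. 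When $p<2$ (where $f\in L^p$ need not lie in $L^2$), I would first prove the range inclusion for $f$ in the dense subset $L^p\cap L^\infty(\Gamma_r)$ and then pass to general $f\in L^p(\Gamma_r)$ by density, using that complete accretivity makes $(I+B_p^J)^{-1}$ a contraction on $L^p(\Gamma_r)$, so $R(I+B_p^J)$ is closed.
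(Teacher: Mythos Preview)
Your treatment of complete accretivity matches the paper's: both invoke Lemma~\ref{lem2} and the monotonicity of $s\mapsto|s|^{p-2}s$ to get the nonnegativity of the integrand. No issue there.

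For the range condition your route is genuinely different. The paper does \emph{not} minimize a functional; instead it works directly on $L^p(\Gamma_r)$ with the auxiliary operator
\[
A_{n,m}(u)=T_c(u)+B_p^J(u)+\tfrac{1}{n}|u|^{p-2}u^+-\tfrac{1}{m}|u|^{p-2}u^-,
\]
proves $L^p(\Gamma_r)$--coercivity from the penalty terms alone, applies Brezis's surjectivity result, and then removes the penalties by monotone limits in $n$ and $m$ (first for $\phi\in L^\infty$, then by density for $\phi\in L^p$). Your variational approach over $L^p(\Omega_r\cup\Gamma_r)$ is instead the strategy the paper reserves for the \emph{singular} kernel (Theorem~\ref{teo.river}), where a compact fractional embedding is available and the range condition is stated in $L^2$, not $L^p$.

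The gap in your plan is coercivity when $p>2$. Your ingredient (i) only yields $\|v\|_{L^2(\Gamma_r)}$, and you explicitly note that this implies $\|v\|_{L^p(\Gamma_r)}$ only for $p\le 2$. Ingredients (ii)--(iii) control $v$ on $\Omega_r$ and tie the $\Omega_r$--average to $\Gamma_r$, but neither produces an $L^p(\Gamma_r)$ bound from an $L^2(\Gamma_r)$ bound. To squeeze $\|v\|_{L^p(\Gamma_r)}$ out of the cross term you would need $\int_{\Omega_r}J(x-y)\,dy\ge c_0>0$ uniformly for $x\in\Gamma_r$, i.e.\ $r<R$, which is not assumed in Theorem~\ref{main6}; and for $r=R$ a concentrating sequence near $\partial\widehat\Omega$ shows the functional is not coercive in $L^p(\Gamma_r)$. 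The paper's penalty terms sidestep this entirely by providing $L^p$--coercivity directly, which is exactly what your mixed $L^2$--$L^p$ functional lacks. If you want to keep the variational route for smooth kernels, you would need either an a~posteriori $L^p$ bound on the minimizer (e.g.\ testing $u+B_p^J u=f$ against truncations of $|u|^{p-2}u$) or to add and then remove an $L^p$ penalty---at which point you are essentially back to the paper's scheme.
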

\begin{proof}
	
	To prove this result, we will use some ideas of \cite[Theorem 6.7]{AMRT}. Given $u_i\in D(B_p^J)$, $i=1,2$ and $q\in P_0$, by the monotonicity Lemma \ref{lem2}, we have that
	$$\int_{{\Gamma_r}}(B_p^J(u_1)(x)-B_p^J(u_2)(x))q(u_1(x)-u_2(x))dx\geq 0,$$
	so $B_p^J$ is completely accretive operator.
	
	To show that $B_p^J$ verifies the range condition, we have to show that for every function $\phi\in L^p({\Gamma_r})$, there exists $u\in D(B_p^J)$ such that 
	\begin{eqnarray}
	u=(I+B_p^J)^{-1}\phi.
	\end{eqnarray}
	
	First, assume that $\phi\in L^\infty({\Gamma_r})$ and consider the following continuous monotone operator
	$$A_{n,m}(u):=T_c(u)+B_p^J(u)+\frac{1}{n}\vert u\vert^{p-2}u^+-\frac{1}{m}\vert u\vert^{p-2}u^-,$$
	where $T_c(a)=c\wedge(r\vee(-c))$, $c\geq 0$ and $a\in\mathbb R$. We have that $A_{n,m}$ is coercive in $L^p({\Gamma_r})$: 
	\begin{eqnarray*}
		\int_{{\Gamma_r}}A_{n,m}(u(x))u(x)dx&=&\underbrace{\int_{{\Gamma_r}}T_c(u(x))u(x)dx}_{\geq 0}+\int_{{\Gamma_r}}B_p(u)(x)u(x)dx\\[10pt]
		& & +\frac{1}{n}\int_{{{\Gamma_r}}} \vert u^+(x)\vert^p dx+\frac{1}{m}\int_{{{\Gamma_r}}}\vert u^-(x)\vert^pdx\\[10pt]
		&\geq& \underbrace{\frac{1}{2}\int_H J(x-y)\vert \hat{u}(y)-u(x)\vert^p dydx}_{\geq 0}+\min\left\lbrace \frac{1}{n},\frac{1}{m}\right\rbrace\int_{{\Gamma_r}}\vert u(x)\vert^pdx\\[10pt]
		&\geq& \min\left\lbrace \frac{1}{n},\frac{1}{m}\right\rbrace\int_{{\Gamma_r}}\vert u(x)\vert^pdx.
	\end{eqnarray*}
	
	Hence, 
	$$\lim_{\Vert u\Vert_{L^p({\Gamma_r})}\rightarrow+\infty}\dfrac{\displaystyle \int_{{\Gamma_r}}A_{n,m}(u)u}{\Vert u\Vert_{L^p({\Gamma_r})}}=+\infty,$$
	and we obtain that $A_{n,m}$ is coercive.
	
	Now, applying \cite[Corollary 30]{B}, there exists $u_{n,m}\in L^p({\Gamma_r})$ such that
	$$T_c(u_{n,m})+B_p^J(u_{n,m})+\frac{1}{n}\vert u_{n,m}\vert^{p-2}u_{n,m}^+-\frac{1}{m}\vert u_{n,m}\vert^{p-2}u_{n,m}^- = \phi.$$
	Using the monotonicity of $$B_p^J(u_{n,m})+\frac{1}{n}\vert u_{n,m}\vert^{p-2}u_{n,m}^+-\frac{1}{m}\vert u_{n,m}\vert^{p-2}u_{n,m}^-,$$ we obtain that $T_c(u_{n,m})\ll \phi$. Hence, if $c> \Vert \phi\Vert_{L^\infty({\Gamma_r})}$, from $u_{n,m}<<\phi$ we get $c> \Vert u_{n,m}\Vert_{L^\infty({\Gamma_r})}$, and we have
	$$u_{n,m}+B_p^J(u_{n,m})+\frac{1}{n}\vert u_{n,m}\vert^{p-2}u_{n,m}^+-\frac{1}{m}\vert u_{n,m}\vert^{p-2}u_{n,m}^- = \phi.$$
	Now, since $u_{n,m}$ is increasing in $n$ and decreasing in $m$, as $u_{n,m}\ll \phi$, we can pass to the limit as $n\rightarrow+\infty$ obtaining that $u_m$ is solution of
	$$u_m+B_p^J(u_{m})-\frac{1}{m}\vert u_{m}\vert^{p-2}u_{m}^- = \phi.$$
	Since $u_m$ is decreasing in $m$, passing to the limit as $m\rightarrow+\infty$, we obtain a solution to
	$$u+B_p^J(u)=\phi.$$
	This proves the range condition for $\phi \in L^\infty({\Gamma_r})$.
	
	To obtain the range condition for $\phi \in L^p({\Gamma_r})$, take $\phi\in L^p({\Gamma_r})$ and $\phi_n\in L^\infty({\Gamma_r})$ such that $\phi_n\rightarrow\phi$ in $L^p({\Gamma_r})$. Hence, by the previous case, $u_n =(I+B_p^J)^{-1}\phi_n$. Since $B_p^J$ is completely accretive, $u_n\rightarrow u$ in $L^p({\Gamma_r})$ and $B_p^J(u_n)\rightarrow B_p^J(u)$ in $L^{p'}({\Gamma_r})$. Then, we conclude that $u+B_p^J(u)=\phi$.
\end{proof}

Now we are ready to state and prove (as an immediate consequence of our previous results) our existence and uniqueness result.

\begin{theorem}\label{main5.77}{\it
		Suppose $p>1$ and let $u_0\in L^p({\Gamma_r})$. then, for any $T>0$ and $t<T$, there exists a unique solution $u(x,t)\in\mathcal C([0,\infty); L^p(\Gamma_r))\cap  W^{1,1}_{\text{loc}}((0,\infty);L^p(\Gamma_r)))$ to (P2).}
\end{theorem}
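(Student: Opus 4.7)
The plan is to recast $(P2)$ as an abstract Cauchy problem in $L^p(\Gamma_r)$ and then invoke the nonlinear semigroup machinery collected in the appendix. The key observation is that, given any $u(\cdot,t)\in L^p(\Gamma_r)$, Lemma \ref{lema} supplies a unique extension $\hat u(\cdot,t)=T(u(\cdot,t))$ on $\Omega_r$ which, by construction, satisfies the first equation of $(P2)$ pointwise in $\Omega_r$. Substituting this extension into the second equation of $(P2)$ reduces the problem to
$$
u_t + B_p^J(u) = 0 \quad \text{in } L^p(\Gamma_r), \qquad u(0) = u_0,
$$
where $B_p^J$ is the operator introduced just before Theorem \ref{main6}. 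Thus solving $(P2)$ is equivalent to solving this evolution equation in $L^p(\Gamma_r)$.

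Next, I would appeal to Theorem \ref{main6}, which tells us that $B_p^J$ is completely accretive (in particular accretive in $L^p(\Gamma_r)$) and that $L^p(\Gamma_r)\subset R(I+B_p^J)$. Since $D(B_p^J)=L^p(\Gamma_r)$, the operator is densely defined and m-accretive. The Crandall--Liggett generation theorem then produces, for every $u_0\in\overline{D(B_p^J)}=L^p(\Gamma_r)$, a unique mild solution $u\in\mathcal C([0,\infty); L^p(\Gamma_r))$ given by the exponential formula $u(t)=\lim_{n\to\infty}(I+(t/n)B_p^J)^{-n}u_0$, with convergence uniform on compact time intervals. Uniqueness at the mild-solution level is automatic from accretivity, via the $L^p$-contraction $\|S(t)u_0-S(t)v_0\|_{L^p(\Gamma_r)}\leq\|u_0-v_0\|_{L^p(\Gamma_r)}$.

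The remaining task is to upgrade this mild solution to a strong one, i.e., to show $u\in W^{1,1}_{\mathrm{loc}}((0,\infty); L^p(\Gamma_r))$. Here I would use that $L^p(\Gamma_r)$ is reflexive for $p>1$ (hence has the Radon--Nikodym property), together with the complete accretivity of $B_p^J$. The standard regularization result for completely accretive m-accretive operators in such spaces (recalled in the appendix) then yields that $t\mapsto u(\cdot,t)$ is locally absolutely continuous on $(0,\infty)$, differentiable almost everywhere, with $u_t=-B_p^J(u)$ a.e.; this is exactly the $W^{1,1}_{\mathrm{loc}}$ regularity required.

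The main obstacle is precisely this last step: the mild-to-strong passage cannot be obtained from bare accretivity and the range condition, and it is here that complete accretivity together with the reflexivity of $L^p$ for $p>1$ plays its role. Everything else in the argument is a direct consequence of Theorem \ref{main6} combined with Lemma \ref{lema}, and is essentially mechanical.
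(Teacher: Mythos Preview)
Your proposal is correct and follows essentially the same route as the paper: both recast $(P2)$ as the abstract Cauchy problem $u_t+B_p^J(u)=0$ via Lemma~\ref{lema}, invoke Theorem~\ref{main6} for complete accretivity and the range condition, and then appeal to the nonlinear semigroup results in the appendix. The paper's proof is a one-line citation of \cite[Corollary A.52]{AMRT}, whereas you spell out the Crandall--Liggett construction and the mild-to-strong upgrade via complete accretivity and reflexivity of $L^p$; but this is the same argument unpacked.
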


\begin{proof}[Proof of Theorem \ref{main5.77}] By \cite[Corollary A.52]{AMRT} (see Section \ref{app}), Theorem \ref{main5} is proved.
\end{proof}

Using the same ideas than in Proposition \ref{propmasa} (then we omit the proof), we can obtain easily that $u(x,t)$ preserves the total mass of the initial condition.
\begin{proposition}\label{propmasa.p}
	For all $t>0$,
	$$\int_{\Gamma_r}u(x,t)dx= \int_{\Gamma_r}u_0(x)dx.$$
\end{proposition}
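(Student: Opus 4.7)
The plan is to imitate the proof of Proposition \ref{propmasa}; the only new ingredient is that the nonlinearity $\phi_p(s):=|s|^{p-2}s$ is odd, $\phi_p(-s)=-\phi_p(s)$, so that together with the radial symmetry $J(x-y)=J(y-x)$, the integrand $J(x-y)\,\phi_p(u(y,t)-u(x,t))$ is antisymmetric under the swap $(x,y)\leftrightarrow(y,x)$. The regularity $u\in W^{1,1}_{\mathrm{loc}}((0,\infty);L^p(\Gamma_r))$ supplied by Theorem \ref{main5.77} justifies differentiating under the integral in $t$.

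First I would differentiate and apply the second equation of $(P2)$ to obtain
\begin{equation*}
\frac{d}{dt}\int_{\Gamma_r} u(x,t)\,dx = \int_{\Gamma_r}\int_{\Omega_r} J(x-y)\,\phi_p\bigl(u(y,t)-u(x,t)\bigr)\,dy\,dx.
\end{equation*}
Next I would integrate the first equation of $(P2)$ over $\Omega_r$, splitting the inner domain as $\Omega_r \cup \Gamma_r$:
\begin{equation*}
0 = \int_{\Omega_r}\int_{\Omega_r} J(x-y)\,\phi_p\bigl(u(y,t)-u(x,t)\bigr)\,dy\,dx + \int_{\Omega_r}\int_{\Gamma_r} J(x-y)\,\phi_p\bigl(u(y,t)-u(x,t)\bigr)\,dy\,dx.
\end{equation*}
By Fubini together with the antisymmetry noted above, the first summand vanishes. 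Applying Fubini and then relabeling $(x,y)\leftrightarrow(y,x)$ in the second summand turns it into the negative of the right-hand side of the first display; consequently that right-hand side equals $0$. Hence $\frac{d}{dt}\int_{\Gamma_r} u(x,t)\,dx = 0$, and integrating in time yields the claimed identity.

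I do not expect a serious obstacle. All the integrals are absolutely convergent because $J$ is bounded with compact support and the $L^p$ bound on $T(u)(\cdot,t)$ from Lemma \ref{lema} controls the extension $\hat u$ on $\Omega_r$; the time differentiation is legitimate thanks to the $W^{1,1}_{\mathrm{loc}}$ regularity from Theorem \ref{main5.77}. The only point requiring care is to invoke the oddness of $\phi_p$ at exactly the correct places in the symmetrization step, which is immediate from the definition $\phi_p(s)=|s|^{p-2}s$.
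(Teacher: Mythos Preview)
Your proposal is correct and follows exactly the approach the paper intends: the paper omits the proof, stating only that it uses ``the same ideas than in Proposition \ref{propmasa}'', and your argument is precisely the nonlinear analogue of that proof, with the oddness of $\phi_p(s)=|s|^{p-2}s$ replacing the linearity in the antisymmetry step.
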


\subsection{Asymptotic behaviour}

Our first result in this section shows that $u(x,t)$ converges to the mean value of the initial datum in $L^p$ 
(notice that $p$ is the exponent that appears in the equations in $(P2)$). We use again that the width of $\Gamma_r$ verifies $r<R$.

\begin{theorem}\label{mainn2} Fix $1<p<\infty$.
	If $\beta$ as in Theorem \ref{main4} and $u_0\in L^p({\Gamma_r}) \cap L^2 (\Gamma_r)$, then
	$$\int_{{\Gamma_r}}\left\vert u(x,t)-\fint_{{{\Gamma_r}}}u_0\right\vert^p dx \leq C\dfrac{\Vert u_0\Vert_{L^2({\Gamma_r})}^2}{t},$$
	where $C=C(J, p, \Omega_r, {\Gamma_r})$.
\end{theorem}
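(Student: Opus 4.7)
The plan is to set $v(x,t) := u(x,t) - \fint_{\Gamma_r} u_0$, multiply the evolution equation by $v$, and combine the resulting energy dissipation identity with the Poincar\'e--type inequality of Theorem \ref{main4} and the $L^p$--contractivity of the semigroup generated by $B_p^J$.

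First I would observe that if $u$ solves $(P2)$, then $v = u - c$ with $c = \fint_{\Gamma_r} u_0$ also solves $(P2)$. Indeed, the nonlocal operator is translation invariant (the integrand only involves differences $u(y)-u(x)$), and for the extension one checks that $\widehat{u-c} = \hat u - c$ because if $\hat u$ satisfies the first equation of $(P2)$ then so does $\hat u - c$, and by the uniqueness in Lemma \ref{lema} this must be the extension. Moreover, by Proposition \ref{propmasa.p}, $\int_{\Gamma_r} v(x,t)\,dx = \int_{\Gamma_r} v_0\,dx = 0$ for all $t\ge 0$.

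Next I would differentiate the $L^2$--norm. Multiplying the second equation in $(P2)$ (for $v$) by $v$ and integrating over $\Gamma_r$, and then applying Lemma \ref{lem1} with $u=v$ (which gives, thanks to the first equation on $\Omega_r$, an integration--by--parts on the full set $H$), one obtains
\begin{equation}
\frac{d}{dt}\int_{\Gamma_r}\frac{v^2(x,t)}{2}\,dx
= -\frac{1}{2}\iint_H J(x-y)\,|\hat v(y,t)-\hat v(x,t)|^{p}\,dy\,dx.
\end{equation}
Since $\int_{\Gamma_r} v(\cdot,t)=0$ and $r<R$, Theorem \ref{main4} (applied to $\hat v(\cdot,t)$, which is defined on $\Omega_r\cup\Gamma_r$) yields
\begin{equation}
\iint_H J(x-y)\,|\hat v(y,t)-\hat v(x,t)|^{p}\,dy\,dx \;\ge\; \beta \int_{\Gamma_r}|v(x,t)|^{p}\,dx.
\end{equation}
Integrating the resulting differential inequality from $0$ to $t$ and using $\|v_0\|_{L^2(\Gamma_r)}\le \|u_0\|_{L^2(\Gamma_r)}$ (since $\fint_{\Gamma_r}u_0$ is the best $L^2$ constant approximation of $u_0$), one gets
\begin{equation}
\frac{\beta}{2}\int_0^t \int_{\Gamma_r}|v(x,s)|^{p}\,dx\,ds \;\le\; \frac{1}{2}\|u_0\|_{L^2(\Gamma_r)}^{2}.
\end{equation}

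The last step, which I expect to be the main obstacle, is to upgrade this time--averaged estimate into a pointwise--in--time estimate. This is done by showing that $t\mapsto \|v(t)\|_{L^p(\Gamma_r)}^p$ is non--increasing. Since $B_p^J$ is completely accretive (Theorem \ref{main6}), the associated semigroup is an $L^q$--contraction for every $q\in[1,\infty]$; applied to the two solutions $v(\cdot,t)$ and the stationary solution $0$ (noting $B_p^J(0)=0$), this gives that $\|v(t)\|_{L^p(\Gamma_r)}$ is non--increasing in $t$. Therefore
\begin{equation}
t\,\|v(t)\|_{L^p(\Gamma_r)}^{p} \;\le\; \int_0^t \|v(s)\|_{L^p(\Gamma_r)}^{p}\,ds \;\le\; \frac{1}{\beta}\,\|u_0\|_{L^2(\Gamma_r)}^{2},
\end{equation}
which is exactly the claimed bound with $C = 1/\beta$, absorbing the dependence on $J, p, \Omega_r, \Gamma_r$ into the constant $\beta$ coming from Theorem \ref{main4}.
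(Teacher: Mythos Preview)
Your proof is correct and follows essentially the same strategy as the paper's: the $L^2$ energy identity yields $\int_0^t\iint_H J(x-y)|\hat v(y,s)-\hat v(x,s)|^p\,dy\,dx\,ds\le\|u_0\|_{L^2}^2$, Theorem~\ref{main4} converts the dissipation into $\int_{\Gamma_r}|v|^p$, and the monotonicity of $t\mapsto\|v(t)\|_{L^p}$ turns the time integral into a pointwise bound via $t\|v(t)\|_{L^p}^p\le\int_0^t\|v(s)\|_{L^p}^p\,ds$. The only cosmetic difference is that the paper proves the monotonicity of $\|v(t)\|_{L^p}$ by a direct computation of $\partial_t\int_{\Gamma_r}|v|^p$ (using Lemma~\ref{lem1} and the sign of the resulting symmetric form), whereas you invoke the complete accretivity of $B_p^J$ from Theorem~\ref{main6}; both are valid and the rest of the argument is identical.
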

\begin{proof}
	Let $$w(x,t)=u(x,t)-\fint_{{{\Gamma_r}}}u_0.$$
	Since $u_0(x)\in L^p(\Gamma_r)$, we have that $u(x,t)\in L^p(\Gamma_r)$ and then $\Vert w(x,t)\Vert_{L^p(\Gamma_r)}<+\infty$. Therefore, 
	we obtain that $$\frac{\partial}{\partial t}\int_{{\Gamma_r}}\vert w(x,t)\vert^p dx = H(x,y),$$ where
	\begin{eqnarray*}
		H(x,y)&=&-\frac{p}{2}\iint_H J(x-y)\vert w(y,t)-w(x,t)\vert^{p-2}(w(y,t)-w(x,t))\\
		& & \qquad \qquad  \times(\vert w(y,t)\vert^{p-2}w(y,t)-\vert w(x,t)\vert^{p-2}w(x,t))dydx.
	\end{eqnarray*}
	Therefore, the $L^p({\Gamma_r})$-norm of $w(\cdot,t)$ is decreasing with $t$. Moreover, using Theorem \ref{main4},
	$$\int_{{\Gamma_r}}\vert w(x,t)\vert^pdx \leq \beta \iint_H J(x-y)\vert u(y,t)-u(x,t)\vert^p dydx.$$
	Consequently, 
	\begin{eqnarray*}
		t\int_{{\Gamma_r}}\vert w(x,t)\vert^p dx &\leq& \int_0^t\int_{{{\Gamma_r}}}\vert w(x,s)\vert^pdxdx\\
		&\leq& \beta\int_{0}^t \iint_H J(x-y)\vert u(y,s)-u(x,s)\vert^p dydxds.
	\end{eqnarray*}
	
	On the other hand, 
	$$\int_{{{\Gamma_r}}}\vert u(x,t)\vert^2dx-\int_{{{\Gamma_r}}} \vert u_0(x)\vert^2dx = -\int_0^t \iint_H J(x-y)\vert u(y,s)-u(x,s)\vert^pdydxds,$$
	which implies
	$$ \int_0^t\iint_H J(x-y))\vert u(y,s)-u(x,s)\vert^pdydxds\leq \Vert u_0\Vert_{L^2({\Gamma_r})}^2,$$
	and therefore we conclude that
	$$\int_{\Gamma_r} \vert w(x,t)\vert^pdx \leq \dfrac{C\Vert u_0\Vert_{L^2({\Gamma_r})}^2,}{t},$$
	where $C=C(J,p,\Omega_r,\Gamma_r)$.
\end{proof}

The previous theorem only deals with convergence in $L^p$-norm. 
In the following result, we give an extension  of this convergence taking into account other $L^q$-norms
and hence we have two different parameters, $q$ and $p$ (the exponent that appears in the equations).

\begin{theorem}\label{mainn}
	If $\beta$ as in Theorem \ref{main4} and $u(x,t)$ is the solution of the problem (P2), denoting by $w_0(x,t)=u_0(x)-\fint_{{{\Gamma_r}}} u_0$, for $1\leq q<\infty$ and $u_0\in L^q({\Gamma_r})$, 
	\begin{itemize}
		\item If $p>2$,
		$$\int_{{\Gamma_r}}\left\vert u(x,t)-\fint_{{{\Gamma_r}}}u_0\right\vert^q dx \leq \Vert w_0\Vert_{L^q({\Gamma_r})}^q\dfrac{C(p,q,{\Gamma_r},\beta)}{t^{\alpha}},$$
		where $\alpha=\displaystyle \frac{p+q-2}{q}-1>0$.
		\item If $p=2$,
		$$\int_{{{\Gamma_r}}}\left\vert u(x,t)-\fint_{{{\Gamma_r}}}u_0\right\vert^q dx
		\leq \Vert w_0\Vert_{L^q({\Gamma_r})}^qe^{-C'(p,{\Gamma_r},\beta)t}.$$
	\end{itemize}
\end{theorem}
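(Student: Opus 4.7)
I plan to set $w(x,t) = u(x,t) - \fint_{\Gamma_r} u_0$; by Proposition \ref{propmasa.p} the function $w$ has zero mean on $\Gamma_r$ for every $t\geq 0$ and solves $(P2)$ with initial datum $w_0$. The guiding idea is to derive a closed differential inequality for $\phi(t):=\int_{\Gamma_r}|w(x,t)|^q\,dx$. First I would multiply the second equation of $(P2)$ by $|w(x,t)|^{q-2}w(x,t)$ and integrate over $\Gamma_r$; adding the null contribution obtained from the first equation of $(P2)$ (the same symmetrisation argument that proves Lemma \ref{lem1}, now applied in the nonlinear setting with the extension $\hat w$ given by Lemma \ref{lema}) rewrites the right-hand side as a symmetric integral over $H=(\Omega_r\cup\Gamma_r)^2\setminus(\Gamma_r\times\Gamma_r)$:
\begin{equation*}
\frac{1}{q}\,\phi'(t) = -\frac{1}{2}\iint_H J(x-y)\,|\hat w(y,t)-w(x,t)|^{p-2}(\hat w(y,t)-w(x,t))\bigl(|\hat w(y,t)|^{q-2}\hat w(y,t) - |w(x,t)|^{q-2}w(x,t)\bigr)\,dy\,dx.
\end{equation*}

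Next I would use the pointwise inequality $(a-b)(|a|^{q-2}a-|b|^{q-2}b)\geq c_q|a-b|^q$ from \eqref{ineq.55}, multiplied through by $|a-b|^{p-2}$, to control the integrand from below by $c\,|\hat w(y,t)-w(x,t)|^{p+q-2}$. Then Theorem \ref{main4}, applied with exponent $p+q-2$ (the step that uses $r<R$ in an essential way), yields
\begin{equation*}
\phi'(t)\leq -C\beta\int_{\Gamma_r}|w(x,t)|^{p+q-2}\,dx.
\end{equation*}
Since $\Gamma_r$ has finite measure and $p+q-2\geq q$, Jensen's inequality gives $\int_{\Gamma_r}|w|^{p+q-2}\geq c|\Gamma_r|^{-(p-2)/q}\phi(t)^{(p+q-2)/q}$, so I obtain the closed scalar differential inequality $\phi'(t)\leq -K\phi(t)^{\gamma}$ with $\gamma=(p+q-2)/q$ and $K=K(p,q,\Gamma_r,\beta)$.

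If $p=2$ then $\gamma=1$, and Gronwall's inequality yields $\phi(t)\leq\|w_0\|_{L^q(\Gamma_r)}^q\,e^{-Kt}$, which is the claimed exponential-decay bound. If $p>2$ then $\gamma>1$, and integrating $\bigl(\phi^{1-\gamma}\bigr)'(t)\geq K(\gamma-1)$ from the initial condition $\phi(0)=\|w_0\|_{L^q(\Gamma_r)}^q$ produces the polynomial-decay estimate at the stated rate $\alpha = \gamma-1 = (p+q-2)/q - 1$. The main technical obstacles will be, first, justifying the symmetrised energy identity with the test function $|w|^{q-2}w$ --- which requires enough time regularity to move the derivative under the integral and uses Lemma \ref{lem1} in its nonlinear form --- and second, extending the argument to the range $1\leq q<2$, in which the pointwise inequality $(a-b)(|a|^{q-2}a-|b|^{q-2}b)\geq c|a-b|^q$ degenerates and must be replaced by $(a-b)(|a|^{q-2}a-|b|^{q-2}b)\geq c(|a|+|b|)^{q-2}(a-b)^2$ combined with a H\"older interpolation, ultimately producing the same structural ODE and hence the same decay rates.
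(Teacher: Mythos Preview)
Your approach coincides with the paper's: both derive the symmetrised energy identity over $H$, apply a pointwise inequality together with the Poincar\'e-type estimate of Theorem~\ref{main4}, and integrate the resulting scalar inequality $\phi'(t)\leq -K\,\phi(t)^{(p+q-2)/q}$. The only difference is cosmetic: the paper uses the inequality $|a-b|^{p-2}(a-b)\bigl(|a|^{q-2}a-|b|^{q-2}b\bigr)\geq C\,\bigl|a^{\gamma}-b^{\gamma}\bigr|^{p}$ with $\gamma=(p+q-2)/p$ (then invokes Theorem~\ref{main4} with exponent $p$), whereas you bound the integrand below by $c\,|a-b|^{p+q-2}$ and invoke Theorem~\ref{main4} with exponent $p+q-2$; the paper's choice is meant to cover the full range $1\leq q<\infty$ in one stroke, avoiding the separate treatment for $q<2$ that you flag.
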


\begin{proof}[Proof of Theorem \ref{mainn}]
For $p> 2$,	
	let $$w(x,t)=u(x,t)-\fint_{{{\Gamma_r}}}u_0.$$ Thus, $w(x,t)$ verifies $(P2)$ with $w(x,0)= w_0(x)$. We write $w(x):=w(x,t)$ and $w(y):=w(y,t)$. 
	
	On the one hand,
	\begin{eqnarray*}
		\frac{\partial}{\partial t}\left(\int_{{\Gamma_r}}\frac{\vert w(x)\vert^q}{q}dx\right) &=& \int_{{\Gamma_r}} \vert w(x)\vert^{q-2}w(x)w_t(x)dx\\[10pt]
		&=& \int_{{\Gamma_r}}\int_{\Omega_r} J(x-y)\vert w(y)-w(x)\vert^{p-2}(w(y)-w(x))\vert w(x)\vert^{q-2}w(x)dydx.
	\end{eqnarray*}
	
	On the other hand,
	$$
	\begin{array}{l}
	\displaystyle	0=\int_{\Omega_r\cup{{\Gamma_r}}}\int_{\Omega_r\cup{{\Gamma_r}}}J(x-y)\vert w(y,t)-w(x,t)\vert^{p-2}(w(y,t)-w(x,t))\vert w(x,t)\vert^{q-2}w(x,t)dydx
		\\[10pt]
		 \displaystyle \quad -\int_{{{\Gamma_r}}}\int_{{{\Gamma_r}}}J(x-y)\vert w(y,t)-w(x,t)\vert^{p-2}(w(y,t)-w(x,t))\vert w(x,t)\vert^{q-2}w(x,t)dydx\\[10pt]
		 \displaystyle\quad -\int_{{{\Gamma_r}}}\int_{ \Omega_r}J(x-y)\vert w(y,t)-w(x,t)\vert^{p-2}(w(y,t)-w(x,t))\vert w(x)\vert^{q-2}w(x,t)dydx\\[10pt]
		\displaystyle =-\frac{1}{2}\int_{\Omega_r \cup{{\Gamma_r}}}\int_{\Omega_r\cup{{\Gamma_r}}}J(x-y)\vert w(y,t)-w(x,t)\vert^{p-2}(w(y,t)-w(x,t) 
		\\[10pt]
		\displaystyle \qquad\qquad \qquad \qquad \times (\vert w\vert^{q-2}w(y,t)-\vert w\vert^{q-2}w(x,t))dydx\\[10pt]
		 \displaystyle \quad +\frac{1}{2}\int_{{{\Gamma_r}}}\int_{{{\Gamma_r}}}J(x-y)\vert w(y,t)-w(x,t)\vert^{p-2}(w(y,t)-w(x,t))
		 \\[10pt]
		\displaystyle \qquad \qquad \qquad \qquad \times(\vert w\vert^{q-2}w(y,t)-\vert w\vert^{q-2}w(x,t))dydx\\[10pt]
		 \displaystyle\quad  - \int_{{{\Gamma_r}}}\int_{ \Omega_r}J(x-y)\vert w(y,t)-w(x,t)\vert^{p-2}(w(y,t)-w(x,t))\vert w(x,t)\vert^{q-2}w(x,t)dydx\\[10pt]
		\displaystyle = - \frac{1}{2}\int_H J(x-y)\vert w(y,t)-w(x,t)\vert^{p-2}(w(y,t)-w(x,t))(\vert w\vert^{q-2}w(y,t)-\vert w\vert^{q-2}w(x,t))dydx\\[10pt]
		\displaystyle\quad  - \int_{{{\Gamma_r}}}\int_{ \Omega_r}J(x-y)\vert w(y,t)-w(x,t)\vert^{p-2}(w(y,t)-w(x,t))\vert w(x,t)\vert^{q-2}w(x,t)dydx.
	\end{array}
	$$
	
	Using now that, for $p\geq 2$ and $1\leq q<\infty$,
	\begin{eqnarray}\label{ineq}
	\vert a-b\vert^{p-2}(a-b)\left(\vert a\vert^{q-2}a -\vert b\vert^{q-2}b\right)\geq C\vert a^\gamma -b^\gamma\vert^p,
	\end{eqnarray}
	where $C=C(p,q)$ and $\gamma = \displaystyle \frac{p+q-2}{p}$,
 together with Theorem \ref{main4}, we obtain 
	\begin{eqnarray}\label{imp}
	\nonumber\frac{\partial}{\partial t}\left(\int_{{\Gamma_r}}\frac{\vert w(x,t)\vert^q}{q}dx\right) &\leq& -C(p,q)\beta\int_{{\Gamma_r}} \vert w(x,t)\vert^{p\gamma}dx 
	\\[10pt]
	\nonumber & = &-C(p,q)\beta\int_{{\Gamma_r}}\vert w(x,t)\vert^{q\frac{p+q-2}{q}}\\[10pt]
	&\stackrel{\text{H\"{o}lder}}{\leq}&-C(p,q,{\Gamma_r},\Omega_r, J)\left(\int_{{\Gamma_r}}\vert w(x,t)\vert^qdx\right)^{\frac{p+q-2}{q}}.
	\end{eqnarray}
	
	Integrating this differential inequality, we obtain that for $1\leq q<\infty$ and $p>2$,
	$$\int_{{\Gamma_r}}\left\vert w(x,t)\right\vert^qdx \leq  \Vert w_0\Vert_{L^q({\Gamma_r})}^q\dfrac{C(p,q,{\Gamma_r},\Omega_r, J)}{t^{\alpha}},$$
	where $\alpha=\displaystyle \frac{p+q-2}{q}-1$.
	
	The case $p=2$ was studied in Theorem \ref{mainp}.
\end{proof}

\section{Nonlocal nonlinear dynamical boundary conditions with a singular kernel}
\label{sec-4}

In this section we consider the same nonlocal problem but with a singular kernel with the same singularity that appears in the fractional $p-$Laplacian 
(see \cite{hich}) and study the following problem: for $0<s<1$,
\[
(P3)\begin{cases}
\displaystyle 0=\int_{\Omega_r\cup {{\Gamma_r}}}\frac{C(s)}{\vert x-y\vert^{n+ps}}\vert u(y,t)-u(x,t)\vert^{p-2}(u(y,t)-u(x,t))dy, & x\in\Omega_r, \, t>0,\\[10pt] 
\displaystyle \frac{\partial u}{\partial t}(x,t)=\int_{\Omega_r}\frac{C(s)}{\vert x-y\vert^{n+ps}}\vert u(y,t)-u(x,t)\vert^{p-2}(u(y,t)-u(x,t))dy & 
x\in{{\Gamma_r}}, \, t>0, \\[10pt] 
u(x,0)=u_0(x),  & x\in{{\Gamma_r}}.
\end{cases} 
\]

To deal with this problem, we consider the space
$$\mathbb X_{s,p}:=\Big\{ u\in L^p(\Omega_r\cup{\Gamma_r}) : \Vert u\Vert_{s,p}<+\infty\Big\},$$
where, 
$$\Vert u\Vert_{s,p}:= \left(\Vert u\Vert_{L^p({\Gamma_r})}^p+\iint_H\frac{C(s)}{\vert x-y\vert^{n+ps}}\vert u(y)-u(x)\vert^{p}dydx \right)^{1/p},$$
with, as before, $H=[(\Omega_r\cup{\Gamma_r})\times(\Omega_r\cup{\Gamma_r})\setminus ({\Gamma_r}\times{\Gamma_r})]$.

Again, we start solving the first equation in our problem.

\begin{lemma}\label{lemaa}
	Given $g$ in $\mathbb X_{s,p}$, there exists a unique $u\in \mathbb X_{s,p}$ solution of
	$$0=\int_{\Omega_r\cup{\Gamma_r}}\frac{C(s)}{\vert x-y\vert^{n+ps}}\vert u(y)-u(x)\vert^{p-2}(u(y)-u(x))dy,\qquad x\in\Omega_r,$$
	such that $u\equiv g$ in $\Gamma_r$.
\end{lemma}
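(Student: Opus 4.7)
The plan is to mimic the variational arguments used for the smooth-kernel Lemmas \ref{lem} and \ref{lema}, but working directly on the admissible class rather than via a linear substitution, since the $p$-Laplace type operator does not split linearly. I would consider the energy functional
\[
\mathcal{F}(u) := \frac{1}{2p}\iint_{(\Omega_r\cup\Gamma_r)^2} \frac{C(s)}{|x-y|^{n+ps}}|u(y)-u(x)|^p \, dy\, dx
\]
on the admissible class $\mathcal{A} := \{u \in \mathbb{X}_{s,p}: u\equiv g \text{ on } \Gamma_r\}$. The class $\mathcal{A}$ is non-empty because the trivial extension $\hat{g}$ (equal to $g$ on $\Gamma_r$ and $0$ on $\Omega_r$) belongs to $\mathbb{X}_{s,p}$ thanks to the hypothesis $g\in \mathbb{X}_{s,p}$. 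The Euler-Lagrange equation for $\mathcal{F}$ on $\mathcal{A}$, obtained by testing against variations $\varphi$ supported in $\Omega_r$, is exactly
\[
0 = \int_{\Omega_r\cup\Gamma_r}\frac{C(s)}{|x-y|^{n+ps}}|u(y)-u(x)|^{p-2}(u(y)-u(x))\, dy, \qquad x\in\Omega_r,
\]
so any minimizer furnishes the desired solution.

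Existence would follow from the direct method of the calculus of variations. Convexity of $t\mapsto |t|^p$ gives convexity of $\mathcal{F}$ on $\mathcal{A}$ and hence weak lower semicontinuity in $\mathbb{X}_{s,p}$. For coercivity, decomposing $u = v + \hat{g}$ with $v\equiv 0$ on $\Gamma_r$, one needs a fractional Poincaré-type inequality of the form
\[
\|v\|_{L^p(\Omega_r)}^p \;\leq\; C \iint_{H}\frac{C(s)}{|x-y|^{n+ps}}|v(y)-v(x)|^p\, dy\, dx
\]
for every $v\in\mathbb{X}_{s,p}$ vanishing on $\Gamma_r$; such an inequality is standard in the theory of fractional Sobolev spaces with zero exterior condition (see \cite{AMRT}, and the analog of \cite[Proposition 6.19]{AMRT} used in Theorem \ref{main4}). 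Combined with the elementary convexity bound $|u(y)-u(x)|^p \gtrsim |v(y)-v(x)|^p - |\hat g(y)-\hat g(x)|^p$, this yields a lower bound of the form $\mathcal{F}(u) \geq c \|v\|_{\mathbb{X}_{s,p}}^p - C_g$, giving the required coercivity of $\mathcal{F}$ on $\mathcal{A}$. A minimizing sequence then admits a weakly convergent subsequence whose limit lies in $\mathcal{A}$ (the Dirichlet condition on $\Gamma_r$ passes to the weak limit) and realizes the infimum.

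Uniqueness is the strict-convexity argument: if $u_1, u_2$ are two minimizers in $\mathcal{A}$, then $u_1-u_2$ vanishes on $\Gamma_r$, and strict convexity of $\mathcal{F}$ along the segment joining them (made rigorous using the elementary inequality $|a-b|^{p-2}(a-b)-|c-d|^{p-2}(c-d))\cdot ((a-c)-(b-d)) \geq 0$ with equality only when $(a-b)=(c-d)$) forces $u_1(y)-u_2(y) = u_1(x)-u_2(x)$ for almost every pair $(x,y)\in H$, which together with $u_1=u_2$ on $\Gamma_r$ and the Poincaré inequality above gives $u_1 \equiv u_2$ on $\Omega_r$.

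The main obstacle I anticipate is verifying the fractional Poincaré inequality on $\Omega_r$ for functions vanishing only on the strip $\Gamma_r$ (rather than on all of $\mathbb{R}^n\setminus\Omega_r$), which is why one works with the norm $\|\cdot\|_{s,p}$ integrated over $H$ rather than the full double integral. Once this inequality is in hand, the remaining steps are routine applications of the direct method and monotonicity.
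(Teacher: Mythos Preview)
Your approach is the same as the paper's: obtain $u$ as the minimizer of the Gagliardo energy over $\{u\in\mathbb X_{s,p}:u\equiv g\text{ on }\Gamma_r\}$, with uniqueness by strict convexity; the paper gives no further detail beyond pointing to Lemma~\ref{lema}. One small correction: since $\|\cdot\|_{s,p}$ only controls the double integral over $H$, the paper minimizes $\iint_H$ rather than $\iint_{(\Omega_r\cup\Gamma_r)^2}$, so your $\mathcal F$ as written could be identically $+\infty$ on $\mathcal A$ (and note that $g$ itself, not the zero extension $\hat g$, is the obvious element of $\mathcal A$); you essentially flag this in your final paragraph, and replacing the domain by $H$ fixes it without changing anything else.
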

\begin{proof}
	As in Lemma \ref{lema}, the existence of this element $u\in \mathbb X_{s,p}$ is given by the following minimization problem:
	$$\min_{u\equiv g\; \text{in}\; {\Gamma_r}, u\in\mathbb X_{s,p}}\frac{C(s)}{2p}\iint_H \frac{1}{\vert x-y\vert^{n+ps}}\vert u(y)-u(x)\vert^{p}dxdy.$$
\end{proof}

Notice that, thanks to this lemma, we can extend a function $g$ defined in $\Gamma_r$ to the whole $\Omega_r\cup \Gamma_r$ and, as before, 
we call this extension $\hat{u}$.

Now our aim is to apply Nonlinear Semigroup Theory (as we did in the previous section).
To this end, we consider the operator
$$B_p^s(u) (x)=-\int_{\Omega_r} \frac{C(s)}{\vert x-y\vert^{n+ps}}\vert \hat u(y)-u(x)\vert^{p-2}(\hat u(y)-u(x))dy,$$
where $\hat u(z)$ is defined as in Lemma \ref{lemaa}.

Now we can write our problem $(P3)$, as the following abstract Cauchy's problem:
\[
\begin{cases}
u_t (x,t) +B_p^s(u) (x,t)=0, & x\in{\Gamma_r},\, t>0, \\[10pt]
u(x,0)=u_0(x),& x\in{{\Gamma_r}}.
\end{cases} 
\]

The following two lemmas could be proved using the same techniques used in Lemmas \ref{lem1} and \ref{lem2}.

\begin{lemma}
	Given $u, v\in \mathbb X_{s,p}$,
	$$\int_{{{\Gamma_r}}} B_p^s(u(x))v(x)dx =\frac{1}{2}\iint_H \frac{C(s)}{\vert x-y\vert^{n+ps}}\vert \hat{u}(y)-u(x)\vert^{p-2}(\hat{u}(y)-u(x))(\hat v(y)-v(x))dydx.$$ 
\end{lemma}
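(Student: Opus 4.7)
The proof will mirror the argument of Lemma \ref{lem1}, exploiting the fact that the singular kernel $K(x,y):=C(s)|x-y|^{-n-ps}$ shares with $J(x-y)$ the two structural features used there: symmetry ($K(x,y)=K(y,x)$) and the vanishing first equation on $\Omega_r$ guaranteed, in this setting, by Lemma \ref{lemaa}.

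The plan is as follows. First I would unwind the definition of $B_p^s$ to obtain
\begin{equation*}
\int_{{\Gamma_r}} B_p^s(u)(x) v(x)\, dx = -\int_{{\Gamma_r}}\!\int_{\Omega_r} K(x,y)\,|\hat u(y)-u(x)|^{p-2}(\hat u(y)-u(x))\,v(x)\,dy\,dx,
\end{equation*}
which is the analogue of the quantity $\mathbb{Y}$ in Lemma \ref{lem1}. Next, since $\hat u$ solves the first equation of $(P3)$ pointwise in $\Omega_r$, for every $x\in \Omega_r$ we have $\int_{\Omega_r\cup \Gamma_r} K(x,y)|\hat u(y)-\hat u(x)|^{p-2}(\hat u(y)-\hat u(x))\,dy=0$. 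Multiplying by $\hat v(x)$ (the extension of $v$ given by Lemma \ref{lemaa}) and integrating over $\Omega_r$ produces the identity
\begin{equation*}
0=\iint_{(\Omega_r\cup \Gamma_r)\times \Omega_r} K(x,y)\,|\hat u(y)-\hat u(x)|^{p-2}(\hat u(y)-\hat u(x))\,\hat v(x)\,dy\,dx,
\end{equation*}
which, after relabelling the roles of $x$ and $y$, is the companion identity needed to symmetrize the expression.

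The key step is then the symmetrization. Writing the double integral over $(\Omega_r\cup \Gamma_r)^2$, subtracting the $\Gamma_r\times \Gamma_r$ piece (so that what remains is exactly the region $H$), and using the symmetry $K(x,y)=K(y,x)$ together with the antisymmetry of $|a-b|^{p-2}(a-b)$ under $(a,b)\mapsto (b,a)$, each of the two contributions
\begin{equation*}
\iint_{(\Omega_r\cup \Gamma_r)^2}\!K(x,y)\,|\hat u(y)-\hat u(x)|^{p-2}(\hat u(y)-\hat u(x))\,\hat v(x)\,dy\,dx,\quad \iint_{\Gamma_r\times \Gamma_r}\!\ldots
\end{equation*}
can be rewritten as $-\tfrac12$ times a fully symmetric double integral of $|\hat u(y)-\hat u(x)|^{p-2}(\hat u(y)-\hat u(x))(\hat v(y)-\hat v(x))$. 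Combining this with the vanishing identity above to eliminate the $(\Omega_r\cup \Gamma_r)^2$ piece gives the desired formula
\begin{equation*}
\int_{{\Gamma_r}} B_p^s(u)(x)v(x)\,dx = \frac{1}{2}\iint_H K(x,y)\,|\hat u(y)-u(x)|^{p-2}(\hat u(y)-u(x))(\hat v(y)-v(x))\,dy\,dx.
\end{equation*}

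The main difference with respect to Lemma \ref{lem1} is purely analytical: one has to justify the use of Fubini and the various splittings for the singular kernel. This is precisely why the lemma is stated for $u,v\in \mathbb{X}_{s,p}$; by the very definition of the $\|\cdot\|_{s,p}$ norm, the integrand $K(x,y)^{1/p'}|\hat u(y)-u(x)|^{p-1}$ lies in $L^{p'}(H)$ and $K(x,y)^{1/p}(\hat v(y)-v(x))$ lies in $L^{p}(H)$, so H\"older's inequality guarantees absolute integrability and all the manipulations above are legitimate. I do not anticipate further substantial difficulties.
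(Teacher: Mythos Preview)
Your proposal is correct and follows exactly the approach the paper intends: the paper itself does not give a separate proof here but simply states that the lemma ``could be proved using the same techniques used in Lemmas \ref{lem1} and \ref{lem2}'', and your argument is precisely the singular-kernel transcription of the proof of Lemma \ref{lem1}. The extra paragraph in which you justify the Fubini/splitting manipulations via H\"older and the definition of $\|\cdot\|_{s,p}$ is a worthwhile addition that the paper leaves implicit.
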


\begin{lemma}\label{lem3}
	Let $P:\mathbb R\rightarrow \mathbb R$ be a nondecreasing functions. Then,
	\begin{itemize}
		\item[a)] for every $u,v\in \mathbb X_{s,p}$ such that $P(u-v)\in \mathbb X_{s,p}$, we have
		\begin{eqnarray}
		\int_{{\Gamma_r}}(B_p^s(u)(x)-B_p^s(v)(x))P(u(x)-v(x))dx=A(x,y)
		\end{eqnarray}
		where
		\begin{eqnarray*}
			A(x,y)&=& \frac{1}{2}\int_H \frac{C(s)}{\vert x-y\vert^{n+ps}}(P(\hat{u}(y)-\hat{u}(y))-P(u(x)-v(x)))\\
			& & \qquad \qquad \times (\vert \hat{u}(y)-u(x)\vert^{p-2}(\hat{u}(y)-u(x))-\vert \hat{v}(y)-v(x)\vert^{p-2}(\hat{v}(y)-v(x))).
		\end{eqnarray*}
		\item[b)] If $P$ is bounded, \eqref{eq1} holds for every $u, v\in D(B_p^s)$. 
	\end{itemize}
\end{lemma}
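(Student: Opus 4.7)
The plan is to follow the pattern of Lemma \ref{lem2} from the smooth-kernel case, using as main input the integration-by-parts identity proved in the immediately preceding lemma, which expresses $\int_{\Gamma_r} B_p^s(u)\,\phi\,dx$ as a symmetric bilinear form over $H$ with weight $C(s)/|x-y|^{n+ps}$.

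For part (a), I would first write, by linearity,
\[
\int_{\Gamma_r}\bigl(B_p^s(u)-B_p^s(v)\bigr)P(u-v)\,dx = \int_{\Gamma_r} B_p^s(u)\,P(u-v)\,dx - \int_{\Gamma_r} B_p^s(v)\,P(u-v)\,dx.
\]
Now I would apply the preceding integration-by-parts lemma to each term, taking as test function $P(u-v)$ and using its natural extension $P(\hat u-\hat v)$ from $\Gamma_r$ to $\Omega_r\cup\Gamma_r$; the hypothesis $P(u-v)\in\mathbb X_{s,p}$ in part (a) (together with $u,v\in\mathbb X_{s,p}$ and Lemma \ref{lemaa} giving admissible extensions) ensures that this test function lies in $\mathbb X_{s,p}$ so the identity is legal. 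Each piece becomes a double integral over $H$ of $\Phi_p(\hat u(y)-u(x))(P(\hat u(y)-\hat v(y))-P(u(x)-v(x)))$ (respectively with $v$ in place of $u$ in the $\Phi_p$-factor), where $\Phi_p(z):=|z|^{p-2}z$.

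Subtracting the two resulting expressions and using the symmetry of the kernel $C(s)/|x-y|^{n+ps}$ and of the domain $H$ under the swap $x\leftrightarrow y$, I would symmetrize (the standard trick of averaging the integral with its $x\leftrightarrow y$ copy) to arrive at
\[
\tfrac12\iint_H \frac{C(s)}{|x-y|^{n+ps}}\bigl(\Phi_p(\hat u(y)-u(x))-\Phi_p(\hat v(y)-v(x))\bigr)\bigl(P(\hat u(y)-\hat v(y))-P(u(x)-v(x))\bigr)\,dy\,dx,
\]
which matches the claimed $A(x,y)$ (reading the apparent $P(\hat u(y)-\hat u(y))$ in the statement as $P(\hat u(y)-\hat v(y))$, as the context requires).

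For part (b), when $P$ is bounded the admissibility of the test function can be relaxed, exactly as in Lemma \ref{lem2}(b). The cleanest route is an approximation argument: pick nondecreasing truncations $P_n$ satisfying the stronger hypothesis of part (a), apply part (a) to $P_n$, and pass to the limit by dominated convergence, with majorant
\[
\frac{C(s)}{|x-y|^{n+ps}}\,2\|P\|_\infty\,\bigl|\Phi_p(\hat u(y)-u(x))-\Phi_p(\hat v(y)-v(x))\bigr|,
\]
which is integrable over $H$ because $u,v\in D(B_p^s)\subset\mathbb X_{s,p}$ (so both $\Phi_p(\hat u-u)$ and $\Phi_p(\hat v-v)$ produce integrable contributions against the singular kernel). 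The step I expect to be the main obstacle is precisely this integrability bookkeeping in part (b): unlike the smooth-kernel case of Lemma \ref{lem2}, where the bound is obvious because $J$ is bounded with compact support, here one must control the fractional kernel $|x-y|^{-n-ps}$ on the neighborhood of the diagonal, and one has to justify Fubini on $H$ carefully via a symmetrized H\"older inequality using $\|\cdot\|_{s,p}$. Once that integrability is secured, the symmetrization step is formal and the identity falls out of the preceding bilinear-form lemma.
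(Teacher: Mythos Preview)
Your proposal is correct and follows the same route as the paper: the paper gives no detailed proof of this lemma but simply says it ``could be proved using the same techniques used in Lemmas \ref{lem1} and \ref{lem2}'', and the proof of Lemma \ref{lem2} is exactly what you outline---split by linearity and apply the preceding bilinear-form identity with test function $P(u-v)$. One minor remark: the symmetrization step you describe is already contained in the preceding lemma (the $\tfrac12$ and the difference $\hat v(y)-v(x)$ are already there), so applying that lemma to each summand and subtracting gives $A$ directly, with no further averaging needed. Your treatment of part (b) via truncation and dominated convergence is more careful than the paper's one-line ``b) follows from a)'', and your identification of the integrability bookkeeping near the diagonal as the genuine extra content in the singular case is apt.
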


As before, with these two lemmas we can prove the following result,

\begin{theorem} \label{teo.river}
	In $L^2({\Gamma_r})$ the operator $B_p^s$ is completely accretive and satisfies the range condition $$L^2({\Gamma_r})\subset R(I+B_p^s).$$
	Moreover, $D(B_p^J)$ is dense in $L^2({\Gamma_r})$.
\end{theorem}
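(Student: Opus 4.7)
The plan is to follow the proof of Theorem \ref{main6} closely, substituting the smooth kernel $J(x-y)$ by the singular weight $C(s)|x-y|^{-n-ps}$ and taking $L^2(\Gamma_r)$ as the ambient Banach space. \emph{Complete accretivity} should be immediate from Lemma \ref{lem3}(b): for $u,v\in D(B_p^s)$ and $q\in P_0$, the integral $\int_{\Gamma_r}(B_p^s(u)-B_p^s(v))\,q(u-v)\,dx$ is rewritten as a double integral over $H$ whose integrand is the product of $q(\hat u-\hat v)(y)-q(u-v)(x)$ with
\[
|\hat u(y)-u(x)|^{p-2}(\hat u(y)-u(x))-|\hat v(y)-v(x)|^{p-2}(\hat v(y)-v(x)).
\]
Monotonicity of $q$ and of $t\mapsto |t|^{p-2}t$ make both factors share sign pointwise, giving non-negativity.

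For the \emph{range condition}, I would replicate the penalization argument of Theorem \ref{main6}. Starting with $\phi\in L^\infty(\Gamma_r)$, introduce
\[
A_{n,m}(u):=T_c(u)+B_p^s(u)+\tfrac{1}{n}|u|^{p-2}u^+-\tfrac{1}{m}|u|^{p-2}u^-.
\]
Because $\int_{\Gamma_r}B_p^s(u)\,u\,dx=\tfrac{1}{2}\iint_H C(s)|x-y|^{-n-ps}|\hat u(y)-u(x)|^p\,dy\,dx\geq 0$, the $\tfrac{1}{n},\tfrac{1}{m}$ terms provide coercivity of $A_{n,m}$ in $L^p(\Gamma_r)$, so \cite[Corollary 30]{B} yields $u_{n,m}$ with $A_{n,m}(u_{n,m})=\phi$. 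Monotonicity of the remainder forces $T_c(u_{n,m})\ll\phi$, and choosing $c>\Vert\phi\Vert_\infty$ drops the truncation; monotone passages $n\to\infty$ and $m\to\infty$ (standard under complete accretivity) then produce $u+B_p^s(u)=\phi$. For general $\phi\in L^2(\Gamma_r)$, approximate by $\phi_k\in L^\infty(\Gamma_r)$ in $L^2$ and exploit the $L^2$-contraction property of $(I+B_p^s)^{-1}$ coming from complete accretivity to pass to the limit.

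For \emph{density} of $D(B_p^s)$ in $L^2(\Gamma_r)$, the cleanest route uses the range condition itself: the resolvents $J_\lambda=(I+\lambda B_p^s)^{-1}$ are then defined on all of $L^2(\Gamma_r)$ and a standard Crandall--Liggett type argument shows $J_\lambda\phi\to\phi$ in $L^2$ as $\lambda\to 0^+$ on $\overline{D(B_p^s)}^{L^2}$, forcing this closure to be all of $L^2(\Gamma_r)$. Alternatively, one checks directly that every bounded $u$ on $\Gamma_r$ whose extension $\hat u$ (from Lemma \ref{lemaa}) belongs to $\mathbb X_{s,p}$ lies in $D(B_p^s)$, and such functions are dense in $L^2(\Gamma_r)$.

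The main obstacle I expect is the coercivity step in the singular-kernel regime. Unlike the smooth-$J$ case, the nonlocal energy $\iint_H|x-y|^{-n-ps}|\hat u(y)-u(x)|^p\,dy\,dx$ is not directly comparable to $\Vert u\Vert_{L^2(\Gamma_r)}^2$ when $p\neq 2$, so the interplay between the $L^p$-level penalization, the surjectivity call, and the $L^\infty$-to-$L^2$ approximation must be handled carefully: a fractional Poincaré-type inequality on $\Omega_r\cup\Gamma_r$ of the type invoked in Lemma \ref{lemaa} is likely needed to guarantee that the monotone limits $n,m\to\infty$ remain uniformly controlled and that the limit $u$ genuinely lies in $\mathbb X_{s,p}$ with $B_p^s(u)\in L^2(\Gamma_r)$.
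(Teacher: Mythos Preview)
Your treatment of complete accretivity via Lemma~\ref{lem3} matches the paper exactly, and your density argument via resolvents is essentially what the paper does (it shows $u_n=(I+\tfrac{1}{n}B_p^s)^{-1}v\to v$ in $L^2$ for $v\in\mathbb X_{s,p}$ by testing the weak formulation with $\phi=v-u_n$ and using Young's inequality).

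The genuine divergence is in the \emph{range condition}. You propose transplanting the $A_{n,m}$-penalization argument of Theorem~\ref{main6}; the paper instead abandons that route and argues variationally: for $\phi\in L^2(\Gamma_r)$ it directly minimizes
\[
u\longmapsto \frac{1}{2p}\iint_H\frac{C(s)}{|x-y|^{n+ps}}|u(y)-u(x)|^p\,dy\,dx+\frac12\int_{\Gamma_r}u^2-\int_{\Gamma_r}\phi\,u
\]
over $L^2(\Gamma_r)$. The quadratic term supplies $L^2$-coercivity, the fractional seminorm gives bounds in $\mathbb X_{s,p}$, compact embedding plus Fatou yield a minimizer, and its Euler--Lagrange equation is $(I+B_p^s)u=\phi$. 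This handles arbitrary $\phi\in L^2$ in one shot, with no truncation, no $n,m$ limits, and no $L^\infty$-to-$L^2$ approximation.

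The obstacle you flagged is real and is precisely why the paper switches methods. In the smooth case $B_p^J:L^p(\Gamma_r)\to L^{p'}(\Gamma_r)$ is a bounded continuous monotone operator, so Brezis' surjectivity theorem applies on the $L^p$--$L^{p'}$ duality. With the singular kernel, $B_p^s(u)$ is not a priori in $L^{p'}(\Gamma_r)$ for general $u\in L^p$; the natural duality is $\mathbb X_{s,p}$ versus its dual, and the penalization terms $\tfrac{1}{n}|u|^{p-2}u^+$ do not live there comfortably. One could likely force the $A_{n,m}$ scheme through by reworking it on $\mathbb X_{s,p}$, but the direct minimization is both shorter and avoids the mixed-exponent issues entirely.
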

\begin{proof}
	As in Theorem \ref{main6}, using Lemma \ref{lem3}, we obtain that the operator is completely accretive. 
	
	To show the range condition, we proceed as follows.
	For $\phi\in L^2({\Gamma_r})$, consider the following variational problem
	$$\min_{u\in L^2({\Gamma_r})}\underbrace{\frac{1}{2p}\iint_H \frac{C(s)}{\vert x-y\vert^{n+p s}}\vert u(y)-u(x)\vert^pdxdy}_{\mathcal D_p^s(u)} +\frac{1}{2}\int_{{\Gamma_r}} u^2(x) dx -\int_{{\Gamma_r}}\phi(x)u(x)dx,$$
	and prove that there is a unique solution. Take a minimizing sequence $u_n\in \mathbb X_{s,p}$. We have that
	$$\mathcal D_p^s(u_n)+\frac{1}{2}\int_{{\Gamma_r}}\vert u_n(x)\vert^pdx -\int_{{\Gamma_r}}\phi(x)u_n(x)\leq M,\qquad \forall n\in\mathbb N.$$
	By Young's inequality, we have
	\begin{eqnarray}\label{young}
	\mathcal D_p^s(u_n)+\frac{1}{4}\int_{{\Gamma_r}}\vert u_n(x)\vert^pdx\leq M+4\int_{{\Gamma_r}} \vert \phi \vert^p,\; \forall n\in\mathbb N.
	\end{eqnarray}
	Therefore, $\Vert u_n\Vert_{s,q}\leq K$, $\forall n\in\mathbb N$. Thus, by the compact embedding theorem, we can assume that $u_n \to u$ in $L^p({\Gamma_r})$ and $u\in \mathbb X_{s,p}$. Moreover, we have that $u_n$ is bounded in $L^2({\Gamma_r})$ and consequently, $u\in L^2({\Gamma_r})$. By Fatou's lemma, we deduce that $u$ is actually a minimizer of the variational problem. The uniqueness follows by the strict convexity of the functional. 
	
	To show the range condition, we just observe that the minimizer $u$ verifies
	$$
	(I+B_p^s)^{-1} \phi = u.
	$$
	To see this fact, take a function $v\in \mathbb X_{s,p}$. Then, 
	$$\varphi(t):=\mathcal D_p^s(u+tv)+\frac{1}{2}\int_{{\Gamma_r}}(u+tv)^2 -\int_{{\Gamma_r}}\phi(u+tv)$$
	has a minimum at $t=0$, and, consequently, $\varphi' (0)=0$. 
	Computing this derivative we obtain that $
	(I+B_p^s)u= \phi $ and hence the range condition follows.
	
	Finally, we show that $D(B_p^J)$ is dense in $L^2({\Gamma_r})$. To this end, it is enough to show that 
	$$\mathbb X_{s,p} \subset \overline{D(B_p^J)}^{L^2({\Gamma_r})}.$$
	So, let us take $v\in \mathbb{X}_{s,p}$. By the range condition and the accretiveness of the operator, there exists $u_n$ such that
	$$
	\begin{array}{l}
\displaystyle	\frac{1}{2}\iint_{H}\frac{C(s)}{\vert x-y\vert^{n+ps}}\vert u_n(y)-u_n(x)\vert^{p-2}(u_n(y)-u_n(x))(\phi(y)-\phi(x))dydx \\[10pt]
\qquad \displaystyle = n\int_{{\Gamma_r}}(v(x)-u_n(x))\phi(x)dx,
\end{array}
$$
	for all $\phi\in \mathbb X_{s,p}$. Taking $\phi = v-u_n$ and applying the Young's inequality,
	$$
	\begin{array}{l}
\displaystyle		\int_{{\Gamma_r}}(v(x)-u_n(x))^2dx \\[10pt]
\displaystyle =\frac{1}{2n}\iint_H \frac{C(s)}{\vert x-y\vert^{n+ps}}\vert u_n(y)-u_n(x)\vert^{p-2}(u_n(y)-u_n(x))(v(y)-v(x))dydx\\[10pt]
\displaystyle
\quad - \frac{1}{2n}\iint_H \frac{C(s)}{\vert x-y\vert^{n+ps}}\vert u_n(y)-u_n(x)\vert^pdydx\\[10pt]
		\displaystyle \leq \frac{1}{2n}\frac{1}{p'}\iint_H \frac{C(s)}{\vert x-y\vert^{n+ps}}\vert u_n(y)-u_n(x)\vert^pdydx\\[10pt]
		\displaystyle \quad + \frac{1}{2n}\frac{1}{p}\iint_H \frac{C(s)}{\vert x-y\vert^{n+ps}}\vert v(y)-v(x)\vert^pdydx\\[10pt]
		\displaystyle \quad - \frac{1}{2n}\iint_H \frac{C(s)}{\vert x-y\vert^{n+ps}}\vert u_n(y)-u_n(x)\vert^pdydx\\[10pt]
		\displaystyle \leq\frac{1}{2n}\iint_H \frac{C(s)}{\vert x-y\vert^{n+ps}}\vert v(y)-v(x)\vert^pdydx,
	\end{array}
	$$
	from which it follows that $u_n\rightarrow v$ in $L^2({\Gamma_r})$.
\end{proof}

Again, as an immediate consequence of this result, we obtain existence and uniqueness of a solution to our problem.

\begin{theorem}\label{main5}{\it
		Suppose $p>1$ and let $u_0\in L^2({\Gamma_r})$. Then, for any $t>0$, there exists a unique solution $u(x,t)\in\mathcal C([0,\infty); L^2(\Gamma_r))\cap W^{1,1}_{\text{loc}}((0,\infty);L^2({\Gamma_r})))$ to $(P3)$.}
\end{theorem}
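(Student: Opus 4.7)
The plan is to obtain Theorem \ref{main5} as an immediate corollary of the abstract nonlinear semigroup machinery, exactly as was done for Theorem \ref{main5.77} in the smooth kernel case. All the analytic work is already packaged in Theorem \ref{teo.river}: namely, that $B_p^s$ is completely accretive in $L^2(\Gamma_r)$, that it satisfies the range condition $L^2(\Gamma_r)\subset R(I+B_p^s)$, and that $D(B_p^s)$ is dense in $L^2(\Gamma_r)$. These are precisely the hypotheses required by \cite[Corollary A.52]{AMRT} to generate a strongly continuous contraction semigroup $S(t)$ on $L^2(\Gamma_r)$ whose orbits solve the abstract Cauchy problem
$$u_t(t) + B_p^s(u(t)) = 0, \qquad u(0)=u_0,$$
with $u(t) = S(t)u_0 \in \mathcal{C}([0,\infty); L^2(\Gamma_r))$ and $u_t \in L^2_{\mathrm{loc}}((0,\infty); L^2(\Gamma_r))$, hence in $W^{1,1}_{\mathrm{loc}}((0,\infty); L^2(\Gamma_r))$.

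First I would reformulate $(P3)$ in the abstract form above. The key observation is Lemma \ref{lemaa}: every function $u$ defined on $\Gamma_r$ admits a unique extension $\hat u$ on $\Omega_r\cup\Gamma_r$ realizing the first equation of $(P3)$. This allows one to \emph{absorb} the elliptic constraint on $\Omega_r$ into the definition of $B_p^s$ itself, so that the whole problem is genuinely an evolution on $L^2(\Gamma_r)$ alone; the equation on $\Omega_r$ is no longer a constraint but merely a rule selecting the integrand in $B_p^s(u)(x)$ for $x\in\Gamma_r$. Once this is done, invoking Theorem \ref{teo.river} and Corollary A.52 produces existence of a mild solution, and the contractivity property $\|S(t)u_0-S(t)v_0\|_{L^2(\Gamma_r)} \le \|u_0-v_0\|_{L^2(\Gamma_r)}$ inherited from complete accretivity gives uniqueness directly.

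The only point requiring minor care is the $W^{1,1}_{\mathrm{loc}}$ regularity for a general initial datum $u_0 \in L^2(\Gamma_r)$ rather than $u_0 \in D(B_p^s)$. Here the gradient-flow structure is crucial: the resolvent $(I+B_p^s)^{-1}$ was built in the proof of Theorem \ref{teo.river} as the minimizer of
$$\frac{1}{2p}\iint_H \frac{C(s)}{\vert x-y\vert^{n+ps}}\vert u(y)-u(x)\vert^p\, dy\, dx + \frac{1}{2}\int_{\Gamma_r} u^2 - \int_{\Gamma_r}\phi u,$$
so that $B_p^s$ is the subdifferential (in $L^2(\Gamma_r)$) of a proper, convex, lower semicontinuous functional. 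Brezis' regularizing effect for gradient flows then upgrades the mild solution to a strong one for every $u_0$ in the closure of the domain, which coincides with $L^2(\Gamma_r)$ by the density statement of Theorem \ref{teo.river}. The main obstacle, if any, is thus purely bookkeeping: one must verify that the abstract semigroup result being cited indeed delivers the $W^{1,1}_{\mathrm{loc}}$ class and not merely continuity — something that follows from the Hilbert space setting together with the subdifferential structure.
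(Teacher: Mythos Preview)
Your proposal is correct and follows essentially the same route as the paper: invoke Theorem \ref{teo.river} (complete accretivity, range condition, density of the domain) and apply \cite[Corollary A.52]{AMRT} to obtain the strong solution with the stated regularity. The paper's own proof is a single sentence to this effect; your additional remarks on the subdifferential structure and Brezis' regularizing effect are accurate elaborations but not strictly needed, since Corollary A.52 already packages the upgrade from mild to strong solution.
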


\begin{proof} Using now \cite[Corollary A.52]{AMRT}, for any $T>0$, there exists a solution of problem $(P3)$ with $u(x,t)\in \mathcal C([0,T); L^2({\Gamma_r}))\cap  W^{1,1}_{\text{loc}}((0,T);L^2({\Gamma_r})))$, 
for $t\in (0,T)$.
\end{proof}

We omit the proof of the following proposition since the same technique used in the proof of Proposition \ref{propmasa} also works here.
\begin{proposition}
	For all $t>0$,
	$$\int_{{\Gamma_r}} u(x,t)dx = \int_{{{\Gamma_r}}} u_0(x)dx.$$
\end{proposition}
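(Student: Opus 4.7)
The plan is to mimic exactly the strategy used in Proposition \ref{propmasa}, with the only delicate point being that Fubini's theorem must be invoked against the singular kernel $C(s)/|x-y|^{n+ps}$. Since Theorem \ref{main5} provides a solution $u(\cdot,t) \in \mathbb{X}_{s,p}$ with $u_t \in L^1_{\mathrm{loc}}((0,\infty);L^2(\Gamma_r))$, differentiation under the integral sign against $dx$ on $\Gamma_r$ is justified, and moreover the finiteness of the Gagliardo-type seminorm $\|u(\cdot,t)\|_{s,p}$ guarantees that all the double integrals below are absolutely convergent so that Fubini applies.

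First I would integrate the second equation of $(P3)$ over $\Gamma_r$ to get
\begin{equation*}
\frac{d}{dt}\int_{\Gamma_r}u(x,t)\,dx
= \int_{\Gamma_r}\int_{\Omega_r}\frac{C(s)}{|x-y|^{n+ps}}|u(y,t)-u(x,t)|^{p-2}(u(y,t)-u(x,t))\,dy\,dx.
\end{equation*}
Next, I would integrate the first equation of $(P3)$ (which holds pointwise a.e.\ in $\Omega_r$) over $\Omega_r$, and split the inner integral according to whether $y\in\Omega_r$ or $y\in\Gamma_r$:
\begin{equation*}
0 = \underbrace{\int_{\Omega_r}\!\!\int_{\Omega_r}\frac{C(s)}{|x-y|^{n+ps}}|u(y,t)-u(x,t)|^{p-2}(u(y,t)-u(x,t))\,dy\,dx}_{(A)} + \underbrace{\int_{\Omega_r}\!\!\int_{\Gamma_r}\frac{C(s)}{|x-y|^{n+ps}}|u(y,t)-u(x,t)|^{p-2}(u(y,t)-u(x,t))\,dy\,dx}_{(B)}.
\end{equation*}

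The key symmetry step is that $(A)=0$: the kernel $C(s)/|x-y|^{n+ps}$ is symmetric in $(x,y)$ while the integrand $|u(y)-u(x)|^{p-2}(u(y)-u(x))$ is antisymmetric, so exchanging the names of the integration variables yields $(A)=-(A)$. Applying Fubini to $(B)$ and again swapping $x\leftrightarrow y$ (now on a non-symmetric domain $\Omega_r\times\Gamma_r\mapsto\Gamma_r\times\Omega_r$) gives
\begin{equation*}
(B) = -\int_{\Gamma_r}\int_{\Omega_r}\frac{C(s)}{|x-y|^{n+ps}}|u(y,t)-u(x,t)|^{p-2}(u(y,t)-u(x,t))\,dy\,dx,
\end{equation*}
so $(B)$ is exactly the negative of the right-hand side of the first displayed identity. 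Therefore $\frac{d}{dt}\int_{\Gamma_r}u(x,t)\,dx=0$, and integrating in time yields the conservation of mass.

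The main obstacle I expect is justifying the manipulations with the singular kernel. Unlike the smooth-kernel case of Proposition \ref{propmasa}, the integrand $\tfrac{C(s)}{|x-y|^{n+ps}}|u(y)-u(x)|^{p-2}(u(y)-u(x))$ is only conditionally integrable if one separates the pieces; one needs to work with $|u(y)-u(x)|^{p-1}$ in absolute value and check that $\iint_{H}\tfrac{|u(y)-u(x)|^{p-1}}{|x-y|^{n+ps}}\,dy\,dx<\infty$ (by H\"older with exponents $p/(p-1)$ and $p$, using $\|u\|_{s,p}<\infty$ and the integrability of $|x-y|^{-(n+ps)/p}$ on bounded sets when $s<1$). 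Once this integrability is in hand, every application of Fubini and the differentiation in $t$ is legitimate and the argument closes as above.
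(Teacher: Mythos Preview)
Your argument is correct and is exactly the approach the paper intends: the paper omits the proof and simply remarks that the technique of Proposition~\ref{propmasa} carries over verbatim, which is precisely what you do. Your extra care in justifying Fubini against the singular kernel (via the H\"older bound on $\iint_H |u(y)-u(x)|^{p-1}/|x-y|^{n+ps}\,dy\,dx$ using $\|u\|_{s,p}<\infty$) is a point the paper leaves implicit, so your write-up is in fact more complete than the original.
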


\subsection{Asymptotic behaviour}
Now, we deal with the asymptotic behaviour of the solutions.

\begin{theorem}
	Let $q\geq p$ and $u(x,t)$ the solution of the Problem $(P3)$. For an initial datum $u_0\in L^\infty({\Gamma_r})$ if $q>p$ and $u_0\in L^2({\Gamma_r})$ if $q=p$, the $L^q$-norm of the solution goes to zero as $t\rightarrow \infty$ since we have the following estimate:
	$$\left\| u(\cdot,t)-\fint_{{{\Gamma_r}}}u_0\right\|_{L^q({\Gamma_r})}^q\leq C\dfrac{\Vert u_0-\fint_{{{\Gamma_r}}}u_0\Vert_{L^\infty({\Gamma_r})}^{q-p}\Vert u_0\Vert_{L^2({\Gamma_r})}^2}{t},\; \forall t>0,$$
	where $C=C({\Gamma_r},\Omega_r, n, s,p)$.
\end{theorem}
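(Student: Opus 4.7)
The plan is to mimic the strategy of Theorem \ref{mainn2}, replacing the smooth kernel by the singular one and handling the $q>p$ case by an $L^\infty$-interpolation. First set $w(x,t)=u(x,t)-\fint_{\Gamma_r}u_0$; since the differences $u(y,t)-u(x,t)$ that appear in $(P3)$ are invariant under the addition of a constant, $w$ solves $(P3)$ with initial datum $w_0=u_0-\fint_{\Gamma_r}u_0$, and by the conservation of mass $\fint_{\Gamma_r}w(\cdot,t)=0$ for every $t>0$.

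For the base case $q=p$ I would combine three ingredients. Firstly, $\|w(\cdot,t)\|_{L^p(\Gamma_r)}$ is non-increasing: this follows either from multiplying the second equation of $(P3)$ for $w$ by $|w|^{p-2}w$ and using the integration-by-parts identity in Lemma \ref{lem3}(a) with $P(s)=|s|^{p-2}s$, or directly from the fact that the semigroup generated by the completely accretive operator $B_p^s$ of Theorem \ref{teo.river} is contractive in every $L^r$. Secondly, a Poincaré-type inequality analogous to Theorem \ref{main4} for the singular kernel,
\[
\int_{\Gamma_r}|v|^p\,dx\leq C\iint_H\frac{C(s)}{|x-y|^{n+ps}}|\hat v(y)-v(x)|^p\,dy\,dx\qquad\text{for every }v\in L^p(\Gamma_r)\text{ with }\fint_{\Gamma_r}v=0,
\]
which I would obtain by a contradiction argument along the lines of Theorem \ref{main4}, using the standard fractional Poincaré inequality on $\Omega_r$ in place of \cite[Proposition 6.19]{AMRT}; crucially, no restriction $r<R$ is needed since $1/|x-y|^{n+ps}$ is strictly positive, and bounded below on the compact set $\overline{\Gamma_r}\times\overline{\Omega_r}$. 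Thirdly, testing $(P3)$ against $u$ itself yields the energy identity
\[
\frac{1}{2}\int_{\Gamma_r}u^2(x,t)\,dx+\frac{1}{2}\int_0^t\iint_H\frac{C(s)}{|x-y|^{n+ps}}|\hat u(y,s)-u(x,s)|^p\,dy\,dx\,ds=\frac{1}{2}\int_{\Gamma_r}u_0^2\,dx,
\]
so that $\int_0^t\iint_H K(x,y)|\hat u(y,s)-u(x,s)|^p\,dy\,dx\,ds\leq\|u_0\|_{L^2(\Gamma_r)}^2$; since $u-w$ is a constant, the same bound holds for $\hat w,w$ in place of $\hat u,u$. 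Combining monotonicity, Poincaré and the energy estimate,
\[
t\,\|w(\cdot,t)\|_{L^p(\Gamma_r)}^p\leq\int_0^t\|w(\cdot,s)\|_{L^p(\Gamma_r)}^p\,ds\leq C\int_0^t\iint_H K|\hat w-w|^p\,dy\,dx\,ds\leq C\,\|u_0\|_{L^2(\Gamma_r)}^2,
\]
which is the statement for $q=p$, noting that $\|w_0\|_{L^\infty(\Gamma_r)}^{q-p}=1$ in that case.

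For $q>p$ I would interpolate against the $L^\infty$-contraction of the semigroup. Because $B_p^s$ is completely accretive (Theorem \ref{teo.river}), the evolution it generates is non-expansive in every rearrangement-invariant norm; in particular $\|w(\cdot,t)\|_{L^\infty(\Gamma_r)}\leq\|w_0\|_{L^\infty(\Gamma_r)}$ for every $t>0$. Hence
\[
\|w(\cdot,t)\|_{L^q(\Gamma_r)}^q\leq\|w(\cdot,t)\|_{L^\infty(\Gamma_r)}^{q-p}\,\|w(\cdot,t)\|_{L^p(\Gamma_r)}^p\leq\|w_0\|_{L^\infty(\Gamma_r)}^{q-p}\,\frac{C\,\|u_0\|_{L^2(\Gamma_r)}^2}{t},
\]
which is exactly the asserted bound.

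The main obstacle I anticipate is the singular-kernel Poincaré inequality in the second ingredient: the proof of Theorem \ref{main4} used the compact support of $J$ only through the strict positivity of $\int_{\Omega_r}J(x-y)\,dy$ for $x\in\Gamma_r$ under the assumption $r<R$, and this step has to be replaced by a direct comparison with the classical fractional Poincaré inequality on $\Omega_r$ together with the uniform positive lower bound on $1/|x-y|^{n+ps}$ over $\overline{\Gamma_r}\times\overline{\Omega_r}$. Justifying the pointwise chain rule for $\frac{d}{dt}\int|w|^p$ at the level of the abstract mild solution produced by Theorem \ref{main5} is a minor technical point handled by the $W^{1,1}_{\text{loc}}((0,\infty);L^2(\Gamma_r))$ regularity of the strong solution.
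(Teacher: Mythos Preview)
Your proposal is correct and follows essentially the same route as the paper: both rely on the singular-kernel Poincar\'e inequality (the paper simply asserts that Theorem~\ref{main4} extends to this kernel), the $L^2$ energy identity yielding $\int_0^t\iint_H\cdots\le\|u_0\|_{L^2}^2$, monotonicity of the $L^r$-norms coming from complete accretivity, and the $L^\infty$ contraction to pass from $p$ to $q$. The only cosmetic difference is the order of the last two steps---you first establish the $q=p$ estimate and then interpolate with the $L^\infty$ bound, whereas the paper inserts the $L^\infty$ bound before integrating in time and uses $L^q$-monotonicity directly; the ingredients and the outcome are identical.
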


\begin{proof}
	Let $$w(x,t)=u(x,t)-\fint_{{{\Gamma_r}}}u_0.$$ 
	Since ${\Gamma_r}$ is bounded, in Theorem \ref{main4} we can replace $J(x-y)$ by the singular kernel and using that the solution preserves the total mass, we have that there exists a positive constant $C=C(n, s, \Omega_r, \Gamma_r)$ such that
	$$\int_{{\Gamma_r}}\vert w(x,t)\vert^pdx \leq C\iint_{H}\dfrac{\vert u(y,t)-u(x,t)\vert^p}{\vert x-y\vert^{n+sp}}dydx.$$ Hence,
	$$\int_{{{\Gamma_r}}}\vert w(x,t)\vert^qdx \leq C\Vert w_0\Vert_{L^{\infty}({\Gamma_r})}^{q-p}\iint_{H}\dfrac{\vert u(y,t)-u(x,t)\vert^p}{\vert x-y\vert^{n+sp}}dydx.$$
	
	Since the $L^q({\Gamma_r})$-norm of $u(\cdot,t)$ is decreasing  with $t$ (this fact can be proved as in Theorem \ref{mainn2}),
	\begin{eqnarray*}
		t\int_{{\Gamma_r}}\vert w(x,t)\vert^q &\leq& \int_0^t\int_{{\Gamma_r}}\vert w(x,s)\vert^q dxds\\
		&\leq& C\Vert w_0\Vert_{L^\infty({\Gamma_r})}^{q-p}\int_0^t\iint_{H}\dfrac{\vert u(y,t)-u(x,t)\vert^p}{\vert x-y\vert^{n+sp}}dydxds.
	\end{eqnarray*}
	
	As $$\frac{1}{2}\iint_{H}\dfrac{\vert u(y,t)-u(x,t)\vert^p}{\vert x-y\vert^{n+sp}}dydx=-\int_{{{\Gamma_r}}} \frac{\partial u}{\partial t}(x,t)u(x,t)dx,$$ integrating in space and time, we get
	$$\int_0^t\iint_{H} \dfrac{\vert u(y,t)-u(x,t)\vert^p}{\vert x-y\vert^{n+sp}}dydxds =\int_{{\Gamma_r}}\vert u_0(x)\vert^2dx-\int_{{\Gamma_r}}\vert u(x,t)\vert^2\leq \Vert u_0\Vert_{L^2({\Gamma_r})}^2.$$
	Thus,
	$$\int_{{\Gamma_r}}\vert w(x,t)\vert^q dx \leq C\dfrac{\Vert w_0\Vert_{L^\infty({\Gamma_r})}^{q-p}\Vert u_0\Vert_{L^2({\Gamma_r})}^2}{t},$$
	with $C=(\Gamma_r, \Omega_r, n, s, p)$.
\end{proof}

\section{Appendix}\label{app}
In this Appendix, we gather the main definitions and results about completely accretive operators that we have used in the previous sections.

Let $(\Gamma, \mathcal B, \mu)$ be a $\sigma$-finite measure space and let $M(\Gamma)$ denote the space of measurable functions from $\Gamma$ into $\mathbb R$. We denote by $L(\Gamma)$ the space $L^1(\Gamma)+L^\infty(\Gamma)$ and $$L_0(\Gamma):= \lbrace u\in L(\Gamma): \mu(\lbrace \vert u\vert>k\rbrace)<\infty\; \text{for any}\; k>0\rbrace.$$

For $u, v\in M(\Gamma)$, we write
$$u\ll v\; \text{if and only if}\; \int_\Gamma j(u)dx \leq \int_\Gamma j(v)dx,$$
for all $j\in J_0$, where
$$J_0=\lbrace j: \mathbb R\rightarrow [0,\infty],\; \text{convex},\, \text{l.s.c.},\, j(0)=0\rbrace,$$
where l.s.c in an abbreviation for lower semicontinuous.

Let $A$ be an operator in $M(\Gamma)$. We say that $A$ is completely accretive if
$$u-\hat u\ll u-\hat u+\lambda(u-\hat v),\; \forall \lambda>0\; \text{and all}\; (u,v),(\hat u, \hat v)\in A.$$
Let $$P_0 = \lbrace q\in \mathcal C^\infty(\mathbb F): 0\leq q'\leq 1,\; supp(q')\; \text{is compact and}\; 0\not\in supp(q)\rbrace.$$ The following result provides a useful characterization of the complete accretivity.

\begin{proposition}{\cite[Proposition A.42]{AMRT}}
	Let $u_0\in L_0(\Gamma)$, $v\in L(\Gamma)$. Then, 
	$$u\ll u+\lambda u,\; \forall \lambda>0 \Longleftrightarrow \int_\Gamma q(u)v\geq 0,\; \forall q\in P_0.$$
\end{proposition}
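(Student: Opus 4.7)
The plan is to prove both implications by exploiting the correspondence between convex functions $j\in J_0$ and their derivatives, which are exactly (modulo scaling and limits) the class $P_0$. I note first that the statement should read $u \ll u + \lambda v$ (not $u + \lambda u$) on the left, since otherwise the condition is trivial.

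For the forward direction $(\Rightarrow)$, I would take $q\in P_0$ and define $j(s):=\int_0^s q(t)\,dt$. Since $q$ is smooth and nondecreasing with $q(0)=0$ (as $0\notin\operatorname{supp}(q)$), $j$ is $C^1$, convex, satisfies $j(0)=0$, and is nonnegative (because $q\ge 0$ on $[0,\infty)$ and $q\le 0$ on $(-\infty,0]$). Hence $j\in J_0$. By hypothesis $\int j(u+\lambda v)\,dx \geq \int j(u)\,dx$ for every $\lambda>0$, so
\[
\frac{1}{\lambda}\int \bigl(j(u+\lambda v)-j(u)\bigr)\,dx \;\geq\; 0.
\]
The integrand converges pointwise to $q(u)v$ as $\lambda\to 0^+$, and the difference quotient is dominated by $\|q\|_\infty |v|$. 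Writing $v=v_1+v_2$ with $v_1\in L^1$, $v_2\in L^\infty$, I use $v_1\in L^1$ directly and observe that $q(u)$ is supported in $\{|u|\ge\delta\}$ for some $\delta>0$, a set of finite measure because $u\in L_0(\Gamma)$; hence $q(u)v_2\in L^1$ and dominated convergence yields $\int q(u)v\,dx\ge 0$.

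For the converse $(\Leftarrow)$, fix $j\in J_0$ and $\lambda_0>0$ and define $g(\lambda):=\int j(u+\lambda v)\,dx$ on $[0,\lambda_0]$. By convexity of $j$, $g$ is convex, so it is nondecreasing on $[0,\lambda_0]$ as soon as $g'_+(0)\geq 0$; therefore it suffices to show the latter. Passing to the limit inside the integral (justified again by monotonicity of the difference quotient in $\lambda$ together with the $L^1+L^\infty$ splitting of $v$, at least for smooth $j$; general $j\in J_0$ is handled by standard Moreau--Yosida regularization), one obtains $g'_+(0)=\int j'(u)v\,dx$. Since $j\ge 0=j(0)$, the derivative $j'$ is a nondecreasing function with $j'\le 0$ on $(-\infty,0)$ and $j'\ge 0$ on $(0,\infty)$, which permits the layer-cake decomposition
\[
j'(s)=\int_0^{\infty}\chi_{\{j'\ge r\}}(s)\,dr-\int_0^{\infty}\chi_{\{j'\le -r\}}(s)\,dr.
\]
Each characteristic function on the right is that of a half-line bounded away from $0$, so it is the pointwise increasing limit of elements of $P_0$ (respectively, their negatives) obtained by smoothing the jump. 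The hypothesis $\int q(u)v\ge 0$ together with monotone convergence passes through these approximations, yielding $\int_{\{u\ge s_r\}}v\ge 0$ and $\int_{\{u\le s'_r\}}v\le 0$ for the relevant thresholds, which assembled via Fubini give $\int j'(u)v\,dx\ge 0$. Combining with the first paragraph finishes the implication.

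The main obstacle I anticipate is the last step: justifying the differentiation under the integral sign for arbitrary $j\in J_0$ (where $j$ may fail to be differentiable at some points and where $j'$ may be unbounded), and then legitimately approximating $j'$ by $P_0$-functions without losing contact with the hypothesis near the threshold $s=0$, where $P_0$ functions are required to vanish. The $L_0$ hypothesis on $u$ is exactly what prevents mass from escaping to the region where this approximation degenerates.
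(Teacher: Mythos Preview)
The paper does not prove this proposition: it is quoted verbatim from \cite[Proposition~A.42]{AMRT} as a background result in the Appendix, with no argument supplied. So there is no ``paper's own proof'' to compare against.

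That said, your outline is the standard one and is correct in its essentials. You rightly note the typo (the left side must read $u\ll u+\lambda v$). For $(\Rightarrow)$, building $j(s)=\int_0^s q$ and differentiating the inequality at $\lambda=0^+$ is exactly the intended mechanism; the domination you need is slightly more than what you wrote, since the difference quotient $(j(u+\lambda v)-j(u))/\lambda$ is not obviously supported in $\{|u|\ge\delta\}$. The clean fix is to observe that $j\equiv 0$ on $(-\delta,\delta)$ (because $q$ vanishes there), so for small $\lambda$ the integrand is supported in $\{|u|\ge\delta/2\}\cup\{|v_1|\ge\delta/(2\lambda)\}$, both of finite measure, and then the bound $\|q\|_\infty|v|$ suffices. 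For $(\Leftarrow)$, your reduction to $g'_+(0)\ge 0$ via convexity and the layer-cake representation of $j'$ as a superposition of (signed) indicators of half-lines avoiding $0$, each approximable in $P_0$, is precisely the argument in \cite{AMRT}; the $u\in L_0(\Gamma)$ hypothesis is used, as you say, to keep all the intermediate integrals finite.

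In short: the proposal is sound, matches the approach in the cited reference, and the only point deserving a line of extra care is the support/domination argument in the forward direction.
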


\begin{corollary}{\cite[Corollary A.43]{AMRT}}
	If $A\subseteq L^p(\Gamma)\times L^p(\Gamma)$, $1\leq p<\infty$, then A is completely accretive if and only if
	$$\int_\Gamma q(u-\hat u)(v-\hat v)\geq 0,\; \text{for any}\; q\in P_0, (u,v),(\hat u,\hat v)\in A.$$
\end{corollary}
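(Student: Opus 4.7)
The plan is to reduce the corollary to a direct application of the preceding proposition (Proposition A.42), which is the scalar ``one point'' characterization of when $u\ll u+\lambda v$ uniformly in $\lambda>0$ in terms of integrals against test functions $q\in P_0$.

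First I would unpack the definition: $A$ is completely accretive precisely when, for every pair $(u,v),(\hat u,\hat v)\in A$ and every $\lambda>0$, one has $U\ll U+\lambda V$, where $U:=u-\hat u$ and $V:=v-\hat v$. So the task reduces to showing that this family of scalar $\ll$-inequalities (for fixed pairs, quantified over $\lambda>0$, and then over all pairs in $A$) is equivalent to the stated integral condition, which itself factors pair-by-pair.

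Next I would invoke the proposition with $U$ and $V$ in the roles of $u$ and $v$. The only nontrivial input is to verify that $U\in L_0(\Gamma)$ and $V\in L(\Gamma)$. The second is immediate, since $V\in L^p(\Gamma)\subseteq L^1(\Gamma)+L^\infty(\Gamma)=L(\Gamma)$. For the first, Chebyshev's inequality gives
\[
\mu(\{|U|>k\})\leq k^{-p}\,\|U\|_{L^p}^{p}<\infty \quad\text{for every } k>0,
\]
which is exactly the defining property of $L_0(\Gamma)$; this is the one place where the hypothesis $p<\infty$ is essentially used. Once the hypotheses are checked, the proposition yields, for each fixed pair,
\[
U\ll U+\lambda V \text{ for all } \lambda>0 \Longleftrightarrow \int_\Gamma q(U)\,V\geq 0 \text{ for all } q\in P_0,
\]
and quantifying the right-hand side over all pairs $(u,v),(\hat u,\hat v)\in A$ produces exactly the equivalence claimed in the corollary.

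I do not anticipate a substantive obstacle: all the analytic content (approximating indicator-type cut-offs by smooth elements of $P_0$ and by convex functions in $J_0$) has already been absorbed into Proposition A.42. The only technicality on this side is the embedding $L^p(\Gamma)\subseteq L_0(\Gamma)$, which is precisely why the statement excludes $p=\infty$.
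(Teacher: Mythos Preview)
Your argument is correct: the corollary is an immediate consequence of Proposition~A.42 applied to $U=u-\hat u$ and $V=v-\hat v$, once one checks via Chebyshev that $L^p(\Gamma)\subset L_0(\Gamma)$ for $p<\infty$. Note, however, that the paper does not supply its own proof of this statement at all---it merely quotes it from \cite[Corollary~A.43]{AMRT} in the appendix---so there is no in-paper argument to compare against; your derivation is the standard one and exactly what the cited reference does.
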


\begin{corollary}{\cite[Corollary A.52]{AMRT}}
Suppose $\mu(\Gamma)<\infty$. If an operator $A\subseteq L^1(\Gamma)\times L^1(\Gamma)$ is an m-completely accretive operator in $L^1(\Gamma)$, then for every $u_0\in D(A)$, the mild solution $u(t)=e^{-tA}u_0$ of the problem
\[
\begin{cases}
\displaystyle 0\in \frac{du}{dt}+Au,\\[10pt]
u(0)=u_0,
\end{cases}
\]
is a strong solution, where $u$ is called a strong solution of an abstract Cauchy problem
\[
(CP)_{f,x}\begin{cases}
\displaystyle f(t)\in \frac{du}{dt} (t)+Au(t), & \text{on}\; t\in (0,T),\\[10pt]
u(0)=x,
\end{cases}
\]
where $f:(0,T)\rightarrow X$ and $A$ is an operator in $X$ with $X$ a Banach space, if 
\[
\begin{cases}
u\in\mathcal C([0,T]; X))\cap W_{loc}^{1,1}((0,T); X),\\[10pt]
\displaystyle f(t)\in \frac{du}{dt} +Au(t),& \text{a.e.}\; t\in (0,T),\\[10pt]
u(0)=x.
\end{cases}
\]
\end{corollary}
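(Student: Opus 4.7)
The plan is to leverage standard nonlinear semigroup theory in three steps: first construct the mild solution via Crandall--Liggett, then upgrade the $L^1$-Lipschitz orbit to an a.e.\ differentiable curve by exploiting complete accretivity and $\mu(\Gamma)<\infty$ to access a reflexive ambient space, and finally identify the almost-everywhere derivative via a Yosida-approximation passage to the limit.

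For the first step, m-accretivity of $A$ in $L^1(\Gamma)$ activates the Crandall--Liggett exponential formula, producing the contraction semigroup $e^{-tA}$ on $\overline{D(A)}$. For $u_0\in D(A)$, the classical estimate gives
\[
\|e^{-tA}u_0-e^{-sA}u_0\|_{L^1(\Gamma)}\le |Au_0|_1\,|t-s|,
\]
with $|Au_0|_1=\inf\{\|w\|_{L^1}:w\in Au_0\}$, so $u(t):=e^{-tA}u_0$ is $L^1$-Lipschitz on $[0,T]$, and in particular $u\in\mathcal C([0,T];L^1(\Gamma))$. For the second step, complete accretivity forces each resolvent $(I+\lambda A)^{-1}$ to contract every $L^p$ norm with $1\le p\le\infty$, and this transfers to the semigroup through the exponential formula. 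Combined with $\mu(\Gamma)<\infty$, which yields the continuous embeddings $L^\infty(\Gamma)\hookrightarrow L^p(\Gamma)\hookrightarrow L^1(\Gamma)$, a truncation of $u_0$ followed by a density argument lets us regard $u(t)$ as a Lipschitz curve into some reflexive $L^p(\Gamma)$ with $1<p<\infty$. Since reflexive Banach spaces enjoy the Radon--Nikod\'ym property, Lipschitz curves into them are strongly differentiable almost everywhere; this supplies the a.e.\ differentiability of $u$ in $L^p$, and hence, by the embedding, in $L^1$.

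For the third step, introduce the Yosida approximation $A_\lambda=\lambda^{-1}(I-(I+\lambda A)^{-1})$, whose semigroup $u_\lambda(t)=e^{-tA_\lambda}u_0$ solves the ODE pointwise with $u_\lambda'(t)=-A_\lambda u_\lambda(t)$. The exponential formula guarantees $u_\lambda(t)\to u(t)$ in $L^1(\Gamma)$ uniformly on $[0,T]$, and passing to the limit in the integrated identity
\[
u_\lambda(t)-u_\lambda(s)=-\int_s^t A_\lambda u_\lambda(\sigma)\,d\sigma,
\]
using the closedness of the graph of the m-accretive operator $A$ under the appropriate strong limits, identifies $-u'(t)\in Au(t)$ for a.e.\ $t\in(0,T)$. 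Together with the continuity of $u$, this yields $u\in W^{1,1}_{\mathrm{loc}}((0,T);L^1(\Gamma))$ and the strong-solution equation. The main obstacle is the second step: since $L^1(\Gamma)$ fails the Radon--Nikod\'ym property, a bare $L^1$-Lipschitz orbit need not be differentiable in $L^1$, so the regularity must be imported from a reflexive $L^p$. This import is available precisely because complete accretivity upgrades the $L^1$-contraction to $L^p$-contractions for all $p$, and $\mu(\Gamma)<\infty$ orients the embeddings the right way; remove either hypothesis and the argument collapses.
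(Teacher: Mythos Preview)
The paper offers no proof of this corollary; it appears in the appendix purely as a citation of \cite[Corollary~A.52]{AMRT}, with no argument supplied, so there is nothing in the paper to compare your attempt against. Your three-step outline (Crandall--Liggett for the Lipschitz mild solution, a.e.\ differentiability via the Radon--Nikod\'ym property in a reflexive $L^p$, then Yosida identification of the derivative) is the standard route in the B\'enilan--Crandall theory, and steps one and three are essentially correct.

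Step two, however, contains a real gap. That the resolvents and the semigroup contract every $L^p$ norm does \emph{not} upgrade an $L^1$-Lipschitz orbit to an $L^p$-Lipschitz one: the $L^1$-Lipschitz constant of $t\mapsto e^{-tA}u_0$ is $|Au_0|_1$, and an $L^p$-Lipschitz bound would require $Au_0\cap L^p\neq\emptyset$, i.e.\ that $u_0$ lie in the domain of the $L^p$-restriction $A_p:=A\cap(L^p\times L^p)$, which is not assumed. Your phrase ``a truncation of $u_0$ followed by a density argument'' does not repair this: truncations of $u_0$ need not remain in $D(A)$, and interpolating the $L^1$-Lipschitz bound against an $L^\infty$ bound on the orbit yields only $|t-s|^{1/p}$-H\"older continuity in $L^p$, not Lipschitz, so the Radon--Nikod\'ym argument is unavailable. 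What is actually needed (and this is the content of the result in \cite{AMRT}, going back to B\'enilan--Crandall) is to show that $A_p$ is itself m-accretive in $L^p$ when $\mu(\Gamma)<\infty$, obtain the strong-solution conclusion there for $u_0\in D(A_p)$ via the Radon--Nikod\'ym property, and then approximate a general $u_0\in D(A)$ by data in $D(A_p)$ with control on the section norms. That approximation is nontrivial and is precisely what your sketch elides.
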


{\bf Acknowledgements} P. M. Bern\'a was partially supported by the grants MTM-2016-76566-P (MINECO, Spain) and 20906/PI/18 from Fundaci\'on S\'eneca (Regi\'on de Murcia, Spain). 

J. D. Rossi was partially supported by CONICET grant PIP GI No 11220150100036CO (Argentina) and by UBACyT grant 20020160100155BA 
(Argentina).

This project has received funding from the European Union's Horizon 2020 research and innovation programme under the Marie Sk\l odowska-Curie grant agreement No 777822.


\end{document}